\documentclass[12pt,fleqn]{article}

\usepackage{amsmath,amssymb,amsthm}
\usepackage{geometry} 
\usepackage[pdfpagemode=UseNone,pdfstartview=FitH]{hyperref}
\usepackage{enumitem}

\usepackage{esint}

\newcommand{\bdry}[1]{\partial #1}

\newcommand{\A}{{\cal A}}
\newcommand{\D}{{\cal D}}
\newcommand{\F}{{\cal F}}

\newcommand{\dist}[2]{\text{dist}\, (#1,#2)}

\newcommand{\incl}{\hookrightarrow}

\newcommand{\M}{{\cal M}}
\newcommand{\N}{\mathbb N}
\newcommand{\norm}[2][]{\left\|#2\right\|_{#1}}

\newcommand{\pnorm}[2][]{\if #1'' \left|#2\right|_p \else \left|#2\right|_{#1} \fi}
\newcommand{\R}{\mathbb R}
\newcommand{\RP}{\R \text{P}}
\newcommand{\seq}[1]{\left(#1\right)}
\newcommand{\set}[1]{\left\{#1\right\}}

\newcommand{\w}[1]{\widetilde{#1}}
\newcommand{\weak}{\rightharpoonup}
\newcommand{\Z}{\mathbb Z}

\newcommand{\om}{\int_{\mathbb{R}^N}}

\newcommand{\dev}[1]{\int_{\mathbb{R}^N}|\nabla#1|^2dx}

\newcommand{\cri}[1]{\int_{\mathbb{R}^N}|#1|^{2^*}dx}
\newcommand{\p}{p_\alpha^\ast}

\newcommand{\e}[1]{\int_{\mathbb{R}^N}|#1|^{q_\gamma}dx}

\newcommand{\g}[1]{\int_{\mathbb{R}^N}|\nabla #1|^2dx}
\newcommand{\s}[1]{\left\{#1\right\}_{n\in\mathbb{N}}}
\newcommand{\x}{\mathcal{X}(\mathbb{R}^N)}
\newcommand{\X}{\mathcal{X}_{\text{rad}}(\R^N)}

\newcommand{\Q}{Q_{\gamma}(\R^N)}
\newcommand{\normq}[1]{\|#1\|_{Q_{\gamma}}}
\newcommand{\m}[1]{\int_{(0,N)}\int_{\R^N}\left|I_{\beta/2}\star|#1|^{q(\beta)}\right|^2dxd\mu}

\newenvironment{enumroman}{\begin{enumerate}

}{\end{enumerate}}

\newtheorem{corollary}{Corollary}[section]
\newtheorem{lemma}[corollary]{Lemma}
\newtheorem{proposition}[corollary]{Proposition}
\newtheorem{theorem}[corollary]{Theorem}

\theoremstyle{definition}
\newtheorem{definition}[corollary]{Definition}

\theoremstyle{remark}
\newtheorem{example}[corollary]{Example}
\newtheorem{remark}[corollary]{Remark}

\numberwithin{equation}{section}

\title{\bf Solutions to critical equations with a superposition of nonlocal Hartree--type nonlinearities \thanks{
2020 Mathematics Subject Classification: Primary 35J60; Secondary 58E05, 35J20, 35Q55, 35B06, 47J30.

Key Words and Phrases: Scaled equations, scaled eigenvalue problems, Hartree-type nonlinearities, superposition of nonlinearities, cohomological index.}}

\author{
\bf Artur Jorge Marinho\\
Department of Mathematics\\
Florida Institute of Technology\\
150 W University Blvd, Melbourne, FL 32901-6975, USA\\
\em amarinho2024@my.fit.edu\\
[\medskipamount]
\bf Kanishka Perera\\
Department of Mathematics\\
Florida Institute of Technology\\
150 W University Blvd, Melbourne, FL 32901-6975, USA\\
\em kperera@fit.edu\\
}
\date{}
\begin{document}
\maketitle

\begin{abstract}
    We study a class of nonlinear nonlocal elliptic equations in $\R^N$ involving superpositions of Hartree-type nonlinearities. Motivated by the Schr\"odinger--Poisson--Slater system, these equations arise as natural generalizations of problems with a single nonlocal interaction term. More precisely, we consider equations driven by a family of Riesz potentials weighted by a positive Borel measure, which gives rise to a superposed nonlocal operator.
To treat this problem variationally, we introduce suitable functional settings, namely the superposed Coulomb space and the associated superposed Coulomb--Sobolev space, and study their main properties. Combining variational methods with a recently developed scaling-based critical point theory, we prove existence and multiplicity results for radial solutions. We also investigate a Brezis--Nirenberg-type problem and obtain multiplicity results near eigenvalues of an associated nonlinear eigenvalue problem.
Our results extend previous works on single Hartree-type equations and provide a unified framework for treating superpositions of nonlocal interactions of Hartree type.
\end{abstract}

\newpage

{\small \tableofcontents}

\newpage

\section{Introduction}

Schr\"odinger--Poisson--Slater equations of the form
\begin{eqnarray}\label{intro1}
    -\Delta u+V(x)u+\seq{\frac{1}{4\pi|x|}\star u^2}u=f(x,u),
    \quad\text{in }\,\R^3,
\end{eqnarray}
have been extensively studied in recent years. They arise as stationary versions of nonlinear Schr\"odinger equations of the type
\begin{eqnarray}\label{intro2}
    i\partial_t\Psi=-\Delta\Psi+V(x)\Psi
    +\seq{\frac{1}{4\pi|x|}\star|\Psi|^2}\Psi
    -f(x,\Psi),
    \quad\text{in }\, \R^3\times\R.
\end{eqnarray}
Equation \eqref{intro2} can be viewed as an approximation of the Hartree--Fock model, which appears in the physical description of many-body systems of electrons. In this setting, the term $u^2$ in \eqref{intro1} represents the electronic density of the system, the nonlocal convolution term describes the Coulomb interaction among the electrons, and $V$ denotes an external potential.

For a concise introduction to the mathematical aspects of the Hartree--Fock model, see \cite{MR2149087}. We also refer the reader to the works of Lieb \cite{MR629207,MR641371}, Le Bris and Lions \cite{MR2149087}, Lu and Otto \cite{MR3251907}, Frank et al.\ \cite{MR3762278}, and the references therein. The nonlinear term $f(x,u)=|u|^{8/3}u$ was introduced by Slater \cite{PhysRev.81.385} as an approximation of the exchange potential in the Hartree--Fock model; see also \cite{doi:10.1142/S0218202503002969, MAUSER2001759, doi:10.1142/S0218202599000439}. The zero-mass variant of equation \eqref{intro1},
\begin{equation}\label{intro3}
    -\Delta u+\left(\frac{1}{4\pi|x|}\star u^2\right)u=f(x,u),
    \quad\text{in }\R^3,
\end{equation}
has also been studied in recent years. When $f(x,u)=|u|^{q-2}u$ with $q\in(2,3)$, equation \eqref{intro3} can be viewed as a formal limiting problem associated with
\[
-\Delta u+u+\lambda\left(\frac{1}{4\pi|x|}\star u^2\right)u
=|u|^{q-2}u,\quad\text{in }\R^3,
\]
as $\lambda\to0$ \cite{MR2679375}. 

The zero-mass problem with power-type nonlinearity $f(x,u)=|u|^{q-2}u$ was studied by Ruiz \cite{MR2679375}, who developed in detail the variational framework associated with the equation. In particular, he proved that for every $q\in(18/7,3)$, equation \eqref{intro3} admits a nontrivial radial solution. Ianni and Ruiz \cite{MR2902293} later showed the existence of a positive solution for any $q\in(3,6)$. 

Motivated by the scaling properties of problem \eqref{intro3}, Mercuri and Perera \cite{MePe1} introduced a new method in critical point theory, which they used to prove new existence and multiplicity results for \eqref{intro3}. In particular, they generalized the classical saddle point theorem and linking theorem of Rabinowitz to this scaled variational structure; see \cite[Theorems 2.23 and 2.24]{MePe1} and also \cite[Theorems 2.11 and 2.12]{MR1400007}. A key ingredient in this method is the analysis of an unbounded sequence
\[
0<\lambda_1\leq \lambda_2\leq \cdots \leq \lambda_k\leq \cdots \to +\infty
\]
of eigenvalues for the associated nonlinear eigenvalue problem
\[
-\Delta u+\left(\frac{1}{4\pi|x|}\star u^2\right)u
=\lambda |u|u,\quad\text{in }\R^3,
\]
which was constructed by means of the $\Z_2$-cohomological index of Fadell and Rabinowitz \cite{MR0478189}; see \cite[Theorem 2.10]{MePe1}.

This framework was further generalized by Mercuri et al. \cite{MR3568051}, who studied equations of the form
\begin{equation}\label{intro4}
    -\Delta u+\left(I_\alpha\star|u|^p\right)|u|^{p-2}u
    =|u|^{q-2}u,\quad\text{in }\,\R^N,
\end{equation}
where $N\geq 1$, $p,q>1$, and $I_\alpha=A_\alpha |x|^{\alpha-N}$ is the Riesz potential of order $\alpha\in(0,N)$. They introduced and studied the {\sl Coulomb--Sobolev} space, namely the natural energy space associated with \eqref{intro4}, and proved the existence of nonnegative solutions to \eqref{intro4} for a range of values of $q>1$; see \cite[Theorems 3 and 5]{MR3568051}.

Using the variational structure developed in \cite{MR3568051}, Marinho, Mercuri and Perera \cite{MaMePe1} recently applied the scaled variational method of Mercuri and Perera to equations of the form
\begin{equation}\label{intro5}
    -\Delta u+\left(I_\alpha\star|u|^{p}\right)|u|^{p-2}u
    =f(|x|,u),\quad\text{in }\, \R^N,
\end{equation}
where the Riesz potential has order $\alpha\in(1,N)$ and $f$ is a Carath\'eodory function satisfying suitable growth assumptions. The nonlocal term
\[
\left(I_\alpha\star|u|^{p}\right)|u|^{p-2}u
\]
is usually called a {\sl Hartree-type} nonlinearity. Equation \eqref{intro5} exhibits a rich variational structure due to its dependence on the parameters $N$, $p$, and $\alpha$. In particular, new phenomena arise when $p$ is sufficiently large, making the problem more difficult to treat. We refer the reader to \cite{MaMePe1} for a more detailed discussion of these issues.

Due to the greater flexibility in the parameters appearing in equation \eqref{intro5}, it is natural to ask whether one can treat problems involving more than one Hartree-type nonlinearity. For instance, one may consider equations of the form
\[
-\Delta u+\left(I_\alpha\star|u|^{p}\right)|u|^{p-2}u
+\left(I_\beta\star|u|^{q}\right)|u|^{q-2}u
=f(|x|,u),\quad\text{in }\,\R^N,
\]
where $\alpha,\beta\in(0,N)$ and $p,q>1$. More generally, one may ask what new difficulties arise in the study of problems involving a countable superposition of Hartree-type nonlinearities, such as
\[
-\Delta u+\sum_{i=1}^\infty c_i
\left(I_{\beta_i}\star|u|^{p_i}\right)|u|^{p_i-2}u
=f(|x|,u),\quad\text{in }\,\R^N,
\]
where $c_i>0$, $p_i>1$, and $\beta_i\in(0,N)$ for all $i\geq1$. At an even higher level of generality, one may consider problems involving possibly uncountable superpositions of Hartree-type nonlinearities. The present paper gives a partial answer to these questions by developing a framework suitable for treating some types of superpositions.

 Let $a<b$ with $[a,b]\subset(0,N)$, and let $\mu$ be a finite positive Borel measure on $(0,N)$ such that
\[
\mu\big((0,N)\setminus [a,b]\big)=0.
\]
Let also $\gamma>\frac{N-2}{2}$ be fixed. In this paper, we prove existence and multiplicity results for the nonlinear and nonlocal problem
\begin{eqnarray}\label{superposition problem}
    -\Delta u+\int_{(0,N)}
    \left(I_{\beta}\star|u|^{q(\beta)}\right)
    |u|^{q(\beta)-2}u\,d\mu(\beta)
    =
    \lambda |u|^{q_\gamma-2}u
    +\eta |u|^{r-2}u
    +|u|^{2^*-2}u
\end{eqnarray}
in $\R^N$, where $\lambda\in\R$, $\eta\geq0$, and $r\in[q_\gamma,2^*)$. Here
\[
q(\beta)=1+\dfrac{\beta+2}{2\gamma},
\qquad
q_\gamma=2+\frac{2}{\gamma},
\qquad
2^*=\dfrac{2N}{N-2},
\]
with $\beta\in(0,N)$ and $N\geq3$.

The function $I_\beta:\R^N\setminus\{0\}\to\R$ denotes the Riesz potential of order $\beta\in(0,N)$, defined by
\[
I_\beta(x)=\dfrac{A_\beta}{|x|^{N-\beta}},
\qquad
A_\beta=\dfrac{\Gamma\left(\frac{N-\beta}{2}\right)}
{\Gamma\left(\frac{\beta}{2}\right)\pi^{N/2}2^\beta}.
\]
The choice of the constant $A_\beta$ ensures that the kernels $I_\beta$ satisfy the semigroup property
\[
I_{\beta_1}\star I_{\beta_2}=I_{\beta_1+\beta_2}
\]
whenever $\beta_1,\beta_2\in(0,N)$ and $\beta_1+\beta_2<N$; see, for instance, \cite[pp. 73--74]{du1970introduction}. Consequently, the energy associated with the Hartree-type term in \eqref{superposition problem} can be treated as an $L^2$-type quantity, since
\[
\iint_{\R^N\times\R^N}
\dfrac{A_\beta |u(x)|^{q(\beta)}|u(y)|^{q(\beta)}}
{|x-y|^{N-\beta}}\,dx\,dy
=
\int_{\R^N}
\left|I_{\beta/2}\star |u|^{q(\beta)}\right|^2\,dx
\]
for every $\beta\in(0,N)$.

Problem \eqref{superposition problem} contains several previously studied models as particular cases. For instance, when the measure $\mu$ is a Dirac mass, the nonlocal term reduces to a single Hartree-type nonlinearity. In particular, suitable choices of the parameters recover equations of the form \eqref{intro3} and \eqref{intro5}, after choosing the corresponding right-hand side.

The choice of the powers $q(\beta)$ is dictated by the scaling properties of the problem and allows us to employ the scaling-based approach in critical point theory recently developed by Mercuri and Perera in \cite{MePe1}. A central ingredient is the analysis of the nonlinear eigenvalue-type problem associated with \eqref{superposition problem}, namely
\begin{eqnarray}\label{e1}
    -\Delta u+\int_{(0,N)}
    \left(I_{\beta}\star|u|^{q(\beta)}\right)
    |u|^{q(\beta)-2}u\,d\mu(\beta)
    =
    \lambda |u|^{q_\gamma-2}u,
    \quad\text{in }\R^N.
\end{eqnarray}
This equation can be interpreted as a nonlinear eigenvalue problem through its scaling invariance. It therefore belongs to the broader class of scaled eigenvalue problems studied in \cite{MePe1}, where a suitable unbounded sequence of eigenvalues $\lambda_k\nearrow+\infty$ was constructed by means of the $\Z_2$-cohomological index of Fadell and Rabinowitz \cite{MR0478189}. 

A related unbounded sequence of eigenvalues can also be defined using the Krasnosel'skii genus. However, the $\Z_2$-cohomological index satisfies the piercing property, whereas the genus does not. This property plays a crucial role in the construction of suitable minimax levels, which in turn yield critical values for the functional associated with problem \eqref{superposition problem}.

Since the constant $A_\beta$ blows up as $\beta$ approaches the boundary of the interval $(0,N)$, it is necessary to require the measure $\mu$ to vanish near this boundary. More precisely, we assume that $\mu$ is a finite positive Borel measure on $(0,N)$ such that
\[
\mu(E)=0
\]
for every measurable set $E\subset (0,N)\setminus [a,b]$, where $[a,b]\subset (0,N)$ is a fixed closed interval.

Since problem \eqref{superposition problem} is treated by variational methods, a natural question is to identify the appropriate energy space for this setting. The energy functional associated with \eqref{superposition problem} is given by
\begin{eqnarray*}
    \Phi(u)
    &=&
    \dfrac{1}{2}\int_{\R^N}|\nabla u|^2\,dx
    +\int_{(0,N)}
    \dfrac{1}{2q(\beta)}
    \int_{\R^N}
    \left|I_{\beta/2}\star |u|^{q(\beta)}\right|^2\,dx\,d\mu(\beta)
    \\[0.2cm]
    &&-
    \dfrac{\lambda}{q_\gamma}\int_{\R^N}|u|^{q_{\gamma}}\,dx
    -\dfrac{\eta}{r}\int_{\R^N}|u|^r\,dx
    -\dfrac{1}{2^*}\int_{\R^N}|u|^{2^*}\,dx .
\end{eqnarray*}
To define the underlying space that best fits this framework, we proceed as follows. We first introduce the {\sl superposed Coulomb space} $Q_\gamma(\R^N)$ as the set of all measurable functions $u:\R^N\to\R$ for which there exists $\lambda=\lambda(u)>0$ such that
\[
\m{\lambda u}<+\infty.
\]
On $Q_\gamma(\R^N)$, we define a Luxemburg-type norm by
\[
\|u\|_{Q_\gamma}:=
\inf\set{\lambda>0:\m{u/\lambda}\leq1}.
\]
We then study the properties of this normed space.

After that, we define the {\sl superposed Coulomb--Sobolev space} $\x$ by
\[
\x:=\mathcal{D}^{1,2}(\R^N)\cap Q_{\gamma}(\R^N),
\]
endowed with the norm
\[
\|u\|_{\x}:=\|\nabla u\|_{L^2(\R^N)}+\|u\|_{Q_\gamma}.
\]
The properties of this space are then analyzed.

In the particular case where $N=3$ and $\mu=\delta_2$, this type of space was studied in \cite{MR2679375}. In the more general case where $N\geq3$ and $\mu=\delta_\alpha$ for some $\alpha\in(0,N)$, the corresponding Coulomb--Sobolev space was studied in \cite{MR3568051}. For instance, it was shown in \cite{MR3568051} that $\x$ is a uniformly convex Banach space. Moreover, the authors identified ranges of powers $r\geq1$ for which the embedding
\[
\x\hookrightarrow L^r(\R^N)
\]
is continuous, as well as ranges for which this embedding is compact; see \cite[Theorems 1, 4 and 5]{MR3568051}.

For a general positive Borel measure $\mu$, it is reasonable to expect that the embedding properties of $\x$ depend on the behavior of $\mu$. We say that $\alpha\in(0,N)$ has the positive neighborhood property with respect to $\mu$ if
\[
\mu(I)>0
\]
for every open interval $I\subset(0,N)$ containing $\alpha$. We denote by $\mathcal{P}_\mu$ the set of all points $\alpha\in (0,N)$ which have the positive neighborhood property with respect to $\mu$. We will see that this set will play an important role in the structure of the underlying space.  More precisely, following the ideas developed in \cite{MR3568051}, we obtain the following result.

\begin{theorem}\label{th0}
      Let $N\geq3$, and assume that $\mathcal{P}_\mu\cap(1,N)\neq\emptyset$. Let $\alpha\in \mathcal{P}_\mu\cap(1,N)$. If
    \begin{enumroman}
        \item either $q(\alpha)<\dfrac{N+\alpha}{N-2}$ and $q_{\text{rad}}^\alpha<r\leq2^*$,
        \item  or $q(\alpha)>\dfrac{N+\alpha}{N-2}$, and $2^*\leq r<q_{\text{rad}}^\alpha$
        \end{enumroman}
       where $q_{\text{rad}}^\alpha=\frac{2(2q(\alpha)(N-1)+N-\alpha)}{3N+\alpha-4}$, then  $\X\hookrightarrow L^r(\R^N)$ continuously. Moreover, if all inequalities are strict, then the embedding is compact.
\end{theorem}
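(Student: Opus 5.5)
The plan is to reduce the statement to the single-kernel case treated in \cite[Theorems 4 and 5]{MR3568051}. The key is a radial pointwise decay estimate for functions in $\X$ that depends only on the Coulomb energy at exponents near $\alpha$. The positive neighborhood property lets us pick, for any $\varepsilon>0$, a set $J=(\alpha-\varepsilon,\alpha+\varepsilon)\cap[a,b]$ with $\mu(J)>0$.

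\textbf{Step 1: averaged Coulomb control.} For $u\in\X$ with $\|u\|_{\x}\le 1$ we have
\[
\int_J\int_{\R^N}\left|I_{\beta/2}\star|u|^{q(\beta)}\right|^2dx\,d\mu(\beta)\le 1 .
\]
This holds because the Luxemburg unit ball is characterized by the modular being at most $1$. Since $\mu(J)>0$, Chebyshev's inequality gives a set $J'\subset J$ of positive measure on which the single Coulomb energy $D_\beta(u):=\int|I_{\beta/2}\star|u|^{q(\beta)}|^2\le 2/\mu(J)$. In general we get $D_\beta(u)\le C_\varepsilon \|u\|_{Q_\gamma}^{2q(\beta)}$ for such $\beta$ (or a power-type bound when the norm exceeds $1$).

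\textbf{Step 2: radial decay and interpolation.} For a radial $u$ and each such $\beta\in J'$, I will use the Ruiz-type estimate from \cite{MR3568051}. It bounds $|u(x)|$ for $|x|\ge R$ in terms of $\|\nabla u\|_2$ and $D_\beta(u)$, with a decay rate determined by $q(\beta),\beta,N$. Interpolating this with $\nabla u\in L^2$ gives $\|u\|_{L^r}\le C(\|\nabla u\|_2+D_\beta(u)^{1/(2q(\beta))})^{\theta}$ for $r$ between $2^*$ and $q_{\rm rad}^\beta$, on the side dictated by the sign of $q(\beta)-\frac{N+\beta}{N-2}$. This is exactly \cite[Theorem 4]{MR3568051} applied with exponent $\beta$.

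The main obstacle is that $\beta$ varies with $u$, so a uniform constant is needed. I handle this by continuity. The maps $\beta\mapsto q(\beta)$, $\beta\mapsto q_{\rm rad}^\beta$, $A_\beta$, and the constants in the radial lemma are continuous on $[a,b]$. Under hypothesis (i) or (ii), the strict inequalities $q(\alpha)\lessgtr\frac{N+\alpha}{N-2}$ and $r\gtrless q_{\rm rad}^\alpha$ persist for $\beta\in J$ once $\varepsilon$ is small. Hence the constants are uniform over $\beta\in J$, and scaling gives $\|u\|_r\le C\|u\|_{\x}$.

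At the endpoint $r=q_{\rm rad}^\alpha$, the strict inequality may fail for nearby $\beta$. Here I would instead choose $J$ one-sided, using that $\mu$ of at least one of $(\alpha-\varepsilon,\alpha]$ and $[\alpha,\alpha+\varepsilon)$ is positive. If that fails, I would rely on the monotonicity of $q_{\rm rad}^\beta$ in $\beta$. I expect this endpoint to need care, while $r=2^*$ follows trivially from Sobolev.

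\textbf{Step 3: compactness when all inequalities are strict.} Take a bounded sequence $(u_n)$ in $\X$. Passing to a subsequence, it converges weakly in $\mathcal D^{1,2}$ and a.e., by Rellich on balls. The uniform decay from Step 2 gives $\sup_n\int_{|x|>R}|u_n|^r\to0$ as $R\to\infty$, because $r$ lies strictly inside the admissible range. The same strictness gives uniform integrability near the origin and at small values, via interpolation between two admissible exponents $r_1<r<r_2$. Vitali's theorem then yields $u_n\to u$ in $L^r$, which proves compactness.
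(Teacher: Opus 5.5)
Your proposal is correct in outline. The continuous-embedding part is essentially the paper's argument, but your compactness part takes a different route.

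\textbf{Continuity.} The paper fixes an open interval $I\ni\alpha$ on which the strict inequalities of (i) or (ii) persist, so $\mu(I)>0$. It applies the single-kernel estimate of \cite{MR3568051} for every $\beta\in I$, with a constant it takes to be uniform in $\beta$. It then integrates over $I$ to get $\|u\|_{L^r}\le C(\|\nabla u\|_{L^2}+\rho(u)+1)$ and rescales through the Luxemburg norm. Your Chebyshev selection of one good $\beta\in J$ for each normalized $u$ is an equivalent way to do this. It relies on the same uniformity of constants near $\alpha$.

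\textbf{Compactness.} The paper never needs estimates that are uniform in $\beta$. It sets $A_k=\{\beta:\liminf_n D_\beta(u_n)\ge k\}$, and Fatou's lemma gives $\mu(A_k)\to0$. For $k$ large there is therefore a single $\beta\in I\setminus A_k$ along which a subsequence is bounded in $E^{\beta,q(\beta)}_{\mathrm{rad}}(\R^N)$. The compact radial embedding of \cite{MR3568051} is then applied as a black box at the $\beta$-independent exponent $q_\gamma=2\frac{2q(\beta)+\beta}{2+\beta}$, and the other exponents follow by interpolation.

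Your Vitali argument lets $\beta_n$ vary with $n$. It therefore needs the radial decay constants to be uniform over $J$, which you assert but do not check. This is avoidable within your own framework. Once you have the continuous embeddings at exponents $r_1<r<r_2$ in the admissible range, the Strauss bound $|u(x)|\le C|x|^{-(N-2)/2}\|\nabla u\|_{L^2}$ for radial $\mathcal D^{1,2}$ functions gives the tail estimate $\sup_n\int_{|x|>R}|u_n|^r\le C R^{-(N-2)(r-r_1)/2}$. The $L^{r_2}$ bound gives uniform integrability on balls, so Vitali applies with no $\beta$-uniform decay at all.

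The paper's route is shorter and uses only the single-kernel compactness theorem. Yours is more self-contained and handles each admissible $r$ directly.

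\textbf{Endpoint.} Your endpoint discussion is unnecessary. The value $r=q_{\mathrm{rad}}^\alpha$ is excluded in both (i) and (ii), and the only endpoint that actually occurs, $r=2^*$, is just the Sobolev inequality.
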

\begin{remark}
   We have that $q_\gamma=2+2/\gamma$ lies in the interval
$(q_{\mathrm{rad}}^\alpha,2^*)$. Hence, by Theorem~\ref{th0}, the embedding
\[
\X\hookrightarrow L^{q_\gamma}(\R^N)
\]
is compact.

The assumption $\alpha>1$ is essential, as shown in
\cite[Theorem 5]{MR3568051}. Indeed, if $\alpha\le1$, then $\X$ embeds
continuously into $L^r(\R^N)$ for all
$r\in[q_\gamma,2^*]$ whenever
$q(\alpha)\le (N+\alpha)/(N-2)$, and for all
$r\in[2^*,q_\gamma]$ whenever
$q(\alpha)>(N+\alpha)/(N-2)$. However, the embedding
\[
\X\hookrightarrow L^{q_\gamma}(\R^N)
\]
fails to be compact. Since this compactness property plays a crucial role
in the present work, we require the existence of at least one
$\alpha\in(1,N)$ satisfying the positive neighborhood property.
\end{remark}

As pointed out above, compactness is essential in order to employ the machinery developed in \cite{MePe1}. For this reason, we work in the radial subspace $\X\subset\x$ and seek radial solutions of problem \eqref{superposition problem}. Our main results can now be stated.

\begin{theorem}\label{th1}
    Let $\gamma>\frac{N-2}{2}$ with
    $2(\gamma-1)\leq a<b\leq2(\gamma+1)$, and suppose that
    $q(b)<\frac{N}{N-2}$. Assume also that $\mathcal{P}_\mu\cap(1,N)\neq0$. Assume moreover that either
    $b\leq\frac{N-2}{2}$, or
    \[
    b>\frac{N-2}{2}
    \quad\text{and}\quad
    \gamma<\frac{(N-2)(a+2)}{2(2b+2-N)}.
    \]
    Then, for any $\lambda\in\R$ and $m\geq1$, there exists
    $\eta_m=\eta_m(\lambda)>0$ such that equation
    \eqref{superposition problem} has $m$ distinct pairs of nontrivial
    solutions at positive energy levels for all $\eta>\eta_m$.
    In particular, the number of solutions tends to infinity as
    $\eta\to\infty$.
\end{theorem}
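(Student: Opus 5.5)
Following \cite{MePe1,MaMePe1}, the plan is to place $\Phi$, restricted to $\X$, inside the scaled critical point framework and then apply the abstract multiplicity theorem for symmetric scaled functionals.

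\textbf{Scaling.} Use the dilation $u_t(x)=t^{\gamma}u(tx)$. A direct computation gives
\[
\int_{\R^N}|\nabla u_t|^2\,dx=t^{2\gamma+2-N}\int_{\R^N}|\nabla u|^2\,dx,
\qquad
\int_{\R^N}|u_t|^{q_\gamma}\,dx=t^{2\gamma+2-N}\int_{\R^N}|u|^{q_\gamma}\,dx .
\]
Because $q(\beta)=1+(\beta+2)/(2\gamma)$, the Hartree energy of order $\beta$ scales as
\[
t^{2\gamma q(\beta)-N-\beta}=t^{2\gamma+2-N}
\]
for every $\beta$. So $\int|\nabla u|^2+\int_{(0,N)}\frac{1}{q(\beta)}\int|I_{\beta/2}\star|u|^{q(\beta)}|^2\,d\mu$ and $\int|u|^{q_\gamma}$ are homogeneous of the same degree $s=2\gamma+2-N>0$ (since $\gamma>\frac{N-2}{2}$). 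The terms $\int|u|^r$ and $\int|u|^{2^*}$ scale with exponents $\gamma r-N$ and $\gamma 2^*-N$, which are larger than $s$ because $r\ge q_\gamma$.

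\textbf{Eigenvalues and compactness.} Define $I(u)$ as the gradient term plus the Hartree term, and $J(u)=\int|u|^{q_\gamma}$. The compact embedding $\X\hookrightarrow L^{q_\gamma}$ from Theorem~\ref{th0} and the remark after it, combined with \cite[Theorem 2.10]{MePe1}, gives an unbounded sequence $\lambda_k\nearrow\infty$ of scaled eigenvalues of \eqref{e1}, defined through the cohomological index. I expect the following checks to be routine, using Fatou and the uniform convexity/Luxemburg properties of $Q_\gamma$:
\begin{itemize}
\item $\X$ is a reflexive Banach space;
\item $I$ is coercive on the scaled manifold;
\item $I'$ is of type $(S)$;
\item $J'$ is compact.
\end{itemize}
Dominated convergence in $\beta$ is justified because $\mu$ is carried by $[a,b]$, where $A_\beta$ and $q(\beta)$ are bounded.

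\textbf{Main obstacle.} The key step is the Palais--Smale condition below a threshold. Let $c=\frac{1}{N}S^{N/2}$. I want $\Phi$ to satisfy (PS)$_{\text{c}}$ for all levels below $c$, and also below the scaled version of this level. First, bounded (PS) sequences come from the identity $\Phi(u_n)-\frac{1}{\sigma}\Phi'(u_n)u_n$ with a suitable combination that matches the exponents $2q(\beta)$ against $q_\gamma$ and $r$. This is where the hypotheses enter. The bounds $2(\gamma-1)\le a<b\le 2(\gamma+1)$ give $q(\beta)\in[2,2+2/\gamma]$, that is, $2q(\beta)\le 2q_\gamma\le\dots$. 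Next, $q(b)<N/(N-2)$ keeps the Hartree term subcritical relative to $\int|u|^{2^*}$ through the Hardy--Littlewood--Sobolev inequality. Finally, the condition involving $b>\frac{N-2}{2}$ and the bound on $\gamma$ controls the Hartree term by $\|\nabla u\|_2$ and $L^{q_\gamma}$ interpolation, so the Pohozaev/scaling derivative argument closes. After boundedness, radial compactness lets me pass to the limit in the subcritical terms. A Brezis--Lieb splitting then shows that any loss of compactness costs at least $c$.

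\textbf{Minimax and large $\eta$.} Fix $\lambda$ and choose $k$ with $\lambda<\lambda_{k+1}$. On the scaled cone $\{I\ge\lambda_{k+1}J\}$, which has index $\ge k$ complement by the piercing property, $\Phi$ is positive on small spheres of the scaled norm. Next take a compact symmetric set $A_0$ of index $k+m$ in the unit sphere, made of finitely many radial smooth functions. On the cone over $A_0$, $\Phi\to-\infty$ along scalings, and
\[
\sup\Phi\le \sup_{t>0}\Big(Ct^{s}-\frac{\eta}{r}\,t^{\gamma r-N}\min_{A_0}|u|_r^r\Big)\to0 \quad\text{as } \eta\to\infty .
\]
So for $\eta>\eta_m$ all $m$ minimax levels lie in $(0,c)$, where (PS) holds. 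The abstract symmetric linking theorem \cite[Theorem 2.23--2.24 type, multiplicity version]{MePe1} then yields $m$ distinct pairs of critical points at positive levels.
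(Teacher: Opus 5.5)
Your architecture matches the paper's: the dilation $u_t=t^\gamma u(t\cdot)$ with $s=2\gamma+2-N$, cohomological-index eigenvalues of \eqref{e1}, (PS) below $\frac1N S^{N/2}$, and Theorem \ref{m1} with $\eta$ large. Your $A_0$ (the sphere of a $(k+m)$-dimensional radial subspace, projected onto $\M_s$) is fine here. The bound $i(\M_s\setminus\w{\Psi}_{\lambda_{k+1}})\le k$ comes from Theorem \ref{Theorem 301}, not from piercing. But the step you flag as the main obstacle has a genuine gap, in two places.

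\textbf{Boundedness of (PS) sequences.} This cannot come from $\Phi(u_n)-\frac1\sigma\Phi'(u_n)u_n$ for any $\sigma$. Since $2q(\beta)=2+(\beta+2)/\gamma>q_\gamma$, a nonnegative Hartree coefficient forces $\sigma\ge 2q(\beta)>q_\gamma$ for $\mu$-a.e.\ $\beta$. For $\lambda>0$ the term $-\lambda(\frac1{q_\gamma}-\frac1\sigma)\int|u_n|^{q_\gamma}$ is then negative and scales like $t^s$, exactly as the positive terms do. The $\eta$-term behaves the same way when $r<\sigma$. A Pohozaev/scaling identity is also unavailable for a (PS) sequence: the $u_n$ are not solutions, and $x\cdot\nabla u_n$ is not an admissible test function.

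The paper argues by rescaling instead. If $\|u_n\|\to\infty$, set $\w{u}_n=(u_n)_{t_n}\in\M_s$ with $\w{t}_n=I_s(u_n)^{1/s}\to\infty$. Dividing $\Phi'(u_n)u_n=o(\w{t}_n^{s/2})$ by $\w{t}_n^s$ gives $\int|\w{u}_n|^{2^*}=O(\w{t}_n^{-2s/(N-2)})$. Interpolation with the bounded $L^l$ norms, $l\in(q^\alpha_{\text{rad}},q_\gamma)$, then kills $\int|\w{u}_n|^{q_\gamma}$ and $\w{t}_n^{\gamma(r-q_\gamma)}\int|\w{u}_n|^r$. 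Only after this does $\sigma=2^*$ work: dividing $2^*\Phi(u_n)-\Phi'(u_n)u_n$ by $\w{t}_n^s$ yields
\[
\Big(\frac{2^*}{2}-1\Big)\int_{\R^N}|\nabla\w{u}_n|^2\,dx+\int_{(0,N)}\Big(\frac{2^*}{2q(\beta)}-1\Big)\int_{\R^N}|I_{\beta/2}\star|\w{u}_n|^{q(\beta)}|^2\,dx\,d\mu=o(1).
\]
Because $q(b)<\frac{N}{N-2}$, both coefficients are positive, which contradicts $I_s(\w{u}_n)=1$.

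\textbf{The splitting step.} If compactness fails, Brezis--Lieb gives $c\ge\frac1N S^{N/2}+\Phi(u)$ for the weak limit $u$, which is a critical point. So you need $\Phi(u)\ge0$. Unlike Brezis--Nirenberg, this does not follow from $\Phi'(u)u=0$, for the same homogeneity reason as above. The paper combines the Nehari identity with the Pohozaev identity of Lemma \ref{pohozaev sup}, using coefficients $\frac1N+\frac{N-2}{2s}$ and $-\frac1s$ (equivalently $\frac{d}{dt}\Phi(u_t)|_{t=1}=0$), to get
\[
\Phi(u)=\frac{\eta\gamma(r-q_\gamma)}{rs}\int_{\R^N}|u|^r\,dx+\frac1N\int_{\R^N}|u|^{2^*}\,dx\ge0 .
\]
Pohozaev requires $u\in W^{2,\sigma}_{\text{loc}}$. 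The paper gets this by showing $I_\beta\star|u|^{q(\beta)}$ is locally bounded uniformly in $\beta\in[a,b]$, with the tail controlled by H\"older against $\|u\|_{L^{2^*}}$. This is exactly where the hypothesis ``$b\le\frac{N-2}{2}$, or $\gamma<\frac{(N-2)(a+2)}{2(2b+2-N)}$'' enters; it is not an interpolation bound on the Hartree term, as you suggest.

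The same regularity-plus-Pohozaev argument (Lemmas \ref{eigen-regularity} and \ref{Pohosaev eigenvalue}) is also what verifies $(H_{12})$, i.e.\ $I_s(u)=\lambda J_s(u)$ for eigenfunctions. Your checklist omits this, yet the eigenvalue theory you invoke (Theorem \ref{Theorem 301}) assumes it.
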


\begin{corollary}
    Let the assumptions of Theorem \ref{th1} hold, and let
    $\{c_i\}_{i\in\N}$ be a sequence of nonnegative numbers, not all zero,
    such that
    \[
    \sum_{i=1}^\infty c_i<\infty.
    \]
    Let also $\{\beta_i\}_{i\in\N}\subset [a,b]\subset(0,N)$. Suppose that for some $j\geq1$, one has $c_j>0$ and $\beta_j\in(1,N)$. Then, for any
    $\lambda\in\R$ and $m\geq1$, there exists
    $\eta_m=\eta_m(\lambda)>0$ such that the equation
    \[
    -\Delta u+
    \sum_{i=1}^\infty c_i
    \left(I_{\beta_i}\star |u|^{q(\beta_i)}\right)
    |u|^{q(\beta_i)-2}u
    =
    \lambda |u|^{q_\gamma-2}u
    +\eta |u|^{r-2}u
    +|u|^{2^*-2}u,
    \quad\text{in }\,\R^N,
    \]
    has $m$ distinct pairs of nontrivial solutions at positive energy levels
    for all $\eta>\eta_m$. In particular, the number of solutions tends to
    infinity as $\eta\to\infty$.
\end{corollary}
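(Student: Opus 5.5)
The plan is to realise the series as a special case of \eqref{superposition problem} and then invoke Theorem \ref{th1}. Define the measure
\[
\mu:=\sum_{i=1}^\infty c_i\,\delta_{\beta_i}
\]
on the Borel sets of $(0,N)$. Because $c_i\ge0$ and $\sum_i c_i<\infty$, $\mu$ is a finite positive Borel measure with $\mu((0,N))=\sum_i c_i$. It is nonzero, since not all $c_i$ vanish. Since every $\beta_i\in[a,b]$, we have $\mu((0,N)\setminus[a,b])=0$, so $\mu$ is an admissible measure in the sense fixed before \eqref{superposition problem}.

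Next we identify the equation. For any measurable $g\ge0$ (or $\mu$-integrable $g$) on $(0,N)$, countable additivity and monotone convergence give $\int_{(0,N)} g\,d\mu=\sum_i c_i g(\beta_i)$. Applying this pointwise in $x$ with $g(\beta)=(I_\beta\star|u|^{q(\beta)})|u|^{q(\beta)-2}u$, split into positive and negative parts, turns the nonlocal term of \eqref{superposition problem} into the series in the corollary. The same identity applied to the energy shows that the term $\int\frac1{2q(\beta)}\int|I_{\beta/2}\star|u|^{q(\beta)}|^2dx\,d\mu$ equals $\sum_i \frac{c_i}{2q(\beta_i)}\int|I_{\beta_i/2}\star|u|^{q(\beta_i)}|^2dx$. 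Hence the functional $\Phi$ and the space $Q_\gamma(\R^N)$ built from $\mu$ are exactly those of the series problem, and the solutions produced by Theorem \ref{th1} are (weak) solutions of the series equation.

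It remains to check $\mathcal{P}_\mu\cap(1,N)\neq\emptyset$. Take $j$ with $c_j>0$ and $\beta_j\in(1,N)$. Any open interval $I\ni\beta_j$ satisfies $\mu(I)\ge c_j\mu\text{-}$mass at $\beta_j$, that is $\mu(I)\ge c_j\delta_{\beta_j}(I)=c_j>0$. So $\beta_j\in\mathcal{P}_\mu\cap(1,N)$. All other hypotheses of Theorem \ref{th1} concern only $\gamma,a,b,N$ and are assumed. Theorem \ref{th1} then yields, for each $\lambda$ and $m$, a threshold $\eta_m(\lambda)$ beyond which there are $m$ distinct pairs of nontrivial solutions at positive energy.

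The only point that needs any care is the interchange of sum and integral in the weak formulation when testing against $v\in\X$. I would handle it by dominated convergence. The estimate $|\sum_i c_i\int (I_{\beta_i}\star|u|^{q_i})|u|^{q_i-2}uv|\le\sum_i c_i\cdots$ is controlled by Hölder together with the finiteness of the $\mu$-integral that defines the norm in $Q_\gamma$. This is the same justification the paper uses for a general $\mu$, so nothing new is required beyond this bookkeeping.
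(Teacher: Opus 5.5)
Your proposal is correct and is essentially the paper's argument. The paper gives no separate proof: the corollary is Theorem~\ref{th1} specialized to the finite measure $\mu=\sum_{i}c_i\delta_{\beta_i}$ supported in $[a,b]$, with $\beta_j$ (where $c_j>0$ and $\beta_j\in(1,N)$) showing that $\mathcal{P}_\mu\cap(1,N)\neq\emptyset$, and your bookkeeping on exchanging the sum with the integrals in the energy and weak formulation is sound.
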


If we take $\eta=0$ and consider $\lambda>0$ in
\eqref{superposition problem}, we obtain a Brezis--Nirenberg-type equation,
namely
\begin{eqnarray}\label{BN superposition equation}
     -\Delta u+
     \int_{(0,N)}
     \left(I_{\beta}\star |u|^{q(\beta)}\right)
     |u|^{q(\beta)-2}u\,d\mu(\beta)
     =
     \lambda |u|^{q_\gamma-2}u
     +|u|^{2^*-2}u,
     \quad\text{in }\R^N.
\end{eqnarray}
For this equation, we prove that if $\lambda$ lies in a sufficiently small
left neighborhood of an eigenvalue of problem \eqref{e1}, then
\eqref{BN superposition equation} has multiple pairs of nontrivial solutions.
More precisely, if the corresponding eigenvalue has multiplicity $m$, then we
obtain $m$ distinct pairs of solutions. This is stated as follows.

\begin{theorem}\label{th2}
    Let the assumptions of Theorem \ref{th1} hold, and suppose that
    \[
    \lambda_k=\cdots=\lambda_{k+m-1}<\lambda_{k+m}
    \]
    for some $k,m\geq1$. Then there exists $\delta_k>0$ such that equation
    \eqref{BN superposition equation} has $m$ pairs of nontrivial solutions
    at positive energy levels for all
    $\lambda\in(\lambda_k-\delta_k,\lambda_k)$.
\end{theorem}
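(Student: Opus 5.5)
We follow the scaled critical point theory of \cite{MePe1}, as in \cite{MaMePe1}, working on the radial space $\X$. We use the scaling $u_t(x)=t^{\gamma}u(tx)$, $t>0$, under which every term of $\Phi$ with $\eta=0$ scales by a common power of $t$. For the gradient term, $\int|\nabla u_t|^2=t^{2\gamma+2-N}\int|\nabla u|^2$. For the Hartree terms, $\int|I_{\beta/2}\star|u_t|^{q(\beta)}|^2=t^{2\gamma q(\beta)-\beta-N}\int|I_{\beta/2}\star|u|^{q(\beta)}|^2$, and $2\gamma q(\beta)-\beta-N=2\gamma+2-N$ for every $\beta$. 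For the subcritical term, $\int|u_t|^{q_\gamma}=t^{\gamma q_\gamma-N}\int|u|^{q_\gamma}$, with the same exponent $2\gamma+2-N$. Hence
\[
I(u)=\frac12\int_{\R^N}|\nabla u|^2dx+\int_{(0,N)}\frac{1}{2q(\beta)}\int_{\R^N}\left|I_{\beta/2}\star|u|^{q(\beta)}\right|^2dx\,d\mu(\beta),\qquad J(u)=\frac1{q_\gamma}\int_{\R^N}|u|^{q_\gamma}dx
\]
are both homogeneous of degree $s=2\gamma+2-N>0$ with respect to this scaling. The eigenvalues $\lambda_k$ are the minimax values of $I$ on the scaled manifold $\M=\{u\in\X: J(u)=1\}$ taken over symmetric subsets of cohomological index $\ge k$. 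Their existence, with $\lambda_k\nearrow\infty$, comes from \cite[Theorem 2.10]{MePe1}; the compactness it requires follows from the compact embedding $\X\incl L^{q_\gamma}$ given by Theorem~\ref{th0}. The critical term $\frac1{2^*}\int|u|^{2^*}$ scales with a different exponent, namely $2^*\gamma-N$, which is larger than $s$ when $\gamma>\frac{N-2}{2}$. So $\Phi=I-\lambda J-\frac1{2^*}\int|u|^{2^*}$ is a perturbation of $I-\lambda J$ by a higher-order term, and the abstract scaled results of \cite{MePe1} for such functionals apply.

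The abstract result we plan to invoke is the scaled analogue of the Bartolo--Benci--Fortunato multiplicity theorem for symmetric functionals. In its $\Z_2$-cohomological-index form (cf.\ \cite{MR1400007}, \cite{MePe1}) it reads as follows. Suppose $\Phi$ is even, $\Phi(0)=0$, and $\Phi$ satisfies (PS)$_c$ for all $c\in(0,c^*)$. Suppose further that there are a symmetric set $A_0$ with $i(A_0)\ge k+m-1$ and $\sup_{A_0}\Phi<c^*$, and a symmetric set $B_0$ with $i(\M\setminus B_0)\le k-1$ and $\inf_{B_0}\Phi>0$. Then $\Phi$ has $m$ pairs of critical points with levels in $(0,c^*)$.

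We construct the two sets using the scaling, choosing cones of the form $\{u_t:u\in\cdot,\,t\in[0,R]\}$.
\begin{itemize}
\item For $B_0$, take the scaled sphere $\{u_\rho: u\in\M,\ I(u)\ge\lambda_k\}$ with $\rho$ small. Since $\lambda<\lambda_k$ and the critical term is of higher order in $\rho$, $\Phi$ is bounded below there by a positive constant.
\item For $A_0$, use the sublevel $\{u\in\M: I(u)\le\lambda_{k+m-1}\}$. Its index is at least $k+m-1$ by the definition of $\lambda_{k+m-1}$. Because $\lambda_{k+m}>\lambda_k$, it can be replaced by a compact symmetric subset of the same index lying in $\{I\le\lambda_k+\varepsilon\}$.
\item On the cone over $A_0$ we have $\Phi(u_t)\le t^s(\lambda_k+\varepsilon-\lambda)-ct^{2^*\gamma-N}\int|u|^{2^*}$. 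The maximum of this over $t$ is $O\big((\lambda_k-\lambda+\varepsilon)^{\kappa}\big)$ for some $\kappa>0$, which tends to $0$ as $\lambda\to\lambda_k$.
\end{itemize}
Thus $\sup_{A_0}\Phi<c^*$ once $\lambda\in(\lambda_k-\delta_k,\lambda_k)$ with $\delta_k$ small. The positive lower bound for $\inf_{B_0}\Phi$ can be kept independent of $\lambda$ in this window, so the small $\varepsilon$ can be chosen compatibly. The index bounds then give $m$ pairs of critical points.

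The main obstacle is the compactness. We must show that $\Phi$ satisfies (PS)$_c$ for $c\in(0,c^*)$ with $c^*>0$ independent of $\lambda$ near $\lambda_k$; a natural candidate is $c^*=\frac1N S^{N/2}$, possibly reduced by the nonlocal term. This is where the hypotheses of Theorem~\ref{th1} are used, and we can reuse what Theorem~\ref{th1} already established about Palais--Smale sequences. The plan has four steps.
\begin{enumerate}
\item Boundedness of (PS) sequences in $\X$. Combine $\Phi(u_n)-\frac1{\theta}\Phi'(u_n)u_n$ with the scaling derivative, i.e.\ a Pohozaev-type identity obtained from $\frac{d}{dt}\Phi((u_n)_t)|_{t=1}$, and then use $q(b)<\frac{N}{N-2}$ and the restriction $2(\gamma-1)\le a<b\le2(\gamma+1)$ to control the Hartree terms uniformly in $\beta\in[a,b]$.
\item Compactness of the $L^{q_\gamma}$ part. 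The compact embedding $\X\incl L^{q_\gamma}$ passes the $\lambda J$ term to the limit.
\item Loss of compactness only in $L^{2^*}$. Apply a concentration-compactness / Brezis--Lieb splitting, which shows that compactness can fail only at a level of at least $\frac1N S^{N/2}$.
\item Convergence of the nonlocal term. Pass to the limit in the Hartree terms inside the $d\mu$ integral by dominated convergence in $\beta$. The uniform bounds come from $\mu$ being supported in $[a,b]$, where $A_\beta$ is bounded, together with the Hardy--Littlewood--Sobolev inequality.
\end{enumerate}

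Finally, as in Theorem~\ref{th1}, the case distinction $b\le\frac{N-2}{2}$ versus the bound on $\gamma$ is what ensures the nonlocal terms scale compatibly, so that the Pohozaev-type combination in step 1 gives coercivity; we therefore expect to apply it exactly as in Theorem~\ref{th1}.
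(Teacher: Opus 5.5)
\textbf{Assessment.} Your overall architecture matches the paper's: Theorem \ref{m1} with $c^*=\frac1N S^{N/2}$, $A_0$ a compact symmetric subset of index $k+m-1$ of $\{\widetilde\Psi<\lambda_k+\varepsilon\}$, and $B_0=\widetilde\Psi_{\lambda_k}$. However, the geometric estimates fail as you set them up, and there is a gap in how you justify compactness.

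\textbf{The linking geometry.} First, you normalize on $\{J=1\}$ instead of on $\mathcal{M}_s=\{I_s=1\}$, and with that normalization your claim $\inf_B\Phi>0$ is false. The set $\{J=1,\ I\ge\lambda_k\}$ is unbounded in $\X$, and on it $\int|u|^{2^*}$ can grow like $I(u)^{N/(N-2)}$. Concretely, take a fixed radial bump $\phi$ and set $\phi_\epsilon=\epsilon^{-(N-2)/2}\phi(\cdot/\epsilon)$. This keeps $\int|\nabla\phi_\epsilon|^2$ and $\int|\phi_\epsilon|^{2^*}$ fixed, while the Hartree and $L^{q_\gamma}$ terms tend to $0$ (because $q(b)<\frac{N}{N-2}$ and $q_\gamma<2^*$). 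Rescaling to $u_\epsilon=(\phi_\epsilon)_{\tau_\epsilon}$ with $J(u_\epsilon)=1$ forces $\tau_\epsilon\to\infty$. So $u_\epsilon$ lies in the set you scale by $\rho$, yet $\Phi((u_\epsilon)_\rho)\to-\infty$ for every fixed $\rho$.

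This is why Theorem \ref{m1} is formulated with $A_0,B_0\subset\mathcal{M}_s$. The manifold $\mathcal{M}_s$ is bounded in $\X\hookrightarrow L^{2^*}$, and that is what gives $\Phi(u_t)\ge t^s(1-\lambda/\lambda_k+o(1))$ uniformly on $B_0$.

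Second, your bound $O((\lambda_k-\lambda+\varepsilon)^\kappa)$ on the cone hides a constant that depends on $\inf_{A_0}\int|u|^{2^*}$. Since $A_0$ changes with $\varepsilon$ and you must let $\varepsilon\to0$, compactness of $A_0$ alone does not suffice. The paper obtains a lower bound independent of $\varepsilon$ as follows:
\begin{itemize}
\item $\mathcal{M}_s$ is bounded in $L^l$ for $l\in(q^\alpha_{\mathrm{rad}},q_\gamma)$;
\item on $A_0$ one has $\int|u|^{q_\gamma}>q_\gamma/\lambda_{k+m}$ for every $\varepsilon<\lambda_{k+m}-\lambda_{k+m-1}$;
\item interpolating $L^{q_\gamma}$ between $L^l$ and $L^{2^*}$ then bounds $\int|u|^{2^*}$ from below uniformly.
\end{itemize}
The same bound gives $\sup_A\Phi\le0$ for large $R$, which is a hypothesis of Theorem \ref{m1} missing from your statement of it. You also still need to check $i(\mathcal{M}_s\setminus B_0)\le k-1$, which follows from Theorem \ref{Theorem 301}~(iii).

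\textbf{Compactness.} For the (PS) condition you can simply cite Lemma \ref{ps-sup}. Its proof works verbatim with $\eta=0$ and gives $c^*=\frac1NS^{N/2}$ with no reduction. Your own sketch, however, puts the Pohozaev identity in the wrong place.

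In Step 1 it cannot be used. The quantity $\frac{d}{dt}\Phi((u_n)_t)|_{t=1}$ is $\Phi'(u_n)$ applied to $\gamma u_n+x\cdot\nabla u_n$. That direction is in general neither in $\X$ nor bounded by $\|u_n\|$, so it need not tend to $0$ along a Palais--Smale sequence. The paper proves boundedness without Pohozaev:
\begin{itemize}
\item if $\|u_n\|\to\infty$, the rescalings $\widetilde u_n=(u_n)_{t_n}\in\mathcal{M}_s$ satisfy $\int|\widetilde u_n|^{2^*}=O(\widetilde t_n^{\,-2s/(N-2)})$, because the critical term scales with the larger power;
\item interpolation then sends the $L^{q_\gamma}$ term to zero;
\item $2^*\Phi(u_n)-\Phi'(u_n)u_n$, divided by $\widetilde t_n^{\,s}$, forces $\widetilde u_n\to0$, since $q(b)<\frac{N}{N-2}$ makes the Hartree coefficients positive;
\item this contradicts $I_s(\widetilde u_n)=1$.
\end{itemize}

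Pohozaev is indispensable in your Step 3 instead, applied to the weak limit $u$, which is a genuine solution. Together with $\Phi'(u)u=0$ it gives $\Phi(u)=\frac1N\int|u|^{2^*}\ge0$ for the part of the energy carried by $u$. Without this, Brezis--Lieb splitting does not yield the threshold $\frac1NS^{N/2}$. The reason is that $\Phi(u)-\frac1{2^*}\Phi'(u)u$ contains the term $-\lambda(\frac1{q_\gamma}-\frac1{2^*})\int|u|^{q_\gamma}$ and so has no sign a priori.

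Justifying Pohozaev for weak solutions requires $W^{2,\sigma}_{\mathrm{loc}}$ regularity (Lemmas \ref{eigen-regularity} and \ref{pohozaev sup}). That is the actual role of the hypothesis ``$b\le\frac{N-2}{2}$ or $\gamma<\frac{(N-2)(a+2)}{2(2b+2-N)}$''. It has nothing to do with scaling compatibility, which holds for every $\beta$ by the choice of $q(\beta)$.
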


\begin{corollary}
    Let the assumptions of Theorem \ref{th2} hold, and let
    $\{c_i\}_{i\in\N}$ be a sequence of nonnegative numbers, not all zero,
    such that
    \[
    \sum_{i=1}^\infty c_i<\infty.
    \]
    Let also $\{\beta_i\}_{i\in\N}\subset [a,b]\subset(0,N)$. Suppose that for some $j\geq1$, one has $c_j>0$ and $\beta_j\in(1,N)$. Then there
    exists $\delta_k>0$ such that the equation
    \[
    -\Delta u+
    \sum_{i=1}^\infty c_i
    \left(I_{\beta_i}\star |u|^{q(\beta_i)}\right)
    |u|^{q(\beta_i)-2}u
    =
    \lambda |u|^{q_\gamma-2}u
    +|u|^{2^*-2}u,
    \quad\text{in }\,\R^N,
    \]
    has $m$ pairs of nontrivial solutions at positive energy levels for all
    $\lambda\in(\lambda_k-\delta_k,\lambda_k)$.
\end{corollary}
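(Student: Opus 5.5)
The plan is to deduce the corollary directly from Theorem \ref{th2} by choosing the measure
\[
\mu:=\sum_{i=1}^\infty c_i\,\delta_{\beta_i}.
\]
We then check that this $\mu$ satisfies every hypothesis imposed on the measure in Theorem \ref{th1}, and hence in Theorem \ref{th2}.

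\textbf{$\mu$ is an admissible measure.} Since $c_i\ge0$, $\mu$ is a positive Borel measure on $(0,N)$. Its total mass is $\mu((0,N))=\sum_i c_i<\infty$, so it is finite, and it is nonzero because not all $c_i$ vanish. Because $\{\beta_i\}\subset[a,b]$, every atom lies in $[a,b]$, so $\mu((0,N)\setminus[a,b])=0$.

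\textbf{The positive neighborhood property.} Let $j$ be the index with $c_j>0$ and $\beta_j\in(1,N)$. Any open interval $I\ni\beta_j$ satisfies $\mu(I)\ge c_j>0$. Hence $\beta_j\in\mathcal P_\mu\cap(1,N)$, and this set is nonempty.

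\textbf{Identifying the equations.} For any $u$ in the relevant space and any $\varphi$, integration against the discrete measure gives
\[
\int_{(0,N)}\left(I_\beta\star|u|^{q(\beta)}\right)|u|^{q(\beta)-2}u\,\varphi\,d\mu(\beta)=\sum_{i=1}^\infty c_i\int_{\R^N}\left(I_{\beta_i}\star|u|^{q(\beta_i)}\right)|u|^{q(\beta_i)-2}u\,\varphi .
\]
Interchanging the sum and the $x$-integral is justified by Tonelli/Fubini. For a weak formulation with $\varphi$ of arbitrary sign, one first bounds the integrand in absolute value. This uses that $q(\beta)$ and $A_\beta$ are continuous on the compact set $[a,b]$, so the Hartree-type integrals are uniformly controlled for $\beta\in[a,b]$, and that $\sum c_i<\infty$.

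The same identity at the level of energies shows the following. The functional $\Phi$ with $\eta=0$ for this $\mu$ coincides with the energy of the series equation. The spaces $Q_\gamma$ and $\X$ built from $\mu$ are exactly the natural energy spaces for the series problem. The eigenvalues $\lambda_k$ of \eqref{e1} are those of the discrete problem.

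\textbf{Conclusion.} Equation \eqref{BN superposition equation} with this $\mu$ is precisely the displayed equation of the corollary, in the weak sense. The remaining hypotheses of Theorem \ref{th1} on $\gamma,a,b,N$ are assumed verbatim. Theorem \ref{th2} therefore yields $\delta_k>0$ and $m$ pairs of nontrivial solutions at positive energy levels for every $\lambda\in(\lambda_k-\delta_k,\lambda_k)$.

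The only step that needs care is justifying the interchange of summation and integration in the weak formulation. It is routine given the summability of $c_i$ and the uniform bounds in $\beta\in[a,b]$.
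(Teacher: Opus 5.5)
Your proposal is correct and is the argument the paper has in mind: the corollary is stated without a separate proof, as the specialization of Theorem~\ref{th2} to $\mu=\sum_i c_i\delta_{\beta_i}$, which is a finite positive measure carried by $[a,b]$ with $\beta_j\in\mathcal{P}_\mu\cap(1,N)$. The one weak point is your justification of the sum--integral interchange: no uniform-in-$\beta$ control of the Hartree energies is needed (and for a fixed $u\in\X$ they need not be bounded in $\beta$), because for a discrete measure Tonelli gives $\int_{(0,N)}\int_{\R^N}|g(x,\beta)|\,dx\,d\mu(\beta)=\sum_i c_i\int_{\R^N}|g(x,\beta_i)|\,dx$ for the integrand $g$ of the weak formulation, so the series equation and the $\mu$-equation are the same statement whenever either makes sense.
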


\begin{remark}
    We assume that either
    \[
    b\leq \frac{N-2}{2},
    \]
    or
    \[
    b>\frac{N-2}{2}
    \quad\text{and}\quad
    \gamma<\frac{(N-2)(a+2)}{2(2b+2-N)},
    \]
    in order to guarantee regularity of weak solutions for both the eigenvalue
    problem and equation \eqref{superposition problem}. We also assume
    $\gamma>\frac{N-2}{2}$, which is equivalent to the inequality
    $q_\gamma<2^*$. This regularity is essential in order to employ
    Pohozaev-type inequalities; see Lemmas \ref{eigen-regularity} and
    \ref{pohozaev sup}. These inequalities allow us to make proper use of the
    abstract theory developed in \cite{MePe1}.

    The restriction $2(\gamma-1)\leq a$ ensures that the space $\x$ is
    uniformly convex and consequently reflexive; see Section
    \ref{uniform-convexity}. The inequality $b\leq 2(\gamma+1)$ implies the
    weak continuity of the superposed Riesz operator; see Propositions
    \ref{weak riesz} and \ref{weak convergence riesz}. Finally, the inequality
    $q(b)<N/(N-2)$, together with a Pohozaev-type inequality, makes it possible
    to prove that Palais--Smale sequences are bounded; see Lemma \ref{ps-sup}.
\end{remark}

\section{Scaling and scaled operators}\label{sec2}

This section will present the tools used to prove Theorems \ref{th1} \& \ref{th2}. It is a portion of a novel critical point theory for scaled operators on a Banach space. A detailed presentation of this technique can be found in \cite{MePe1}. 

\begin{definition}
    Let $W$ be a reflexive Banach space. A scaling on $W$ is a continuous mapping $W\times[0,\infty)\to W$, $(u,t)\mapsto u_t$ satisfying
    \begin{enumerate}[label=$(H_{\arabic*})$]
    \item $(u_{t_1})_{t_2}=u_{t_1t_2}$ with $t_1,t_2\geq0$ for all $u\in W$ and $t_1,t_2\geq0,$
    \item $(\tau u)_t=\tau u_t$, $\tau\in\R$ $t\geq0$ for all $u\in W$, $\tau\in\R$ and $t\geq0$,
    \item $u_0=0$ and $u_1=u$ for all $u\in W$,
    \item $u_t$ is bounded on bounded sets of $W\times[0,\infty)$.
    \item $\exists s>0$ such that $\|u_t\|=O(t^{s})$ as $t\to\infty$ uniformly on bounded sets.
\end{enumerate}
\end{definition}
 Denote by $W^*$ the dual of $W$. Recall that $q\in C(W,W^*)$ is a potential operator if there is a functional $Q\in C^1(W,\R)$, called a potential for $q$, such that $Q^\prime=q$. By replacing $Q$ with $Q-Q(0)$ if necessary, we may assume that $Q(0)=0$.

 \begin{definition}
     A scaled operator is an odd potential operator $q\in C(W,W^*)$ that maps bounded sets into bounded sets and satisfies
\[q(u_t)v_t=t^sq(u)v,\quad\forall u,v\in W, t\geq0.
     \]
     We denote by $\A_s$ the class of odd scaled potential operators.
     \end{definition}
Let us denote by $I_s$ the potential of the scaled operator $A_s$ with $I_s(0)=0$. Of course $I_s$ is even, bounded on bounded sets, and satisfies the scaling property
\[I_s(u_t)=t^sI_s(u),\quad\forall u\in W, t\geq0\]
(see \cite[Proposition 2.2]{MePe1}).

We will consider the question of existence and multiplicity of solutions to equations of the form
\[
A_s(u)=f(u)\quad\text{in } W^*,
\]
where $f\in C(W,W^*)$ is a potential operator, and $A_s$ is a scaled operator satisfying
\begin{enumerate}[label=$(H_{\arabic*})$, start=6]
\item $A_s(u)u>0$ for all $u\in W\backslash\set{0}$,
\item every sequence $(u_j)$ in $W$ such that $u_j\weak u$ and $A_s(u_j)(u_j-u)\to0$ has a subsequence that converges strongly to $u$.
\end{enumerate}
 Solutions of the above equation coincide with critical points of the $C^1$-functional
\[
\Phi(u)=I_s(u)-F(u),\quad u\in W,
\]
where $F$ is the potential of $f$ with $F(0)=0$.

\subsection{Scaled eigenvalue problems}
Now let us consider the eigenvalue problem
 \begin{equation}\label{3-1}
     A_s(u)=\lambda B_s(u)\quad\text{in } W^*,
 \end{equation}
where $A_s$ and $B_s$ are scaled operators satisfying $(H_6)$ and $(H_7)$ and
\begin{enumerate}[label=$(H_{\arabic*})$, start=8]
    \item $B_s(u)u>0$ for all $u\in W\backslash\set{0}$,
    \item if $u_j\weak u$ in $W$, then $B_s(u_j)\to B_s(u)$ in $W^*$,
\end{enumerate}
and $\lambda\in\R$. We say that $\lambda$ is an eigenvalue if there is a $u\in W\backslash\set{0}$, called an eigenfunction associated with $\lambda$, satisfying equation $\eqref{3-1}$. If that is the case, then $u_t$ is also an aigenfunction associated with $\lambda$ for any $t>0$ since
\[
A_s(u_t)v=A_s(u_t)(v_{1/t})_t=t^sA_s(u)v_{1/t}=t^s\lambda B_s(u)v_{1/t}=\lambda B_s(u_t)(v_{1/t})_t=\lambda B(u_t)v
\]
for all $v\in W$. We denote by $\sigma(A_s,B_s)$ the spectrum, i.e., the set of all eigenvalues, of the pair of scaled operators $(A_s,B_s)$. We have $\sigma(A_s,B_s)\subset (0,\infty)$ by $(H_6)$ and $(H_8)$.

Let us denote by $J_s$ the potential of $B_s$ with $J_s(0)=0$. We also assume that $I_s$ and $J_s$ satisfy
\begin{enumerate}[label=$(H_{\arabic*})$, start=10]
    \item $I_s$ is coercive, i.e., $I_s(u)\to\infty$ as $\|u\|\to\infty$,
    \item the equation $I_s(tu)=1$ has a unique positive solution $t$ for each $u\in W\backslash\set{0}$,
    \item every solution of $\eqref{3-1}$ satisfies
    \[
    I_s(u)=\lambda J_s(u).
    \]
\end{enumerate}

The eigenvalue problem $\eqref{3-1}$ has the following variational formulation. Let
\[
\Psi(u)=\frac{1}{J_s(u)},\quad u\in W\backslash\set{0},
\]
let
\[
\M_s=\set{u\in W:I_s(u)=1},
\]
and let $\w{\Psi}=\Psi|_{\M_s}$. Then $\M_s$ is a complete, symmetric, and bounded $C^1$-Finsler manifold, and eigenvalues of problem $\eqref{3-1}$ coincide with critical values of $\w{\Psi}$ (see \cite[Proposition 2.5]{MePe1}).

\subsubsection{Minimax eigenvalues}
\begin{definition}[Fadell and Rabinowitz \cite{MR0478189}] \label{Definition 301}
Let $\A$ denote the class of symmetric subsets of $W \setminus \set{0}$. For $A \in \A$, let $\overline{A} = A/\Z_2$ be the quotient space of $A$ with each $u$ and $-u$ identified, let $f : \overline{A} \to \RP^\infty$ be the classifying map of $\overline{A}$, and let $f^\ast : H^\ast(\RP^\infty) \to H^\ast(\overline{A})$ be the induced homomorphism of the Alexander-Spanier cohomology rings. The cohomological index of $A$ is defined by
\[
i(A) = \begin{cases}
0 & \text{if } A = \emptyset\\[5pt]
\sup \set{m \ge 1 : f^\ast(\omega^{m-1}) \ne 0} & \text{if } A \ne \emptyset,
\end{cases}
\]
where $\omega \in H^1(\RP^\infty)$ is the generator of the polynomial ring $H^\ast(\RP^\infty) = \Z_2[\omega]$.
\end{definition}

\begin{example}
The classifying map of the unit sphere $S^N$ in $\R^{N+1},\, N \ge 0$ is the inclusion $\RP^N \incl \RP^\infty$, which induces isomorphisms on the cohomology groups $H^l$ for $l \le N$, so $i(S^N) = N + 1$.
\end{example}

The following proposition summarizes the basic properties of the cohomological index.

\begin{proposition}[Fadell and Rabinowitz \cite{MR0478189}] \label{Proposition 300}
The index $i : \A \to \N \cup \set{0,\infty}$ has the following properties:
\begin{enumerate}
\item[$(i_1)$]Definiteness: $i(A) = 0$ if and only if $A = \emptyset$.
\item[$(i_2)$] Monotonicity: If there is an odd continuous map from $A$ to $B$ (in particular, if $A \subset B$), then $i(A) \le i(B)$. Thus, equality holds when the map is an odd homeomorphism.
\item[$(i_3)$] Dimension: $i(A) \le \dim W$.
\item[$(i_4)$] Continuity: If $A$ is closed, then there is a closed neighborhood $N \in \A$ of $A$ such that $i(N) = i(A)$. When $A$ is compact, $N$ may be chosen to be a $\delta$-neighborhood $N_\delta(A) = \set{u \in W : \dist{u}{A} \le \delta}$.
\item[$(i_5)$] Subadditivity: If $A$ and $B$ are closed, then $i(A \cup B) \le i(A) + i(B)$.
\item[$(i_6)$] Stability: If $\Sigma A$ is the suspension of $A \ne \emptyset$, obtained as the quotient space of $A \times [-1,1]$ with $A \times \set{1}$ and $A \times \set{-1}$ collapsed to different points, then $i(\Sigma A) = i(A) + 1$.
\item[$(i_7)$] Piercing property: If $C$, $C_0$, and $C_1$ are closed and $\varphi : C \times [0,1] \to C_0 \cup C_1$ is a continuous map such that $\varphi(-u,t) = - \varphi(u,t)$ for all $(u,t) \in C \times [0,1]$, $\varphi(C \times [0,1])$ is closed, $\varphi(C \times \set{0}) \subset C_0$, and $\varphi(C \times \set{1}) \subset C_1$, then $i(\varphi(C \times [0,1]) \cap C_0 \cap C_1) \ge i(C)$.
\item[$(i_8)$] Neighborhood of zero: If $U$ is a bounded closed symmetric neighborhood of $0$, then $i(\bdry{U}) = \dim W$.
\end{enumerate}
\end{proposition}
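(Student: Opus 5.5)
The plan is to derive every property from two ingredients: the naturality of the classifying map $f:\overline{A}\to\RP^\infty$, and standard features of Alexander--Spanier cohomology, namely tautness on closed (paracompact) subsets, cup products of relative classes, and the Mayer--Vietoris sequence. The element $f^*(\omega)\in H^1(\overline{A})$ is the first Stiefel--Whitney class of the double cover $A\to\overline{A}$. Hence $i(A)$ is one plus the largest power of this class that does not vanish.

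\emph{Definiteness, monotonicity, dimension, neighborhood of zero.} If $A\neq\emptyset$, then $f^*(\omega^0)=1\neq0$, so $i(A)\ge1$; this gives $(i_1)$. For $(i_2)$, an odd continuous map $g:A\to B$ descends to $\overline{g}:\overline{A}\to\overline{B}$. The composition $f_B\circ\overline{g}$ classifies the same double cover as $f_A$, so it is homotopic to $f_A$. Therefore $f_A^*=\overline{g}^*f_B^*$, and $f_B^*(\omega^{m-1})=0$ forces $f_A^*(\omega^{m-1})=0$. For $(i_3)$ with $\dim W=n<\infty$, I use the odd map $u\mapsto u/\|u\|$ into $S^{n-1}$, together with the Example $i(S^{n-1})=n$ and $(i_2)$. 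For $(i_8)$, the radial map $u\mapsto u/\|u\|$ restricted to $\bdry{U}$ is an odd map onto the unit sphere $S$. The radial map $S\to\bdry{U}$, sending $u$ to the unique point of $\bdry{U}$ on the ray through $u$, is an odd map in the other direction; its continuity needs a short argument, or I replace it by a continuous odd retraction of $W\setminus\set{0}$ onto $\bdry{U}$. Thus $i(\bdry{U})=i(S)$. Finally $i(S)=\dim W$: in finite dimensions this is the Example, and in infinite dimensions $S$ contains $S^{k}$ for every $k$, so $(i_2)$ gives $i(S)=\infty$.

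\emph{Continuity, subadditivity, stability.} For $(i_4)$, tautness gives $H^*(\overline{A})=\varinjlim H^*(\overline{N})$ over closed symmetric neighborhoods $N$ of $A$. If $f_A^*(\omega^{m})=0$, then this class already vanishes on some $\overline{N}$, so $i(N)\le i(A)$; the reverse inequality is $(i_2)$. When $A$ is compact the $\delta$-neighborhoods are cofinal. For $(i_5)$, set $a=i(A)$ and $b=i(B)$. Then $\omega^a$ pulled back to $\overline{A\cup B}$ restricts to zero on $\overline{A}$, so it lifts to $H^a(\overline{A\cup B},\overline{A})$; similarly $\omega^b$ lifts to $H^b(\overline{A\cup B},\overline{B})$. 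Their cup product lies in $H^{a+b}(\overline{A\cup B},\overline{A}\cup\overline{B})=0$; the relative cup product is available because $A,B$ are closed and Alexander--Spanier cohomology is taut. Hence $f^*(\omega^{a+b})=0$ and $i(A\cup B)\le a+b$. For $(i_6)$, write $\Sigma A=C_+\cup C_-$ as two contractible cones meeting in $A$. The Mayer--Vietoris (equivalently Gysin) sequence of the double covers shows that multiplication by $\omega$ raises the top nonvanishing power by exactly one.

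\emph{Piercing.} This is the step I expect to be the main obstacle. Put $X=\varphi(C\times[0,1])$ and $X_j=X\cap C_j$ for $j=0,1$; these are closed, and $X=X_0\cup X_1$. Suppose, for contradiction, that $i(X_0\cap X_1)<i(C)$. By $(i_4)$ I first thicken $X_0\cap X_1$ to a closed symmetric neighborhood with the same index. I then build an odd map from the suspension $\Sigma(C)$, or from a suitable quotient of it, into a space assembled from $X_0$, $X_1$ and this neighborhood. The Mayer--Vietoris exact sequence for $X=X_0\cup X_1$ then relates the classes $\omega^k$ on $X_0$, $X_1$ and $X_0\cap X_1$. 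The homotopy $\varphi$ shows that the pullback of $\omega^{i(C)-1}$ to $C\times[0,1]$ is nonzero. Combining this with exactness and the relative cup-product trick of $(i_5)$ should force $\omega^{i(C)-1}$ to be nonzero on $X_0\cap X_1$, which gives the contradiction. Getting the relative classes and the oddness of the maps to fit together is where I expect the delicate bookkeeping to lie, and I would follow the argument in Fadell--Rabinowitz \cite{MR0478189} closely there.
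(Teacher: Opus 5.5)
The paper does not prove this proposition; it quotes it from Fadell--Rabinowitz \cite{MR0478189}, so your outline has to stand on its own. Your treatment of $(i_1)$--$(i_5)$ is the standard one and is fine. There are two genuine gaps, one in the piercing property and one in $(i_8)$, plus a looser step in $(i_6)$.

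\emph{The piercing property $(i_7)$ is not proved.} The plan (thickening $X_0\cap X_1$ via $(i_4)$, odd maps out of $\Sigma C$, relative cup products) never reaches a contradiction; it ends with ``should force''. The missing step is short and needs none of that machinery. Let $X=\varphi(C\times[0,1])$, $X_j=X\cap C_j$, $k=i(C)$, and let $w\in H^1(\overline{X})$ be the pullback of $\omega$.

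Suppose $w^{k-1}$ vanished on $\overline{X_0\cap X_1}=\overline{X_0}\cap\overline{X_1}$. Mayer--Vietoris is exact for the closed cover $\overline{X}=\overline{X_0}\cup\overline{X_1}$ in Alexander--Spanier cohomology of paracompact spaces. So there would be $\eta\in H^{k-1}(\overline{X})$ with $\eta=w^{k-1}$ on $\overline{X_0}$ and $\eta=0$ on $\overline{X_1}$.

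Now pull $\eta$ back along the maps $\overline{C}\to\overline{X}$ induced by $\varphi(\cdot,0)$ and $\varphi(\cdot,1)$. The first pullback is $f_C^*(\omega^{k-1})\neq0$, by naturality, since $\varphi(\cdot,0)$ is odd with values in $X_0$. The second is $0$, since $\varphi(\cdot,1)$ lands in $X_1$. These two maps are homotopic through $\varphi$, so the pullbacks agree, which is a contradiction. Hence $i(X\cap C_0\cap C_1)\ge k$; if $i(C)=\infty$, run this for every $k$.

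\emph{In $(i_8)$ the inequality $i(\bdry{U})\ge\dim W$ cannot come from an odd map $S\to\bdry{U}$.} $U$ is not assumed star-shaped, so a ray can meet $\bdry{U}$ in many points, and the first exit point is not continuous in the direction. An odd retraction of $W\setminus\{0\}$ onto $\bdry{U}$ is impossible whenever $\bdry{U}$ is disconnected, for example $U=\overline{B_1}\cup\{2\le\|u\|\le 3\}$.

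Worse, there may be no odd map $S\to\bdry{U}$ at all. In $\R^2$ one can take $\bdry{U}$ to be a symmetric Warsaw-circle-type continuum whose two path components are exchanged by $u\mapsto-u$. Every map from $S^1$ then lands in one path component and cannot be odd, even though $i(\bdry{U})=2$. So the lower bound must come from cohomology, and the usual route is through $(i_7)$:
\begin{itemize}
\item Fix a $k$-dimensional subspace $V$ with unit sphere $C$, and choose $r<R$ with $\overline{B_r}\subset U\subset B_R$.
\item Take $C_0=U$, $C_1=\overline{W\setminus U}$ and $\varphi(u,t)=((1-t)r+tR)u$.
\item Since $C_0\cap C_1=\bdry{U}$, the piercing property gives $i(\bdry{U})\ge k$ for every $k\le\dim W$.
\end{itemize}

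\emph{A smaller point in $(i_6)$.} The cones $C_\pm$ are interchanged by the free involution $[u,t]\mapsto[-u,-t]$. They are not symmetric and have the same image in $\overline{\Sigma A}$, so Mayer--Vietoris on them yields nothing. For the upper bound, apply $(i_5)$ to the symmetric closed cover $\{|t|\le\frac{1}{2}\}\cup\{|t|\ge\frac{1}{2}\}$, whose pieces have indices $i(A)$ and $1$. For the lower bound, use the Gysin sequence of $\Sigma A\to\overline{\Sigma A}$, together with the fact that $H^j(\Sigma A)\to H^j(A)$ vanishes for $j\ge1$ because it factors through a cone. When $i(A)=1$, use instead that $\Sigma A$ is connected.
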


Let $\F$ denote the class of symmetric subsets of $\M_s$. For $k\geq1$, let
\[
\F_k=\set{M\in\F:i(M)\geq k}
\]
and set
\[
\lambda_k:=\inf_{M\in\F_k}\sup_{u\in M}\w{\Psi}(u).
\]
We have the following theorem (see Perera et al. \cite[ Proposition 3.52 and Proposition 3.53]{MR2640827})
\begin{theorem}\label{Theorem 301}
    Assume $(H_1)-(H_{12})$. Then $\lambda_k\nearrow \infty$ is a sequence of eigenvalues of $\eqref{3-1}$.
    \begin{enumroman}
        \item\label{301-1} The first eigenvalue is given by
        \[
        \lambda_1=\min_{u\in\M_s}\w{\Psi}(u)>0.
        \]
        \item\label{301-2} If $\lambda_k=\dotsb\lambda_{k+m-1}=\lambda$, and $E_\lambda$ is the set of eigenfunctions associated with $\lambda$ that lie on $\M$, then
        \[i(E_\lambda)\geq m.\]
        \item\label{301-3} If $\lambda_k<\lambda<\lambda_{k+1}$, then
        \[i(\w{\Psi}^{\lambda_k})=i(\M_s\backslash\w{\Psi}_\lambda)=i(\w{\Psi}^\lambda)=i(\M_s\backslash\w{\Psi}_{\lambda_{k+1}})=k,\]
        where $\w{\Psi}^a=\set{u\in\M_s:\w{\Psi}(u)\leq a}$ and $\w{\Psi}_a=\set{u\in\M_s:\w{\Psi}(u)\geq a}$ for $a\in\R$.
    \end{enumroman}
\end{theorem}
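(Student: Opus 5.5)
This is the abstract result of Perera et al.\ \cite{MR2640827}, adapted to scaled operators as in \cite{MePe1}. I would reproduce it through the standard Ljusternik--Schnirelmann scheme for the even functional $\w{\Psi}$ on the symmetric manifold $\M_s$. The scheme has three ingredients: a compactness (Palais--Smale) statement, a deformation lemma, and the properties $(i_1)$--$(i_8)$ of the cohomological index.

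\textbf{Step 1: $\w{\Psi}$ satisfies (PS) on $\M_s$.} Take $u_j\in\M_s$ with $\w{\Psi}(u_j)\to c$ and $\w{\Psi}'(u_j)\to0$. Since $\M_s$ is bounded by $(H_{10})$ and $W$ is reflexive, a subsequence converges weakly, $u_j\weak u$.
\begin{itemize}
\item By $(H_9)$, $J_s(u_j)\to J_s(u)$. Because $c<\infty$, the values $J_s(u_j)=1/\w{\Psi}(u_j)$ stay bounded away from $0$, so $u\neq0$.
\item The Lagrange multiplier form of $\w{\Psi}'(u_j)\to0$ gives $A_s(u_j)=\mu_j B_s(u_j)+o(1)$, with $\mu_j$ computed by testing against $u_j$ (or against the scaling generator).
\item From this, $A_s(u_j)(u_j-u)\to0$, using $(H_9)$ and boundedness.
\item Then $(H_7)$ gives strong convergence $u_j\to u$.
\end{itemize}
The tangent directions of $\M_s$ are best handled through the scaling $t\mapsto (u_j)_t$, which moves $I_s$ and $J_s$ by the same factor $t^s$. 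This is the mechanism in \cite[Proposition 2.5]{MePe1}.

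\textbf{Step 2: the $\lambda_k$ are eigenvalues.} With (PS) and the equivariant deformation lemma on the $C^1$-Finsler manifold $\M_s$, the standard minimax argument applies. If $\lambda_k$ were not a critical value, an odd deformation would push some $M\in\F_k$ below $\lambda_k$; by $(i_2)$ its image still has index $\ge k$, a contradiction. Finiteness and $\lambda_k>0$ hold because $\M_s$ contains odd images of spheres of every dimension, so $\F_k\neq\emptyset$ by $(i_1)$ and $(i_2)$, and because $J_s$ is bounded on the bounded set $\M_s$.

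For (i), $\lambda_1=\inf\w{\Psi}$ since $\F_1$ contains $\{\pm u\}$. The infimum is attained by (PS) together with Ekeland's principle. Positivity follows from the boundedness of $J_s$ on $\M_s$ and $(H_8)$.

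For $\lambda_k\to\infty$, I expect the main obstacle. Suppose $\lambda_k\le\lambda<\infty$ for all $k$. Then the sublevel set $\w{\Psi}^{\lambda}$ would have infinite index. The critical set $K$ at levels $\le\lambda$ is compact by (PS), so by $(i_4)$ it has finite index. Deforming $\w{\Psi}^{\lambda}\setminus N_\delta(K)$ to lower levels and using subadditivity $(i_5)$ should then yield a contradiction; this is how \cite{MR2640827} argues. If that route stalls, the fallback is to use the compact embedding from $(H_9)$, writing $J_s$ on finite-codimensional complements to get that $J_s\to0$ uniformly there.

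\textbf{Step 3: parts (ii) and (iii).} For (ii), if $\lambda_k=\dots=\lambda_{k+m-1}=\lambda$ and $i(E_\lambda)<m$, take a neighborhood $N$ of the compact set $E_\lambda$ with $i(N)=i(E_\lambda)$, using $(i_4)$. Deform $\w{\Psi}^{\lambda+\varepsilon}\setminus N$ into $\w{\Psi}^{\lambda-\varepsilon}$. Choose $M\in\F_{k+m-1}$ with $\sup\w{\Psi}\le\lambda+\varepsilon$. Then $(i_5)$ gives
\[
i\bigl(\overline{M\setminus N}\bigr)\ge k,
\]
which forces $\lambda_k\le\lambda-\varepsilon$, a contradiction.

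For (iii), the definition of the $\lambda$'s gives $i(\w{\Psi}^{\lambda_k})\ge k$ and $i(\w{\Psi}^{\lambda})\le k$ for $\lambda<\lambda_{k+1}$. The sets $\M_s\setminus\w{\Psi}_{\lambda}$ lie between these by inclusion. It remains to show $i(\M_s\setminus\w{\Psi}_{\lambda_{k+1}})\le k$. This follows by deforming that set, since there are no critical values in $(\lambda_k,\lambda_{k+1})$, into $\w{\Psi}^{\lambda}$ for $\lambda$ just below $\lambda_{k+1}$, and applying $(i_2)$.
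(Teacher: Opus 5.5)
Your overall scheme is the standard Ljusternik--Schnirelmann argument behind the result the paper quotes without proof (Perera--Agarwal--O'Regan, Propositions 3.52--3.53, combined with the Mercuri--Perera correspondence between eigenvalues and critical values of $\w{\Psi}$): (PS) for $\w{\Psi}$ on $\M_s$, equivariant deformation, and $(i_1)$--$(i_8)$. Steps 1--2 and part (ii) are in line with it. There is, however, a genuine gap in part (iii), in the upper bound $i(\M_s\setminus\w{\Psi}_{\lambda_{k+1}})\le k$.

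You justify that bound by deforming the set into $\w{\Psi}^{\lambda}$ ``since there are no critical values in $(\lambda_k,\lambda_{k+1})$''. Nothing in $(H_1)$--$(H_{12})$ gives this. The minimax values $\lambda_k$ are eigenvalues, but they need not exhaust $\sigma(A_s,B_s)$; this is open already for the $p$-Laplacian. Nothing prevents eigenvalues from accumulating at $\lambda_{k+1}$ from below, and then no deformation into any $\w{\Psi}^{\lambda}$ with $\lambda<\lambda_{k+1}$ exists. Nor can you use the open set $\set{u\in\M_s:\w{\Psi}(u)<\lambda_{k+1}}$ itself as a competitor in $\F_{k+1}$, since its supremum can equal $\lambda_{k+1}$.

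The missing ingredient is the compact-subset property of the index. Suppose $i(\M_s\setminus\w{\Psi}_{\lambda_{k+1}})\ge k+1$. This open symmetric set then contains a compact symmetric $C$ with $i(C)\ge k+1$; this is the Degiovanni--Lancelotti fact the paper itself invokes in the proofs of Theorems~\ref{th1} and~\ref{th2}. Then $\max_C\w{\Psi}<\lambda_{k+1}$, contradicting the definition of $\lambda_{k+1}$. No information about the spectrum between $\lambda_k$ and $\lambda_{k+1}$ is needed.

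There is also a smaller gap in (iii). The bound $i(\w{\Psi}^{\lambda_k})\ge k$ does not follow from the definition, because the infimum defining $\lambda_k$ need not be attained. The definition only gives $i(\w{\Psi}^{\lambda_k+\varepsilon})\ge k$ for every $\varepsilon>0$. To pass to $\varepsilon=0$, use $(i_4)$ to take a closed symmetric neighborhood $N$ of $\w{\Psi}^{\lambda_k}$ with $i(N)=i(\w{\Psi}^{\lambda_k})$. Since $\mathrm{int}\,N$ contains the compact critical set at level $\lambda_k$, the deformation lemma (in its version with controlled displacement) yields an odd $\eta$ with $\eta(\w{\Psi}^{\lambda_k+\varepsilon})\subset\w{\Psi}^{\lambda_k-\varepsilon}\cup\mathrm{int}\,N\subset N$. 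Then $(i_2)$ gives $k\le i(N)$.

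Finally, your sketch for $\lambda_k\to\infty$ can be made to work, but it is cleaner to run it at the single level $c=\lim\lambda_k$, exactly as in your part (ii). Use $n=i(K_c)<\infty$ and a competitor in $\F_{k+n}$, rather than a descent across all levels below $\lambda$. Note that finiteness of $i(K_c)$ comes from $K_c$ being compact and symmetric, which is a separate fact from $(i_4)$.
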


\subsection{Multiplicity based on scaling}

Let $\Phi\in C^1(W,\R)$ be an even functional, i.e., $\Phi(-u)=\Phi(u)$ for all $u\in W$. Assume that $\exists c^*>0$ such that $\Phi$ satisfies the $(PS)_c$ condition for all $c\in(0,c^*)$. Let $\Gamma$ denote the group of odd homeomorphisms of $W$ that are the identity outside $\Phi^{-1}(0,c^*)$. Let $\A^*$ denote the class of symmetric subsets of $W$, and let
\begin{eqnarray*}
    \M_\rho=\set{u\in W: I_s(u)=\rho^s}=\set{u_\rho:u\in\M}
\end{eqnarray*}
for $\rho>0.$
\begin{definition}[Benci \cite{MR84c:58014}]
    The pseudo-index of $M\in\A^*$ related to $i$, $\M_\rho$, and $\Gamma$ is defined by 
    \begin{eqnarray*}
        i^*(M)=\min_{\gamma\in\Gamma} i(\gamma(M)\cap\M_\rho).
    \end{eqnarray*}
\end{definition}
We have the following multiplicity result.

\begin{theorem}[Mercuri \& Perera \cite{MePe1}]\label{m1}
    Let $A_0$ and $B_0$ be symmetric subsets of $\M$ such that $A_0$ is compact, $B_0$ is closed, and
    \begin{eqnarray}\label{m2}
        i(A_0)\geq k+m-1,\quad i(\M\backslash B_0)\leq k-1
    \end{eqnarray}
    for some $k,m\geq1$. Let $R>\rho>0$ and let
    \begin{eqnarray*}
        X&=&\set{u_t:u\in A_0, 0\leq t\leq R},\\
        A&=&\set{u_R:u\in A_0},\\
        B&=&\set{u_{\rho}:u\in B_0}.
        \end{eqnarray*}
        Assume that
        \begin{eqnarray}\label{m1-2}
            \sup_{u\in A}\Phi(u)\leq0<\inf_{u\in B}\Phi(u),\quad \sup_{u\in X}\Phi(u)<c^*.
        \end{eqnarray}
        For $j=k,\dots,k+m-1$, let
        \begin{eqnarray*}
            \A_j^*=\set{M\in\A^*:M\text{ is compact and }i^*(M)\geq j}
        \end{eqnarray*}
        and set
        \begin{eqnarray*}
            c^*_j=\inf_{M\in\A_j^*}\max_{u\in M}\Phi(u).
        \end{eqnarray*}
        Then $0<c_k^*\leq\cdots\leq c_{k+m-1}^*<c^*$, each $c_j^*$ is a critical value of $\Phi$, and $\Phi$ has $m$ distinct pairs of associated critical points.
\end{theorem}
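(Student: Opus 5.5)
The plan is a Benci pseudo-index minimax argument. The scaling enters only through the homeomorphisms $u\mapsto u_\rho$ between $\M$ and $\M_\rho$, and through the path $t\mapsto u_t$ joining $0$ to $A$. The four steps below, in order, are: $c_k^*>0$, $c_{k+m-1}^*<c^*$, each $c_j^*$ is critical, and the multiplicity count.

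\emph{Step 1: $c_k^*>0$.} Let $M\in\A_k^*$. Since $\mathrm{id}\in\Gamma$, we get $i(M\cap\M_\rho)\ge k$. By $(H_1)$ and $(H_3)$, the map $u\mapsto u_\rho$ is an odd homeomorphism $\M\to\M_\rho$ with inverse $u\mapsto u_{1/\rho}$; oddness comes from $(H_2)$. It carries $B_0$ onto $B$, so $i(\M_\rho\setminus B)=i(\M\setminus B_0)\le k-1$ by $(i_2)$. If $M\cap B=\emptyset$, then $M\cap\M_\rho\subset\M_\rho\setminus B$, and monotonicity would give $i(M\cap \M_\rho)\le k-1$, a contradiction. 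Hence $M$ meets $B$, so $\max_M\Phi\ge\inf_B\Phi>0$. Taking the infimum over $M$ gives $c_k^*\ge\inf_B\Phi>0$.

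\emph{Step 2: $c_{k+m-1}^*<c^*$.} I will show $X\in\A_{k+m-1}^*$. First, $X$ is compact and symmetric, being the continuous image of the compact set $A_0\times[0,R]$ with $(-u)_t=-u_t$. Next, fix $\gamma\in\Gamma$ and define $\varphi(u,t)=\gamma(u_{Rt})$ on $A_0\times[0,1]$. Set $C_0=\{I_s\le\rho^s\}$ and $C_1=\{I_s\ge\rho^s\}$, so that $C_0\cap C_1=\M_\rho$.
\begin{itemize}
\item At $t=0$: $u_0=0$ and $\Phi(0)=0\notin(0,c^*)$, so $\gamma(0)=0\in C_0$.
\item At $t=1$: $u_R\in A$, so $\Phi(u_R)\le0$ and hence $\gamma(u_R)=u_R$. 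Moreover $I_s(u_R)=R^s>\rho^s$, so $u_R\in C_1$.
\item The map $\varphi$ is odd, and its image $\gamma(X)$ is compact, hence closed.
\end{itemize}
The piercing property $(i_7)$ then gives $i(\gamma(X)\cap\M_\rho)\ge i(A_0)\ge k+m-1$. Thus $i^*(X)\ge k+m-1$, and so $c_{k+m-1}^*\le\max_X\Phi<c^*$. Monotonicity $c_k^*\le\cdots\le c_{k+m-1}^*$ is immediate, because $\A_{j+1}^*\subset\A_j^*$.

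\emph{Steps 3 and 4: criticality and multiplicity.} Every $c_j^*$ lies in $(0,c^*)$, where $(PS)_c$ holds, so the standard equivariant deformation lemma applies. Suppose $c=c_j^*=\cdots=c_{j+l}^*$, and let $K_c$ be the set of critical points at level $c$. By $(PS)_c$, $K_c$ is compact and symmetric, and $0\notin K_c$ since $c>0$. Suppose for contradiction that $i(K_c)\le l$, which includes the case $K_c=\emptyset$. By $(i_4)$ choose a closed symmetric $\delta$-neighborhood $N$ of $K_c$ with $i(N)=i(K_c)$. Choose $\varepsilon$ with $0<c-\varepsilon<c+\varepsilon<c^*$. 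The deformation lemma gives an odd homeomorphism $\eta$ that is the identity outside $\Phi^{-1}(c-2\varepsilon,c+2\varepsilon)$, so $\eta\in\Gamma$, and that maps $\Phi^{c+\varepsilon}\setminus \mathrm{int}\,N$ into $\Phi^{c-\varepsilon}$.

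Take $M\in\A_{j+l}^*$ with $\max_M\Phi\le c+\varepsilon$. The key property of the pseudo-index is
\[
i^*\bigl(\overline{M\setminus N}\bigr)\ge i^*(M)-i(N).
\]
To see this, note that for $\gamma\in\Gamma$ we have $\gamma(M)\cap\M_\rho\subset\bigl(\gamma(\overline{M\setminus N})\cap\M_\rho\bigr)\cup\gamma(N)$. Then use $(i_5)$ together with $i(\gamma(N))=i(N)$, which holds by $(i_2)$ because $\gamma$ is an odd homeomorphism.

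Since $\Gamma$ is a group, $i^*(\eta(M'))=i^*(M')$ for every $M'$. Applying this with $M'=\overline{M\setminus N}$, the compact set $\eta\bigl(\overline{M\setminus N}\bigr)$ has pseudo-index at least $j+l-l=j$. It also satisfies $\max\Phi\le c-\varepsilon$, which contradicts the definition of $c=c_j^*$. Hence $i(K_c)\ge l+1\ge1$. This shows each $c_j^*$ is critical, and that coinciding levels carry infinitely many pairs of critical points (an index at least $2$ forces $K_c$ to be infinite). So $m$ distinct pairs are obtained.

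The main obstacle I expect is the closedness and compactness bookkeeping: using $\overline{M\setminus N}$ rather than $M\setminus N$, checking that subadditivity is applied only to closed sets, and, in Step 1, that $\M_\rho\setminus B$ is handled correctly by monotonicity alone. The piercing step also needs care with $C_0$ and $C_1$: the map $\gamma$ may move $\M_\rho$, which is why $\varphi$ must be taken to land in $C_0\cup C_1=W$ with its endpoints controlled as above.
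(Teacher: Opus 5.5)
Your proof is correct and follows the same route as the source this result is quoted from; the paper itself gives no proof and only cites \cite{MePe1}. That route is: $c_k^*>0$ from $i(\M_\rho\setminus B)=i(\M\setminus B_0)\le k-1$, then $c_{k+m-1}^*<c^*$ from the piercing property applied to $\varphi(u,t)=\gamma(u_{Rt})$ with $C_0=\{I_s\le\rho^s\}$ and $C_1=\{I_s\ge\rho^s\}$, then the standard Benci pseudo-index deformation argument for criticality and multiplicity. The only slip is minor: to guarantee $\eta\in\Gamma$, choose $\varepsilon$ with $0<c-2\varepsilon<c+2\varepsilon<c^*$, since the condition with $\varepsilon$ in place of $2\varepsilon$ is not enough.
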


\begin{corollary}\label{m2_corollary}
    Let $A_0$ be a compact symmetric subset of $\M$ with $i(A_0)=m\geq1$, let $R>\rho>0$, and let
    \begin{eqnarray*}
        A=\set{u_R:u\in A_0},\quad X=\set{u_t:u\in A_0,0\leq t\leq R}.
    \end{eqnarray*}
    Assume that
    \begin{eqnarray*}
        \sup_{u\in A}\Phi(u)\leq0<\inf_{u\in\M_\rho}\Phi(u),\quad\sup_{u\in X}\Phi(u)<c^*.
    \end{eqnarray*}
    Then $\Phi$ has $m$ distinct pairs of critical points at levels in $(0,c^*)$.
\end{corollary}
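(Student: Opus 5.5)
The plan is to deduce Corollary~\ref{m2_corollary} directly from Theorem~\ref{m1}, taking $k=1$, with $B_0=\M$ and the given $A_0$.

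\textbf{Choice of data.} Set $B_0=\M$. It is closed, since it is the level set $\{I_s=1\}$ of a continuous functional. It is symmetric, since $I_s$ is even. Then $\M\setminus B_0=\emptyset$, so $i(\M\setminus B_0)=0=k-1$ by definiteness $(i_1)$. Also $i(A_0)=m=k+m-1$, so both conditions in \eqref{m2} hold.

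\textbf{Identifying $B$.} With this choice,
\[
B=\set{u_\rho:u\in\M}=\M_\rho ,
\]
which is exactly the identity recorded in the definition of $\M_\rho$. I would double-check it via the scaling property: for $u\in\M$ we have $I_s(u_\rho)=\rho^s I_s(u)=\rho^s$. Conversely, if $I_s(v)=\rho^s$, then $v=(v_{1/\rho})_\rho$ by $(H_1)$ and $(H_3)$, and $I_s(v_{1/\rho})=\rho^{-s}\rho^s=1$, so $v_{1/\rho}\in\M$.

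\textbf{Checking \eqref{m1-2}.} With $B=\M_\rho$, the hypotheses of the corollary are precisely \eqref{m1-2}. The sets $A$ and $X$ are defined exactly as in Theorem~\ref{m1}, and $R>\rho>0$ is given.

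\textbf{Conclusion.} Theorem~\ref{m1} then yields critical values $0<c_1^*\le\cdots\le c_m^*<c^*$ and $m$ distinct pairs of associated critical points. Each of these critical points lies at a level in $(0,c^*)$, which is the claim.

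\textbf{Main obstacle.} The argument is essentially bookkeeping. The only point I expect to need care is that the theorem permits $B_0=\M$, that is, that the empty set causes no degeneracy in \eqref{m2}. It does not, because the condition is an upper bound on the index and $i(\emptyset)=0$.
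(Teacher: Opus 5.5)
Your proposal is correct and matches the intended derivation: the paper gives no separate proof, treating the corollary as the specialization of Theorem~\ref{m1} with $k=1$ and $B_0=\M$. With that choice, $i(\M\setminus B_0)=i(\emptyset)=0$ and $B=\set{u_\rho:u\in\M}=\M_\rho$, so the hypotheses reduce to \eqref{m1-2}, exactly as you argue.
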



\section{Superposed Coulomb-Sobolev spaces}
This section is devoted to the construction and study of the energy space associated with the problem \eqref{superposition problem}. 

\subsection{Definition of Superposed Coulomb-Sobolev spaces}
We start by defining the {\sl Superposed Coulomb spaces}. From now on we will fix $a<b$ with $[a,b]\subset(0,N)$ and consider a finite positive Borel measure $\mu$ on the interval $(0,N)$ such that $\mu(E)=0$ for any measurable set $E\subset (0,N)\backslash [a,b]$. Let us also fix $\gamma>0$. 
 For each $\beta\in(0,N)$ we define
    \begin{eqnarray}\label{beta}
        q(\beta)=1+\dfrac{\beta+2}{2\gamma}.
    \end{eqnarray}
    We also define 
    \begin{eqnarray*}
        \rho(u):=\int_{(0,N)}\int_{\R^N}\left|I_{\beta/2}\star|u|^{q(\beta)}\right|^2dxd\mu(\beta).
    \end{eqnarray*}
    \begin{remark}
        Notice that by the semigroup property of the Riez potential, one also has
        \begin{eqnarray}\label{s2}
        \rho(u)=\int_{(0,N)}\iint_{\R^N\times\R^N}\dfrac{A_\beta|u(x)|^{q(\beta)}|u(y)|^{q(\beta)}}{|x-y|^{N-\beta}}dxdyd\mu(\beta).
        \end{eqnarray}
    \end{remark}

\begin{definition}[Superposed Coulomb spaces]\label{def.coul.space}
    Let $N\in\N$, $\gamma>0$ and $\mu$ be a positive Borel measure on $(0,N)$. We define the {\sl superposed Coulomb space} $Q_\gamma(\R^N)$, with respect to $\gamma$ and the measure $\mu$, as the set of all measurable functions $u:\R^N\to\R$ such that $\rho(\lambda u)<+\infty$
    for some $\lambda=\lambda(u)>0$.
\end{definition}
Let us define the map $\|\cdot\|_{Q_\gamma(\R^N)}: Q_{\gamma}(\R^N)\to\R$ given by
\begin{eqnarray}\label{normQ}
    \|u\|_{Q_{\gamma}(\R^N)}:=\inf\set{\lambda>0:\rho(u/\lambda)\leq1}.
\end{eqnarray}
\begin{remark}\label{convexity1}
 It is easy to check that $\rho$ is a convex map.
\end{remark}
\begin{proposition}
    The functional $\|\cdot\|_{Q_{\gamma}(\R^N)}$ defines a norm on $Q_{\gamma}(\R^N)$.
\end{proposition}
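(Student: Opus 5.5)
The plan is to treat $\rho$ as a convex modular and run the standard Luxemburg-norm argument. Three properties of $\rho$ are needed. First, $\rho$ is even, $\rho(0)=0$, and $\rho\ge 0$. Second, $\rho$ is convex (Remark \ref{convexity1}). Third, $\rho(u)=0$ forces $u=0$ a.e. Behind the third property I would also use the monotone scaling in $t$: for $t\ge0$,
\[
\rho(tu)=\int_{(0,N)} t^{2q(\beta)}\int_{\R^N}\left|I_{\beta/2}\star|u|^{q(\beta)}\right|^2dx\,d\mu(\beta),
\]
and $2q(\beta)>2$, so $t\mapsto\rho(tu)$ is nondecreasing. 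Moreover, for $0\le t\le1$ we have $\rho(tu)\le t^{2q(a)}\rho(u)\le t\,\rho(u)$, since $\mu$ is concentrated on $[a,b]$; convexity together with $\rho(0)=0$ would also give $\rho(tu)\le t\rho(u)$ directly.

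Step 1: finiteness and homogeneity. For $u\in Q_\gamma(\R^N)$ there is $\lambda_0$ with $\rho(\lambda_0u)<\infty$. For $s$ large put $t=1/(s\lambda_0)\le1$; then $\rho(u/s)=\rho(t\lambda_0u)\le t^{2q(a)}\rho(\lambda_0u)\to0$. So the set in \eqref{normQ} is nonempty and the infimum is finite and nonnegative. Homogeneity $\|cu\|_{Q_\gamma}=|c|\,\|u\|_{Q_\gamma}$ follows from evenness and a change of variable $\lambda\mapsto\lambda/|c|$ in the infimum, with the case $c=0$ trivial.

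Step 2: the triangle inequality. Let $\lambda_1>\|u\|$ and $\lambda_2>\|v\|$. By monotonicity in $t$, $\rho(u/\lambda_1)\le1$ and $\rho(v/\lambda_2)\le1$. Write
\[
\frac{u+v}{\lambda_1+\lambda_2}=\frac{\lambda_1}{\lambda_1+\lambda_2}\frac{u}{\lambda_1}+\frac{\lambda_2}{\lambda_1+\lambda_2}\frac{v}{\lambda_2}.
\]
Convexity then gives $\rho\big((u+v)/(\lambda_1+\lambda_2)\big)\le1$. This also shows that $Q_\gamma(\R^N)$ is a vector space. Letting $\lambda_i$ decrease to the respective norms gives the inequality.

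Step 3: definiteness, which I expect to be the main point. Suppose $\|u\|=0$. Then $\rho(u/\lambda)\le1$ for all $\lambda>0$, hence $\rho(nu)\le1$ for every $n$. Using the bound from Step 1 with $t=1/n$,
\[
\int\!\!\int_{\R^N}\left|I_{\beta/2}\star|u|^{q(\beta)}\right|^2dx\,d\mu\le n^{-2q(a)}\rho(nu)\to0,
\]
so $\rho(u)=0$. Since $\mu$ is a nonzero finite measure, the inner integral vanishes for $\mu$-a.e. $\beta$, and in particular for some $\beta\in[a,b]$. For that $\beta$, $I_{\beta/2}\star|u|^{q(\beta)}=0$ a.e. Because the kernel $I_{\beta/2}$ is strictly positive everywhere and $|u|^{q(\beta)}\ge0$, this forces $|u|=0$ a.e.: if $|u|>0$ on a set of positive measure, the convolution would be strictly positive, possibly $+\infty$, everywhere. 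Equivalently, the double-integral form \eqref{s2} has a positive integrand and vanishes only when $u=0$ a.e. Here the care is needed: one must handle $\mu$-null exceptional sets and identify $Q_\gamma(\R^N)$ modulo a.e. equality.
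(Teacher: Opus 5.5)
Your proposal is correct and follows essentially the same route as the paper: definiteness via $\rho(u)\le\lambda^{2q(a)}\rho(u/\lambda)$ for $\lambda<1$ together with positivity of the Riesz kernel, homogeneity by rescaling the infimum, and the triangle inequality via convexity of $\rho$ with $\lambda=\lambda_u+\lambda_v$. You also make explicit several points the paper leaves implicit: finiteness of the infimum, closure of $Q_\gamma(\R^N)$ under addition, the monotonicity of $t\mapsto\rho(tu)$ needed to get $\rho(u/\lambda)\le 1$ for every $\lambda>\|u\|_{Q_\gamma}$, and the requirement that $\mu\neq 0$.
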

\begin{proof}
    First we prove that $\normq{u}=0$ if and only if $u=0$. If $u=0$, then it is clear that $\normq{u}=0$. Conversely, if $\normq{u}=0$, we can take a sequence $\lambda_n\searrow0$ with
    \[
    \rho(u/\lambda_n)\leq1
    \]
    which implies that
    \[
    \dfrac{\rho(u)}{\lambda_n^{2q(a)}}\leq\rho(u/\lambda_n)\leq1
    \]
    for all $n\in\N$ such that $\lambda_n<1$ by $\eqref{beta}$. But this is only possible if $\rho(u)=0$. Consequently, $\om|I_{\beta/2}\star|u|^{q(\beta)}|^2dx=0$ for almost every $\beta\in(0,N)$. This implies there exists a $\beta$ such that 
    \[
    \int_{\R^N\times\R^N}\dfrac{|u(x)|^{q(\beta)}|u(y)|^{q(\beta)}}{|x-y|^{N-\beta}}dxdy=0,
    \]
    by \eqref{s2}. Consequently $u(x)=0$, a.e. on $\R^N$.
Now we will prove that $\normq{c u}=|c|\normq{u}$ for all $c\in\R$. We have
    \begin{eqnarray*}
        \normq{cu}&=&\inf\set{\lambda>0:\rho(|c|u/\lambda)\leq1}\\
        &=&|c|\inf\set{\lambda/|c|>0:\rho(u/(\lambda/|c|))\leq1}=|c|\normq{u}.
    \end{eqnarray*}
    Finally, we will prove that $\normq{u+g}\leq\normq{u}+\normq{v}$ for all $u,v\in Q_{\gamma}(\R^N)$. Let $\lambda_u>\normq{u}$ and $\lambda_v>\normq{v}$, then $\rho(u/\lambda_u)\leq1$ and $\rho(v/\lambda_v)\leq1$. Let $\lambda=\lambda_u+\lambda_v$, then
    \[
 \rho\left(\frac{u+v}{\lambda}\right)=\rho\seq{\frac{\lambda_u}{\lambda}\frac{u}{\lambda_u}+\frac{\lambda_v}{\lambda}\frac{v}{\lambda_v}}\leq\frac{\lambda_u}{\lambda}\rho\seq{\frac{u}{\lambda_u}}+\frac{\lambda_v}{\lambda}\rho\seq{\frac{u}{\lambda_v}}\leq1,
    \]
    where the first inequality is due to the convexity of $\rho$. This shows that
    \[
    \normq{u+v}\leq\lambda_u+\lambda_v.
    \]
   After taking the infimum on the right hand side of the above inequality we conclude the proof.
    
\end{proof}
\begin{remark}
    Note that it easily follows from the definition of $\rho$ that 
\begin{eqnarray}\label{p1}
    \rho(\lambda u)=\rho(|\lambda |u)\leq |\lambda|\rho(u),&\quad\text{for all }\, |\lambda|\leq1\nonumber\\
    \rho(\lambda u)=\rho(|\lambda|u)\geq|\lambda|\rho(u),&\quad\text{for all }|\lambda|\geq1
\end{eqnarray}
\end{remark}

The following lemma is also known as the {\sl unit ball property}, see for example \cite[Chapter 2]{diening2011lebesgue}.

\begin{lemma}\label{ball property} The following hold.
   \begin{enumroman}
       \item\label{ball propety1} $\normq{u}\leq1$ and $\rho(u)\leq1$ are equivalent.
       \item\label{ball propety2}  $\normq{u}<1$ and $\rho(u)<1$ are equivalent.
       \item\label{ball propety3}  $\normq{u}=1$ and $\rho(u)=1$ are equivalent.
   \end{enumroman}
\end{lemma}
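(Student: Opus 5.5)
The plan is to prove the unit ball property from the definition \eqref{normQ}, the homogeneity bounds \eqref{p1}, and a continuity property of $t\mapsto\rho(tu)$. The key structural fact is that $\rho(tu)=\int_{(0,N)}t^{2q(\beta)}\int_{\R^N}|I_{\beta/2}\star|u|^{q(\beta)}|^2dx\,d\mu(\beta)$. Since $\mu$ is supported in $[a,b]$, the exponents satisfy $2q(a)\le 2q(\beta)\le 2q(b)$ $\mu$-a.e. Two consequences follow for $0<t\le1$ and $t\ge1$ respectively:
\[
t^{2q(b)}\rho(u)\le\rho(tu)\le t^{2q(a)}\rho(u),\qquad t^{2q(a)}\rho(u)\le\rho(tu)\le t^{2q(b)}\rho(u).
\]
Moreover, if $\rho(u)<\infty$, then $t\mapsto\rho(tu)$ is finite and continuous on $[0,\infty)$, by dominated convergence on bounded $t$-intervals. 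If $\rho(u)>0$, it is also strictly increasing.

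\emph{Part (i).} If $\rho(u)\le1$, then $\lambda=1$ is admissible in \eqref{normQ}, so $\normq{u}\le1$. Conversely, suppose $\normq{u}\le1$. Then for every $\lambda>1$ there is an admissible $\lambda'\le\lambda$, and monotonicity gives $\rho(u/\lambda)\le\rho(u/\lambda')\le1$. By the first bound above, $\rho(u)\le\lambda^{2q(b)}\rho(u/\lambda)\le\lambda^{2q(b)}$. Letting $\lambda\searrow1$ gives $\rho(u)\le1$. Note that this step needs no a priori finiteness of $\rho(u)$.

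\emph{Part (ii).} Suppose $\rho(u)<1$. If $\rho(u)=0$, then $u=0$ by the argument in the norm proposition, so $\normq{u}=0$. Otherwise, by continuity in $t$ there is $\lambda<1$ with $\rho(u/\lambda)\le\lambda^{-2q(b)}\rho(u)\le1$. Explicitly, $\lambda=\rho(u)^{1/(2q(b))}$ works, so $\normq{u}\le\lambda<1$. Conversely, suppose $\normq{u}<1$. Pick an admissible $\lambda<1$. By \eqref{p1}, or more sharply by the homogeneity bound, $\rho(u)=\rho(\lambda\cdot u/\lambda)\le\lambda^{2q(a)}\rho(u/\lambda)\le\lambda^{2q(a)}<1$.

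\emph{Part (iii).} This follows by combining (i) and (ii): $\normq{u}=1$ means $\normq{u}\le1$ and not $\normq{u}<1$, which by (i) and (ii) is exactly $\rho(u)\le1$ and not $\rho(u)<1$, i.e. $\rho(u)=1$.

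The only delicate point is the comparison $\rho(u)\le\lambda^{2q(b)}\rho(u/\lambda)$ for $\lambda>1$. This is where the support assumption $\mu((0,N)\setminus[a,b])=0$ is used, since without it the exponents $2q(\beta)$ would be unbounded and the limit $\lambda\searrow1$ could fail. I expect this step to be the main obstacle only in the sense of justifying the pointwise-in-$\beta$ scaling inside the $\mu$-integral; the scaling itself is immediate from $|tu|^{q(\beta)}=t^{q(\beta)}|u|^{q(\beta)}$.
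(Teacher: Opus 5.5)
Your proof is correct and follows the paper's argument. The differences are that you replace the appeal to continuity of $t\mapsto\rho(tu)$ with the explicit bounds $t^{2q(b)}\rho(u)\le\rho(tu)\le t^{2q(a)}\rho(u)$ for $0<t\le1$ (reversed for $t\ge1$), and that you deduce (iii) formally from (i) and (ii), which is cleaner than the paper's contradiction argument for that part. One small correction to your closing remark: since $\beta\in(0,N)$ already forces $1+\frac{1}{\gamma}<q(\beta)<1+\frac{N+2}{2\gamma}$, the exponents are bounded even without the support condition on $\mu$, so that assumption is not what makes the limit $\lambda\searrow1$ work.
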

\begin{proof}
    \ref{ball propety1} If $\normq{u}\leq1$, then $\rho(u/\lambda)\leq1$ for all $\lambda>1$, which implies that $\rho(u)\leq1$ since the map $t\mapsto\rho(tu)$ is continuous for each fixed $u\in\Q$. Suppose now that $\rho(u)\leq1$, then it is clear by the definition of $\normq{\cdot}$ that $\normq{u}\leq1$.

    \ref{ball propety2}
    If $\normq{u}<1$, then by its very definition, there exists $0<\lambda<1$ with $\rho(u/\lambda)\leq1$, consequently 
    \[
    \rho(u)=\rho(\lambda u/\lambda)\leq\lambda\rho(u/\lambda)\leq\lambda<1
    \]
    by \eqref{p1}. Suppose now that $\rho(u)<1$, then exists $\gamma>1$ with $\rho(\gamma u)<1$ since the map $t\mapsto\rho(tu)$ is continuous. Then it is clear that $\normq{\gamma u}\leq1$, which implies that $\normq{x}\leq1/\gamma<1$.

    \ref{ball propety3} Suppose that $\normq{u}=1$. This implies that $\rho(u)\geq1$. If it were $\rho(u)<1$, then by \ref{ball propety2} it would imply that $\normq{u}<1$. Consequently $\rho(u)\geq1$. If it were $\rho(u)>1$, then by continuity there would be a $\lambda<1$ with $\rho(u/\lambda)>1$, which implies that $\normq{u}<1$, and we conclude that $\rho(u)=1$. The fact that $\rho(u)=1$ implies $\normq{u}=1$ follows in the same way.
\end{proof}
\begin{corollary}\label{cor ball}
     For all $u\in\Q$ we have the following:
     \begin{enumroman}
         \item\label{cor ball1} If $\normq{u}\leq1$ then $\rho(x)\leq\normq{u}$.
         \item\label{cor ball2} If $1<\normq{u}$, then $\normq{u}\leq\rho(u)$.
         \item \label{cor ball3}$\normq{u}\leq\rho(u)+1$  $\forall u\in Q_\gamma(\R^N).$
     \end{enumroman}
\end{corollary}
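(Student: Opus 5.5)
The plan is to deduce all three items directly from the unit ball property (Lemma \ref{ball property}) combined with the scaling inequalities \eqref{p1}. The basic idea is to normalize $u$ by its norm. For $u\neq 0$, set $\lambda=\normq{u}>0$. Then $\normq{u/\lambda}=1$, and Lemma \ref{ball property}\ref{ball propety3} gives $\rho(u/\lambda)=1$. The case $u=0$ is trivial, since then $\rho(0)=0=\normq{0}$.

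For \ref{cor ball1}, suppose $0<\lambda=\normq{u}\le1$. Writing $u=\lambda\,(u/\lambda)$ and applying the first inequality in \eqref{p1} with the scalar $\lambda\le1$, we get
\[
\rho(u)=\rho\bigl(\lambda (u/\lambda)\bigr)\le \lambda\,\rho(u/\lambda)=\lambda=\normq{u}.
\]

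For \ref{cor ball2}, suppose $\lambda=\normq{u}>1$. The second inequality in \eqref{p1}, now with $\lambda\ge1$, gives
\[
\rho(u)=\rho\bigl(\lambda(u/\lambda)\bigr)\ge\lambda\,\rho(u/\lambda)=\lambda=\normq{u}.
\]

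For \ref{cor ball3}, split into the two cases. If $\normq{u}\le1$, then trivially $\normq{u}\le 1\le\rho(u)+1$, because $\rho\ge0$. If $\normq{u}>1$, then \ref{cor ball2} gives $\normq{u}\le\rho(u)\le\rho(u)+1$.

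The only delicate point is whether $\rho(u/\lambda)$ equals $1$ exactly, rather than merely satisfying an inequality. This is exactly what Lemma \ref{ball property}\ref{ball propety3} provides, so it rests on the continuity of $t\mapsto\rho(tu)$ used there. When $\rho(u)=+\infty$ the inequalities in \ref{cor ball2} and \ref{cor ball3} hold trivially.

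If I want to avoid relying on the exact equality, there is a fallback. In \ref{cor ball1}, one can instead take $\lambda'\downarrow\normq{u}$ with $\rho(u/\lambda')\le1$, which yields $\rho(u)\le\lambda'$, and then pass to the limit. In \ref{cor ball2}, one can take $1<\lambda'<\normq{u}$. Then $\rho(u/\lambda')>1$ by the definition of the infimum, so $\rho(u)\ge\lambda'\rho(u/\lambda')>\lambda'$, and letting $\lambda'\uparrow\normq{u}$ gives the claim.
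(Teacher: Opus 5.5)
Your proof is correct and follows the intended route. The paper gives no separate proof of this corollary; it treats it as an immediate consequence of Lemma \ref{ball property} (part \ref{ball propety3} yields $\rho(u/\normq{u})=1$) together with the scaling inequalities \eqref{p1}, which is exactly your normalization argument. Two small refinements: the case $\rho(u)=+\infty$ never arises, because $\rho(tu)\le\max\{t^{2q(a)},t^{2q(b)}\}\rho(u)$ for $t>0$ shows that $\rho$ is finite on all of $\Q$; and in your fallback for \ref{cor ball1} you need $\lambda'\le1$ to invoke \eqref{p1}, so the case $\normq{u}=1$ should be handled directly by Lemma \ref{ball property}\ref{ball propety1}.
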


\begin{lemma}\label{lem ball0}
    For every $\varepsilon>0$ there exists $\delta=\delta(\varepsilon)>0$ such that $\rho(u)\leq\delta$ implies $\normq{u}\leq\varepsilon.$
\end{lemma}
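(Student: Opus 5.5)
The plan is to use the homogeneity-type bounds for $\rho$ that come from the exponents $2q(\beta)$, with $\beta$ ranging over $[a,b]$ $\mu$-a.e. For $0<t\le 1$ and $\beta\in[a,b]$ we have $t^{2q(\beta)}\ge t^{2q(b)}$. Hence, directly from the definition of $\rho$ as an integral of $t^{2q(\beta)}\int_{\R^N}|I_{\beta/2}\star|u|^{q(\beta)}|^2dx$ against $\mu$, we get
\[
\rho(u/\lambda)\le \lambda^{-2q(b)}\rho(u)\quad\text{for all }\lambda\ge1 .
\]
This is the same computation used in the proof that $\normq{\cdot}$ is a norm, but with $q(b)$ in place of $q(a)$, because now the scaling factor is at most one rather than at least one. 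The inequality holds because $q$ is increasing in $\beta$ and $\mu$ is concentrated on $[a,b]$.

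Fix $\varepsilon>0$. If $\varepsilon\ge1$, take $\delta=1$: then $\rho(u)\le1$ gives $\normq{u}\le1\le\varepsilon$ by Lemma \ref{ball property}\ref{ball propety1}.

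If $\varepsilon<1$, set $\lambda=\varepsilon<1$ and look for $\delta$ with $\rho(u/\varepsilon)\le1$ whenever $\rho(u)\le\delta$. Write $u/\varepsilon=t^{-1}u$ with $t=\varepsilon\in(0,1)$. Since $\varepsilon^{-2q(\beta)}\le \varepsilon^{-2q(b)}$ for $\beta\le b$, the pointwise-in-$\beta$ scaling gives
\[
\rho(u/\varepsilon)\le \varepsilon^{-2q(b)}\rho(u).
\]
Choosing $\delta=\varepsilon^{2q(b)}$ then yields $\rho(u/\varepsilon)\le1$. By the definition of the Luxemburg norm \eqref{normQ}, this gives $\normq{u}\le\varepsilon$.

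The only point needing care is justifying the scaling inequality inside the $\mu$-integral. This requires that $\mu$ charges only $[a,b]$, so that $q(\beta)\le q(b)<\infty$ uniformly, and that $q$ is increasing. It also requires the exact identity $\int_{\R^N}|I_{\beta/2}\star|su|^{q(\beta)}|^2dx=s^{2q(\beta)}\int_{\R^N}|I_{\beta/2}\star|u|^{q(\beta)}|^2dx$ for $s>0$. Both are immediate from the definitions, so I expect no real obstacle. The one thing to avoid is using \eqref{p1}, which only gives linear control in the wrong direction.
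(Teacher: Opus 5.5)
Your proof is correct and is essentially the paper's argument: the paper iterates the doubling bound $\rho(2u)\le 2^{2q(b)}\rho(u)$ $j$ times with $2^{-j}\le\varepsilon$ and takes $\delta=2^{-2q(b)j}$, while you apply the same scaling bound once with factor $1/\varepsilon$ and get $\delta=\varepsilon^{2q(b)}$ directly. One slip: your first display should be stated for $0<\lambda\le 1$, not $\lambda\ge 1$ (for $\lambda>1$ the inequality goes the other way), but your actual step for $\varepsilon<1$ uses the correct version.
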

\begin{proof}
    For $\varepsilon>0$ choose $j\in\N$ with $2^{-j}\leq\varepsilon$. Note that
    \[
    \rho(2u)\leq K\rho(u),
    \]
    where $K=2^{2q(b)}$. Then
    \[
    \rho(2^ju)\leq K^j\rho(u).
    \]
    If we choose $\delta=K^{-j}$, the above inequality gives us
    \[
    \rho(2^ju)\leq1
    \]
    whenever $\rho(u)\leq\delta$, which implies that $\normq{2^ju}\leq1$ by Lemma \ref{ball property} \ref{ball propety1}, which leads to
    \[
    \normq{u}\leq 2^{-j}\leq\varepsilon.
    \]
\end{proof}
\begin{lemma}\label{lem ball}
    For every $\varepsilon>0$ there exists $\delta=\delta(\varepsilon)>0$ such that $\rho(u)\leq1-\varepsilon$ implies $\normq{u}\leq1-\delta$ for $u\in\Q$.
\end{lemma}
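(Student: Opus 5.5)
The argument will be a direct one, using the lower scaling bound of $\rho$ for dilations larger than one. The modular is built from the powers $q(\beta)\ge q(a)>1$, so we can quantify how much room $\rho(u)\le 1-\varepsilon$ leaves before the unit level is reached.

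The key inequality is the analogue of \eqref{p1} with the exponent made explicit. For $t\ge1$ and every $\beta\in[a,b]$ we have $t^{2q(\beta)}\ge t^{2q(a)}$. The measure $\mu$ is concentrated on $[a,b]$, and the integrand scales as $\bigl|I_{\beta/2}\star|tu|^{q(\beta)}\bigr|^2=t^{2q(\beta)}\bigl|I_{\beta/2}\star|u|^{q(\beta)}\bigr|^2$. Hence
\[
\rho(tu)\ge t^{2q(a)}\rho(u)\quad (t\ge1),\qquad \rho(tu)\le t^{2q(a)}\rho(u)\quad(0\le t\le1).
\]
This is the same computation already used in the proof that $\normq{\cdot}$ is a norm.

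Now suppose $\rho(u)\le1-\varepsilon$. We want $\lambda<1$, depending only on $\varepsilon$, with $\rho(u/\lambda)\le1$. The lower bound goes the wrong way for this, so we instead use the upper bound with exponent $2q(b)$: for $t\ge1$,
\[
\rho(tu)\le t^{2q(b)}\rho(u),
\]
since $t^{2q(\beta)}\le t^{2q(b)}$ on $[a,b]$. Define $t_\varepsilon>1$ by $t_\varepsilon^{2q(b)}(1-\varepsilon)=1$, that is, $t_\varepsilon=(1-\varepsilon)^{-1/(2q(b))}$. Then $\rho(t_\varepsilon u)\le1$, and Lemma \ref{ball property}\ref{ball propety1} gives $\normq{t_\varepsilon u}\le1$. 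By homogeneity of the norm,
\[
\normq{u}\le 1/t_\varepsilon=(1-\varepsilon)^{1/(2q(b))}=:1-\delta,
\]
where $\delta=1-(1-\varepsilon)^{1/(2q(b))}>0$ depends only on $\varepsilon$, $\gamma$ and $b$. If $\varepsilon\ge1$, then $\rho(u)\le 1-\varepsilon\le 0$ forces $u=0$ and the statement is trivial, so we may take $\varepsilon\in(0,1)$.

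There is no real obstacle here. The only point needing care is the uniformity in $\beta$: it relies on $\mu$ being supported in the compact interval $[a,b]$, which makes $q(b)$ a finite uniform upper bound for the exponents. This step mirrors the argument of Lemma \ref{lem ball0}, with the doubling constant $K$ replaced by the explicit factor $t^{2q(b)}$.
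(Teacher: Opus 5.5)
Your proof is correct, but it takes a more direct route than the paper.

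The paper uses the standard modular-space argument. For a dilation factor $\tau\in(1,2]$ (called $\gamma$ there) it writes $\tau u=(\tau-1)\,2u+(2-\tau)\,u$. Convexity of $\rho$ together with the doubling bound $\rho(2u)\le K\rho(u)$, $K=2^{2q(b)}$, then gives $\rho(\tau u)\le\bigl(1+(\tau-1)(K-1)\bigr)\rho(u)$. Finally it takes $\tau$ close to $1$, so that this is at most $1$ whenever $\rho(u)\le 1-\varepsilon$. The coefficient printed in the paper, $K(1-\gamma)$, has the wrong sign, but the estimate above is the intended one.

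You instead apply the power bound $\rho(tu)\le t^{2q(b)}\rho(u)$ for every $t\ge1$, which follows from $\mu$ being concentrated on $[a,b]$. This avoids convexity and the interpolation between $u$ and $2u$, and it gives the explicit value $\delta=1-(1-\varepsilon)^{1/(2q(b))}$.

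The paper's argument works for any convex modular satisfying only a $\Delta_2$-type doubling condition. Yours exploits the power structure of $\rho$ directly (which is where the constant $K$ comes from anyway), so it is shorter and fully explicit. Handling $\varepsilon\ge1$ separately is fine. The paper never needs to, because its factor $\tau$ stays in $(1,2]$.
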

\begin{proof}
    Let $\varepsilon>0$ and suppose that $\rho(u)\leq1-\varepsilon$. Let $\gamma\in(1,2]$, then
    \begin{eqnarray*}
        \rho(\gamma u)=\rho((\gamma-1)2u+(2-\gamma)u)\leq(\gamma-1)\rho(2u)+(2-\gamma)\rho(u),
    \end{eqnarray*}
    by the convexity of $\rho$. Then
    \begin{eqnarray}\label{lem ball1}
        \rho(\gamma u)\leq(2-\gamma+ K(1-\gamma))\rho(u) \leq(1+K(1-\gamma))\rho(u)
         \end{eqnarray}
    since $\rho(2u)\leq K\rho(u)$ with $K=2^{2q(b)}$. Now choose $\gamma$ such that the right hand side of \eqref{lem ball1} is bounded by one. Then $\rho(\gamma u)\leq1$, and the unit ball property (Lemma \ref{ball property} \ref{ball propety1}) implies $\normq{\gamma u}\leq1$. Consequently $\normq{u}\leq 1-\delta$, where $\delta>0$ is defined by the equation $1-\delta=1/\gamma$.
    \end{proof}

\begin{lemma}\label{conver1}
    $\|u_n-u\|_{Q_\gamma}\to0$ if and only if $\rho(u_n-u)\to0$.
\end{lemma}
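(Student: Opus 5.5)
Set $w_n=u_n-u$. Both directions follow from the modular–norm comparisons already established, namely Corollary \ref{cor ball} and Lemma \ref{lem ball0}, applied to the sequence $w_n$.

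For the direction $\normq{w_n}\to0\Rightarrow\rho(w_n)\to0$, we use Corollary \ref{cor ball}\ref{cor ball1}. For $n$ large we have $\normq{w_n}\le1$, and then $\rho(w_n)\le\normq{w_n}\to0$. If one wants to avoid the corollary, argue directly. Take $\lambda_n$ with $\normq{w_n}<\lambda_n\le 1$ and $\lambda_n\to0$. By the definition of the norm and Lemma \ref{ball property}, $\rho(w_n/\lambda_n)\le1$. The inequality \eqref{p1} with the scalar $\lambda_n\le1$ then gives
\[
\rho(w_n)=\rho\big(\lambda_n(w_n/\lambda_n)\big)\le\lambda_n\,\rho(w_n/\lambda_n)\le\lambda_n\to0 .
\]

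For the converse, suppose $\rho(w_n)\to0$ and fix $\varepsilon>0$. Lemma \ref{lem ball0} supplies $\delta(\varepsilon)>0$ such that $\rho(w)\le\delta$ implies $\normq{w}\le\varepsilon$. Since $\rho(w_n)\le\delta$ for all large $n$, we get $\normq{w_n}\le\varepsilon$ eventually, so $\normq{w_n}\to0$.

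The only delicate point is the $\Delta_2$-type estimate $\rho(2u)\le 2^{2q(b)}\rho(u)$ behind Lemma \ref{lem ball0}. It holds because $q(\beta)\le q(b)$ on the support $[a,b]$ of $\mu$, so $|2u|^{q(\beta)}\le 2^{q(b)}|u|^{q(\beta)}$ and the squared Riesz term scales by at most $2^{2q(b)}$. Since that lemma is already established, no real obstacle remains. We should also check that $w_n\in\Q$, which holds because $\Q$ is a vector space, as shown by the norm proposition.
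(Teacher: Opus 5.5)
Your proof is correct and is essentially the paper's argument. Both directions combine the unit ball property (Lemma \ref{ball property}) with the power-type scaling of $\rho$, and for the converse the paper's ``same reasoning'' is exactly the upper bound through $q(b)$ that Lemma \ref{lem ball0} packages. The only cosmetic difference is in the forward direction: the paper extracts the sharper estimate $\rho(u_n-u)<\varepsilon^{2q(a)}$ from $q(\beta)\ge q(a)$, whereas you use the convexity bound $\rho(w_n)\le\normq{w_n}$ of Corollary \ref{cor ball}, which works just as well.
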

\begin{proof}
Suppose $\|u_n-u\|_{Q_\gamma}\to0$. Let $0<\varepsilon<1$ and $N_0\in\N$ such that $\|u_n-u\|_{Q_\gamma}<\varepsilon$ for all $n>N_0$. This implies that $\|(u_n-u)/\varepsilon\|_{Q_\gamma}<1$ for all $n>N_0$. Then $\rho((u_n-u)/\varepsilon)<1$ for all $n>N_0$ by Lemma \ref{ball property}. Consequently $\rho(u_n-u)/\varepsilon^{2q(a)}\leq\rho((u_n-u)/\varepsilon)<1$, and this implies that $\rho(u_n-u)<\varepsilon^{2q(a)}$ for all $n>N_0$, which shows that $\rho(u_n-u)\to0$ as $n\to\infty$.

Suppose now that $\rho(u_n-u)\to0$. Then following the same reasoning as above together with Lemma \ref{ball property} once again, it is easy to see that this implies that $\|u_n-u\|_{Q_\gamma}\to0$.
\end{proof}

We now define the {\sl Superposed Coulomb-Sobolev spaces}. 
\begin{definition}
    We define the {\sl Superposed Coulomb-Sobolev} space $\x$ as the intersection of $\Q$ and $\mathcal{D}^{1,2}(\R^N)$, i.e.,
\begin{eqnarray*}
    \x:=\Q\cap\mathcal{D}^{1,2}(\R^N),
\end{eqnarray*}
with the norm
\begin{eqnarray}\label{normX}
    \norm{u}:=\|\nabla u\|_{L^2}+\normq{u}.
\end{eqnarray}
\end{definition}

\subsection{Completeness of Superposed Coulomb-Sobolev spaces}
In order to prove that $\x$ is a Banach space, we need first to have some information about the pointwise convergence of Cauchy sequences in $\x$. For that, we will need the following lemma.

\begin{theorem}\label{emb}
    There exists a constant $K=K(N,\mu)>0$ such that
    \begin{eqnarray*}
        \om|u|^{q_\gamma}dx\leq K\normq{u}^{q_\gamma-2}\seq{\|\nabla u\|^2_{L^2(\R^N)}+\normq{u}^2}
    \end{eqnarray*}
    for all $u\in\x$.
\end{theorem}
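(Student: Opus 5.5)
By homogeneity, it suffices to prove the inequality for a single $\beta$ with a constant uniform in $\beta$, and then to integrate in $\beta$ against $\mu$.

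\emph{Reduction to $\normq{u}=1$.} Both sides of the inequality are homogeneous of degree $q_\gamma$ under $u\mapsto cu$, since $\normq{cu}=|c|\normq{u}$. So we may assume $\normq{u}=1$, that is, $\rho(u)=1$ by Lemma \ref{ball property}\ref{ball propety3}. Moreover $\normq{u}=0$ forces $u=0$. It then suffices to prove
\[
\om|u|^{q_\gamma}dx\leq K\seq{\|\nabla u\|_{L^2}^2+\rho(u)}.
\]

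\emph{Scaling check.} Set $u_t(x)=t^{\gamma}u(tx)$ and $D_\beta(u):=\om\left|I_{\beta/2}\star|u|^{q(\beta)}\right|^2dx$. A direct computation shows that $\|\nabla u_t\|_2^2$, $\om|u_t|^{q_\gamma}$ and $D_\beta(u_t)$ all scale like $t^{2\gamma+2-N}$, for every $\beta$. Moreover $q_\gamma=2+2/\gamma$ is exactly the endpoint exponent $\frac{2(2q(\beta)+\beta)}{2+\beta}$ of the Coulomb--Sobolev inequality for the pair $(q(\beta),\beta)$. So for each fixed $\beta$ we expect the single-kernel inequality of \cite{MR3568051} (Ruiz's inequality when $N=3$, $\beta=2$):
\[
\om|u|^{q_\gamma}dx\le C_\beta\seq{\om|\nabla u|^2dx}^{\theta}D_\beta(u)^{1-\theta},\qquad \theta=\frac{2q(\beta)-q_\gamma}{2q(\beta)-2}\in(0,1).
\]
The exponent $\theta$ is forced by matching amplitude homogeneity, and $\theta\in(0,1)$ because $2<q_\gamma<2q(\beta)$ since $\beta>0$. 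Young's inequality then gives $\om|u|^{q_\gamma}\le C_\beta\seq{\|\nabla u\|_2^2+D_\beta(u)}$.

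To prove the single-$\beta$ bound I would follow the duality route. Write $|u|^{q_\gamma}=|u|^{q(\beta)}|u|^{q_\gamma-q(\beta)}$ and use $\om fg\le \|I_{\beta/2}\star f\|_2\,\|(-\Delta)^{\beta/4}g\|_2$ with $f=|u|^{q(\beta)}$ and $g=|u|^{q_\gamma-q(\beta)}$. Then control $\|(-\Delta)^{\beta/4}g\|_2$ by interpolation between $L^2$ and $\dot H^1$ norms of $g$, followed by the chain rule and H\"older, closing with $\om|u|^{q_\gamma}$ reabsorbed on the left. Alternatively, I would invoke \cite[Theorem 1]{MR3568051} directly at this endpoint.

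\emph{Integration in $\beta$.} Suppose $\sup_{\beta\in[a,b]}C_\beta=:C<\infty$. Integrate the inequality $\om|u|^{q_\gamma}\le C\seq{\|\nabla u\|_2^2+D_\beta(u)}$ over $[a,b]$ against $\mu$, using $\mu((0,N)\setminus[a,b])=0$:
\[
\mu((0,N))\om|u|^{q_\gamma}\le C\seq{\mu((0,N))\|\nabla u\|_2^2+\rho(u)}.
\]
Since $0<\mu((0,N))<\infty$, this gives the claim with $K=C\max\{1,\mu((0,N))^{-1}\}$. If uniformity fails on all of $[a,b]$, it is enough to use any compact $E\subset[a,b]$ with $\mu(E)>0$ on which the constants are bounded. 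Integrating only over $E$ and bounding $\int_E D_\beta\,d\mu\le\rho(u)$ gives the same conclusion with $\mu(E)$ in place of $\mu((0,N))$.

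\emph{Main obstacle.} The hard step is the uniform control of $C_\beta$ at the endpoint exponent. For $\beta\le 2$ the interpolation $\|(-\Delta)^{\beta/4}g\|_2\le\|g\|_2^{1-\beta/2}\|\nabla g\|_2^{\beta/2}$ is explicit, and the resulting constants, including $A_\beta$, depend continuously on $\beta$. For $\beta>2$ this simple interpolation is unavailable. There I would first try to deduce the inequality for $\beta$ from the one for a smaller order via the semigroup identity $I_{\beta/2}=I_{\beta/2-\sigma}\star I_\sigma$. Failing that, I would rely on the proof in \cite{MR3568051}, checking that every constant there, together with $A_\beta$, is continuous in $\beta$ on the compact interval $[a,b]\subset(0,N)$, which gives $\sup_{[a,b]}C_\beta<\infty$.
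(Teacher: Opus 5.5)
Your main line is the paper's proof. You use the single-kernel endpoint inequality $\om|u|^{q_\gamma}dx\le C\bigl(\om|\nabla u|^2dx\bigr)^{\frac{\beta}{2+\beta}}\bigl(\om|I_{\beta/2}\star|u|^{q(\beta)}|^2dx\bigr)^{\frac{2}{2+\beta}}$ of \cite[Proposition 3.1]{MR3568051} (your $\theta$ equals $\beta/(2+\beta)$), then Young's inequality, integration against $\mu$, and the Luxemburg normalization. Your reduction to $\normq{u}=1$ by homogeneity is equivalent to the paper's choice of $\lambda$ with $\rho(u/\lambda)\le1$ followed by taking the infimum over $\lambda$.

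Two points need fixing.

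\emph{The duality route.} The route you list first closes only at $\beta=2$. That is Ruiz's case, where $g=|u|$ and $\|(-\Delta)^{1/2}g\|_2=\|\nabla g\|_2\le\|\nabla u\|_2$. For $\beta<2$ you have $g=|u|^{1+\frac{2-\beta}{2\gamma}}$, and the interpolation needs two quantities you cannot control.
\begin{itemize}
\item $\|g\|_2^2=\om|u|^{2+\frac{2-\beta}{\gamma}}dx$. This exponent lies strictly between $2$ and $q_\gamma$, so $\|\nabla u\|_2$, $D_\beta(u)$ and $\|u\|_{L^{q_\gamma}}$ do not control it. Widely separated bumps of amplitude $L^{-a}$ and width $L$, with $a$ suitably chosen, show this whenever the exponent is below $q_\gamma$.
\item $\|\nabla g\|_2^2\simeq\om|u|^{\frac{2-\beta}{\gamma}}|\nabla u|^2dx$. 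H\"older cannot bound this from $\|\nabla u\|_2$ and Lebesgue norms of $u$, because $|\nabla u|^2$ is only integrable.
\end{itemize}
Reabsorbing $\om|u|^{q_\gamma}dx$ does not help with either. For $\beta>2$ the exponent $q_\gamma-q(\beta)-1$ is negative, which is worse. So you must take the single-$\beta$ inequality from \cite{MR3568051}, as the paper does. Your claim that the constants are explicit and continuous for $\beta\le2$ is not supported by this route.

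\emph{Uniformity in $\beta$.} You are right that uniformity of $C_\beta$ is needed. The paper simply takes the constant of \cite[Proposition 3.1]{MR3568051} to depend only on $N$. Your fallback via a set $E$ of positive $\mu$-measure works, but you have to produce $E$. One way:
\begin{itemize}
\item For $u\in C_c^\infty(\R^N)\setminus\{0\}$, the quotient $\om|u|^{q_\gamma}dx/(\|\nabla u\|_2^2+D_\beta(u))$ is continuous in $\beta\in[a,b]$. This follows by dominated convergence, writing $D_\beta(u)$ as the double integral with kernel $A_\beta|x-y|^{\beta-N}$.
\item Let $C_\beta$ be the supremum of these quotients over such $u$. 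Then $\beta\mapsto C_\beta$ is lower semicontinuous, and it is finite by \cite[Proposition 3.1]{MR3568051}.
\item Hence the sets $E_n=\{\beta\in[a,b]:C_\beta\le n\}$ are compact and increase to $[a,b]$, so $\mu(E_n)>0$ for $n$ large.
\item Density of $C_c^\infty$ in $E^{\beta,q(\beta)}$ \cite[Proposition 2.6]{MR3568051}, together with Fatou, extends the inequality with constant $C_\beta$ to every $u\in\x$ with $D_\beta(u)<\infty$.
\end{itemize}
Integrating over such an $E_n$ then gives the theorem without assuming $\sup_{[a,b]}C_\beta<\infty$.
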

\begin{proof}
    Let $u\in\x$, then
    \[
    \int_{(0,N)}\om|I_{\beta/2}\star|u|^{q(\beta)}|^2dxd\mu(\beta)<\infty,
    \]
    which implies that
    \[
    \om|I_{\beta/2}\star|u|^{q(\beta)}|^2dx<\infty
    \]
    for $\mu$-almost every $\beta\in(0,N)$. By \cite[Proposition 3.1]{MR3568051} there exists a constant $C=C(N)>0$ such that
    \begin{eqnarray*}
        \om|u|^{2\frac{2q(\beta)+\beta}{2+\beta}}dx\leq C\seq{\om|\nabla u|^2dx}^{\frac{\beta}{2+\beta}}\seq{\om|I_{\beta/2}\star|u|^{q(\beta)}|^2dx}^{\frac{2}{2+\beta}},
    \end{eqnarray*}
    for almost every $\beta\in(0,N)$. Consequently
    \begin{eqnarray}\label{embedding1}
         \om|u|^{2\frac{2q(\beta)+\beta}{2+\beta}}dx\leq C\seq{\om|\nabla u|^2dx+\om|I_{\beta/2}\star|u|^{q(\beta)}|^2dx}
    \end{eqnarray}
    by Young's inequality. A direct computation shows that
    \[
    2\dfrac{2q(\beta)+\beta}{2+\beta}=2+\dfrac{2}{\gamma}=q_\gamma.
    \]
This together with \eqref{embedding1} gives

\begin{eqnarray*}
    \om|u|^{q_\gamma}dx\leq C\seq{\om|\nabla u|^2dx+\om\left|I_{\beta/2}\star|u|^{q(\beta)}\right|^2dx},
\end{eqnarray*}
and after integrating both sides of the above inequality with respect to $\beta$, one gets
\begin{eqnarray}\label{embedding2}
    \om|u|^{q_\gamma}dx\leq K\seq{\om|\nabla u|^2dx+\int_{(0,N)}\om\left|I_{\beta/2}\star|u|^{q(\beta)}\right|^2dx},
\end{eqnarray}
for all $u\in \x$ for some constant $K=K(N,\mu)>0$. Now let $\lambda>0$ such that
\[
\int_{(0,N)}\om\left|I_{\beta/2}\star\left|\frac{u}{\lambda}\right|^{q(\beta)}\right|^2dx\leq1,
\]
this together with \eqref{embedding2} gives
\[
\om|u|^{q_\gamma}dx\leq K\lambda^{q_{\gamma}-2}\seq{\om|\nabla u|^2dx+\lambda^2},
\]
which yields to 
\[
\om|u|^{q_\gamma}dx\leq K\normq{u}^{q_\gamma-2}\seq{\|\nabla u\|^2_{L^2(\R^N)}+\normq{u}^2}
\]
by the definition of the norm $\normq{\cdot}$ after taking the infimum on $\lambda$, and this concludes the proof.
\end{proof}

We will need the following lemma.
\begin{lemma}\label{a1}
    Suppose $\set{u_n}_{n\in\N}$ is a bounded sequence in $Q_\gamma(\R^N)$. Then given $\varepsilon>0$, there exists a measurable set $A\subset(0,N)$ with $\mu(A)<\varepsilon$ and a constant $C>0$ such that, for each $\beta\in(0,N)\backslash A$, there exists a subsequence of $\set{u_n}_{n\in\N}$, also denoted by $\set{u_n}_{n\in\N}$, such that 
    \[
    \int_{\R^N}\left|I_{\beta/2}\star|u_n|^{q(\beta)}\right|^2dx\leq C
    \]
    for all $n\in\N$.
\end{lemma}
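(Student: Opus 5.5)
Set $f_n(\beta):=\int_{\R^N}\left|I_{\beta/2}\star|u_n|^{q(\beta)}\right|^2dx\in[0,\infty]$, which is measurable in $\beta$ by Tonelli applied to the double-integral form \eqref{s2}. Since the sequence is bounded in $\Q$, fix $M\ge1$ with $\normq{u_n}\le M$ for all $n$.

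The first step is a uniform bound on $\rho(u_n)$. By Lemma \ref{ball property} \ref{ball propety1}, $\rho(u_n/M)\le1$. Since $\mu$ is concentrated on $[a,b]$ and $q$ is increasing, for $\mu$-a.e.\ $\beta$ we have
\[
f_n(\beta)=M^{2q(\beta)}\int_{\R^N}\left|I_{\beta/2}\star|u_n/M|^{q(\beta)}\right|^2dx\le M^{2q(b)}\int_{\R^N}\left|I_{\beta/2}\star|u_n/M|^{q(\beta)}\right|^2dx .
\]
Integrating in $\beta$ gives $\int_{(0,N)}f_n\,d\mu=\rho(u_n)\le M^{2q(b)}=:L$ for all $n$.

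The second step passes from this integral bound to a pointwise bound via Fatou's lemma applied to $\liminf_n f_n$:
\[
\int_{(0,N)}\liminf_{n\to\infty}f_n(\beta)\,d\mu(\beta)\le\liminf_{n\to\infty}\int_{(0,N)}f_n\,d\mu\le L .
\]
Set $C:=\max\{1,\,L/\varepsilon\}$ (any $C>L/\varepsilon$ would also do) and $A:=\{\beta\in(0,N):\liminf_n f_n(\beta)> C-1\}$. The set $A$ is measurable, and Chebyshev's inequality gives
\[
\mu(A)\le \frac{L}{C-1}.
\]
To get $\mu(A)<\varepsilon$, enlarge $C$ slightly, e.g.\ take $C=2L/\varepsilon+2$, so that $\mu(A)\le L/(2L/\varepsilon+1)<\varepsilon$.

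The third step extracts the subsequence. For $\beta\notin A$ we have $\liminf_n f_n(\beta)\le C-1<C$, so infinitely many indices satisfy $f_n(\beta)\le C$. These indices form the required subsequence, which depends on $\beta$, as the statement allows.

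The only delicate point is the first step, the uniform $\beta$-bound on the homogeneity factor. It relies on $\mu$ vanishing outside $[a,b]$, which gives $M^{2q(\beta)}\le M^{2q(b)}$ when $M\ge1$. Measurability of $f_n$ in $\beta$ is routine. An alternative would be to apply Chebyshev to each $f_n$ separately, but that yields sets depending on $n$; using the $\liminf$ avoids this.
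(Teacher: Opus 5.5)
Your proof is correct and follows the same route as the paper: a uniform bound on $\rho(u_n)$ from the unit ball property, Fatou's lemma applied to $\liminf_n f_n$, a Chebyshev estimate for the set where this $\liminf$ is large (the paper's sets $A_k=\{\beta:\liminf_n f_n(\beta)\ge k\}$, for which Fatou gives $\mu(A_k)\le C_2/k$), and extraction of a $\beta$-dependent subsequence off that set. You make the constants explicit ($L=M^{2q(b)}$, $C=2L/\varepsilon+2$); note only that the parenthetical ``any $C>L/\varepsilon$ would also do'' does not follow from your argument with threshold $C-1$ (you need $C>L/\varepsilon+1$), but your final choice of $C$ already satisfies this.
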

\begin{proof}
    Suppose $\sup_{n\in\N}\|u_n\|_{Q_\gamma}\leq C_1$ for some constant $C_1>0$. Then exists a constant $C_2>0$ such that
    \begin{eqnarray}\label{a1-1}
        \int_{(0,N)}\int_{\R^N}\left|I_{\beta/2}\star|u_n|^{q(\beta)}\right|^2dxd\mu\leq C_2
    \end{eqnarray}
    for all $n\in\N$, by Lemma \ref{ball property}. Let us define, for each $k\in\N$, the set
    \[
    A_k:=\set{\beta\in(0,N):\liminf_{n\to\infty}\int_{\R^N}\left|I_{\beta/2}\star|u_n|^{q(\beta)}\right|^2dx\geq k}.
    \]
    It follows that 
    \[
    \mu(A_k)\to0
    \]
    as $k\to\infty$, by Fatou's lemma together with \eqref{a1-1}. Let $k$ large enough so that $\mu(A_k)<\varepsilon$. Then, for any $\beta\in(0,N)\backslash A_k$, one has
    \[
    \liminf_{n\to\infty}\int_{\R^N}\left|I_{\beta/2}\star|u_n|^{q(\beta)}\right|^2dx<k.
    \]
    The conclusion follows by setting $A=A_k$.
\end{proof}

\begin{theorem}
    $\x$ is a Banach space.
\end{theorem}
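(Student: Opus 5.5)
To prove that $\x$ is complete, I will take a Cauchy sequence $\s{u_n}$ in $\x$ and produce a limit by identifying it first in $\mathcal{D}^{1,2}(\R^N)$. Since $\s{\nabla u_n}$ is Cauchy in $L^2$ and $\mathcal{D}^{1,2}(\R^N)$ is complete, there is $u\in\mathcal{D}^{1,2}(\R^N)$ with $\nabla u_n\to\nabla u$ in $L^2$. By the Sobolev inequality, $u_n\to u$ in $L^{2^*}(\R^N)$. Passing to a subsequence, $u_n\to u$ almost everywhere. For this pointwise convergence I would use the Sobolev inequality rather than Theorem~\ref{emb}, which would also work: $\s{u_n}$ is Cauchy in $L^{q_\gamma}$ by that theorem applied to $u_n-u_m$.

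Next I show that $u\in\Q$ and that $\normq{u_n-u}\to0$, using the Fatou argument standard for Luxemburg norms. Fix $\varepsilon\in(0,1)$ and choose $n_0$ so that $\normq{u_n-u_m}\le\varepsilon$ for $n,m\ge n_0$. By Lemma~\ref{ball property}\ref{ball propety1} applied to $(u_n-u_m)/\varepsilon$, we get $\rho((u_n-u_m)/\varepsilon)\le1$. Now fix $n\ge n_0$ and let $m\to\infty$ along the a.e.\ convergent subsequence. For each $\beta$, $|u_n-u_m|^{q(\beta)}\to|u_n-u|^{q(\beta)}$ a.e. The integrand in the representation \eqref{s2},
\[
\frac{A_\beta|w(x)|^{q(\beta)}|w(y)|^{q(\beta)}}{|x-y|^{N-\beta}},
\]
is nonnegative and converges a.e.\ on $\R^N\times\R^N$, for each $\beta$. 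Applying Fatou's lemma first in $(x,y)$ and then in $\beta$ with respect to $\mu$ gives
\[
\rho\Big(\frac{u_n-u}{\varepsilon}\Big)\le\liminf_{m\to\infty}\rho\Big(\frac{u_n-u_m}{\varepsilon}\Big)\le1 .
\]
Joint measurability in $(\beta,x,y)$, needed to apply Tonelli, follows because the integrand is continuous in $\beta$ for fixed $x,y$ and measurable in $(x,y)$.

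From this, $u_n-u\in\Q$, and hence $u=u_n-(u_n-u)\in\Q$, because $\Q$ is a vector space: it is closed under addition by the convexity of $\rho$ together with \eqref{p1}. Moreover $\normq{u_n-u}\le\varepsilon$ for all $n\ge n_0$. Together with $\|\nabla(u_n-u)\|_{L^2}\to0$, this gives $u\in\x$ and $u_n\to u$ in the norm \eqref{normX} along the subsequence. A Cauchy sequence with a convergent subsequence converges, so the whole sequence converges.

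The main obstacle is the Fatou step. It needs the double-integral form \eqref{s2}, so that the integrand is pointwise nonnegative and converges a.e.; the form $|I_{\beta/2}\star\cdot|^2$ alone does not obviously allow passing to the limit. It also needs a single subsequence converging a.e.\ for all $\beta$ at once, which holds because the a.e.\ convergence of $u_m$ does not depend on $\beta$. Lemma~\ref{a1} is an alternative tool if a $\beta$-by-$\beta$ argument is preferred, but the Fatou route avoids needing it.
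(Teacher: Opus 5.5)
Your argument is correct, but it identifies the limit by a different route from the paper.

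\textbf{The paper's route.} The paper never uses completeness of $\mathcal{D}^{1,2}(\R^N)$. It obtains $u$ as the $L^{q_\gamma}$-limit via Theorem~\ref{emb}. It then uses Lemma~\ref{a1} to fix a single $\beta$ with a uniform Coulomb bound along a subsequence, so that Proposition~2.4 of \cite{MR3568051} gives weak differentiability of $u$ and lower semicontinuity of $\int|\nabla\cdot|^2$. Finally it applies Fatou as $m\to\infty$ to the unscaled quantity $\|\nabla(u_n-u_m)\|_{L^2}^2+\rho(u_n-u_m)$, concludes $\limsup_n[\|\nabla(u_n-u)\|_{L^2}^2+\rho(u_n-u)]=0$, and returns to the norm through Lemma~\ref{conver1}.

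\textbf{Your route.} You take the limit in the Hilbert space $\mathcal{D}^{1,2}(\R^N)$. This gives $\nabla u_n\to\nabla u$ at once, and a.e.\ convergence via Sobolev. You then apply Fatou to the rescaled modular $\rho((u_n-u_m)/\varepsilon)\le1$, which yields $\normq{u_n-u}\le\varepsilon$ directly for every $n\ge n_0$. So your closing remark about a convergent subsequence is not needed.

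\textbf{What each buys.} Your version is shorter and avoids both Lemma~\ref{a1} and the external gradient lower-semicontinuity result. The price is that it relies on $N\ge3$ and on the decay built into $\mathcal{D}^{1,2}(\R^N)$. That is harmless here, since the paper defines $\x$ through $\mathcal{D}^{1,2}(\R^N)$. The paper's argument, in the spirit of \cite{MR3568051}, instead extracts integrability of the limit from the nonlocal term.

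\textbf{A correction to your commentary.} The form $\int|I_{\beta/2}\star|w|^{q(\beta)}|^2dx$ does allow the limit passage. If $w_m\to w$ a.e., Fatou applied inside the convolution gives $I_{\beta/2}\star|w|^{q(\beta)}\le\liminf_m I_{\beta/2}\star|w_m|^{q(\beta)}$ pointwise, and Fatou then applies in $x$ and in $\beta$. This is what the paper uses implicitly, so \eqref{s2} is convenient but not essential.
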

\begin{proof}
    Let $\s{u_n}\subset\x$ be a Cauchy sequence. Then by Theorem \ref{emb}, $\s{u_n}$ is a Cauchy sequence in $L^{q_\gamma}(\R^N)$, which implies that there exists $u\in L^{q_\gamma}(\R^N)$ such that $u_n\to u$ in $L^{q_\gamma}(\R^N)$, and $u_n(x)\to u(x)$ for almost every $x\in\R^N$. Since $\normq{u_n}$ is bounded, we can find a $\beta\in(0,N)$ such that
    \[
    \int_{\R^N}\left|I_{\beta/2}\star|u_n|^{q(\beta)}\right|^2dx\leq C_1
    \]
    for a subsequence of $\s{u_n}$ and some constant $C_1>0$ by Lemma \ref{a1}. Now, by \cite[Proposition 2.4]{MR3568051}, we have that $u$ is weakly differentiable, and
    \[
    \int_{\R^N}|\nabla u|^2dx\leq\liminf_{n\to\infty}\int_{\R^N}|\nabla u_n|^2dx.
    \]
    By Fatou's lemma, we have 
    \[
    \int_{(0,N)}\int_{\R^N}\left|I_{\beta/2}\star|u|^{q(\beta)}\right|^2dxd\mu\leq\liminf_{n\to\infty}\int_{(0,N)}\int_{\R^N}\left|I_{\beta/2}\star|u_n|^{q(\beta)}\right|^2dxd\mu,
    \]
    which shows that $u\in\x$. By the same reasoning as above, replacing $u_n$ by $u_n-u_m$, one gets
    \begin{eqnarray*}
        &&\int_{\R^N}|\nabla u_n-\nabla u|^2dx+\int_{(0,N)}\int_{\R^N}\left|I_{\beta/2}\star|u_n-u|^{q(\beta)}\right|^2dxd\mu\nonumber\\
        &&\leq\limsup_{m\to\infty} \set{\int_{\R^N}|\nabla u_n-\nabla u_m|^2dx+\int_{(0,N)}\int_{\R^N}\left|I_{\beta/2}\star|u_n-u_m|^{q(\beta)}\right|^2dxd\mu}
    \end{eqnarray*}
    which implies that
    \begin{eqnarray*}
         &&\limsup_{n\to\infty}\set{\int_{\R^N}|\nabla u_n-\nabla u|^2dx+\int_{(0,N)}\int_{\R^N}\left|I_{\beta/2}\star|u_n-u|^{q(\beta)}\right|^2dxd\mu}\nonumber\\
        &&\leq\limsup_{m,n\to\infty}\set{\int_{\R^N}|\nabla u_n-\nabla u_m|^2dx+\int_{(0,N)}\int_{\R^N}\left|I_{\beta/2}\star|u_n-u_m|^{q(\beta)}\right|^2dxd\mu}=0,
    \end{eqnarray*}
    which implies that $\|u_n-u\|\to0$, by Lemma \ref{conver1}.
\end{proof}


\subsection[Density of smooth compactly supported functions in X]{Density of $C_c^\infty(\R^N)$ in $\x$}

In this section we prove that the set of smooth functions with compact support is dense in $\x$. In other words we will prove that
\[
\x=\overline{C_c^\infty(\R^N)}^{\|\cdot\|},\quad\text{with }\|u\|=\|\nabla u\|_{L^2}+\|u\|_{Q_\gamma}
\]
We will need the following fact (see \cite[Proposition 2.3]{MR3568051}).

\begin{proposition}\label{den}
    For all $N\in\N$, $\zeta\in(0,N)$ and $p\geq1$, there exists $C>0$ such that for every $a\in\R^N$ and $R>0$,
    \[
    \int_{B_R(a)}|u|^pdx\leq C R^{\frac{N-\zeta}{2}}\seq{\int_{B_R(a)}\left|I_{\zeta/2}\star|u|^{p}\right|^2dx}^{1/2}.
    \]
\end{proposition}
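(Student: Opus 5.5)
Since Proposition \ref{den} is quoted from \cite[Proposition 2.3]{MR3568051}, I would reprove it by a local duality argument based on the semigroup property and the positivity of the Riesz kernel on bounded sets. Fix $a\in\R^N$, $R>0$, and write $f=|u|^p\ge0$ (restricting to $f\chi_{B_R(a)}$ if convenient). The first step is to test $f$ against the indicator $\chi_{B_R(a)}$ and insert the kernel $I_{\zeta/2}\star I_{\zeta/2}=I_\zeta$. On $B_R(a)$, every $x,y\in B_R(a)$ satisfy $|x-y|\le 2R$, so $I_\zeta(x-y)\ge A_\zeta(2R)^{\zeta-N}$. Integrating this bound against $\chi_{B_R(a)}(y)\,dy$ gives
\[
\int_{\R^N}I_\zeta(x-y)\chi_{B_R(a)}(y)\,dy\ \ge\ c\,R^{\zeta}
\]
for every $x\in B_R(a)$.

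The second step converts this pointwise lower bound into the estimate. Since $f\ge0$,
\[
cR^\zeta\int_{B_R(a)}f\,dx\le\int_{\R^N} f\,(I_\zeta\star\chi_{B_R(a)})\,dx=\int_{\R^N}(I_{\zeta/2}\star f)(I_{\zeta/2}\star\chi_{B_R(a)})\,dx,
\]
where the equality follows from Fubini/Tonelli and the semigroup property; everything is nonnegative, so no integrability issues arise. Cauchy--Schwarz then bounds the right-hand side by $\|I_{\zeta/2}\star f\|_{L^2}\,\|I_{\zeta/2}\star\chi_{B_R(a)}\|_{L^2}$. By scaling,
\[
\|I_{\zeta/2}\star\chi_{B_R}\|_{L^2}^2=\int\!\!\int \chi_{B_R}\,I_\zeta\,\chi_{B_R}=CR^{N+\zeta},
\]
so that $\|I_{\zeta/2}\star\chi_{B_R}\|_{L^2}=CR^{(N+\zeta)/2}$. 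Dividing by $R^{\zeta}$ yields the power $R^{\frac{N+\zeta}{2}-\zeta}=R^{\frac{N-\zeta}{2}}$, as claimed.

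The main obstacle is that the statement places the $L^2$ norm over $B_R(a)$ only, not over $\R^N$. The Cauchy--Schwarz step above produces the global norm $\|I_{\zeta/2}\star f\|_{L^2(\R^N)}$, which is what is actually used in the density argument. To obtain the localized version, I would compare $I_{\zeta/2}\star f$ on $B_R(a)$ from below. For $x\in B_R(a)$,
\[
I_{\zeta/2}\star f(x)\ge A_{\zeta/2}(2R)^{\zeta/2-N}\int_{B_R(a)}f,
\]
since again $|x-y|\le 2R$ for $x,y\in B_R(a)$. Squaring and integrating over $B_R(a)$ gives
\[
\int_{B_R(a)}|I_{\zeta/2}\star f|^2\ge cR^{\zeta-2N+N}\Big(\int_{B_R(a)}f\Big)^2.
\]
This is exactly the claimed inequality with exponent $R^{(N-\zeta)/2}$, and it is in fact a cleaner route that requires only positivity of the kernel. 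I would therefore present this direct pointwise argument as the proof, with the duality computation as a consistency check on the exponent.
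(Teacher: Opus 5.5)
Your final argument is correct and is essentially the proof of \cite[Proposition 2.3]{MR3568051}, which the paper cites without reproducing: bound $I_{\zeta/2}\star|u|^p(x)\ge A_{\zeta/2}(2R)^{\zeta/2-N}\int_{B_R(a)}|u|^p\,dy$ pointwise for $x\in B_R(a)$, then square and integrate over $B_R(a)$. One correction to your commentary: the density proof (Proposition \ref{den1}) needs the localized norm over $B_{1/n}(x)$, not the global one, because that is what gives $\psi_n\star u(x)\to0$ as $|x|\to\infty$; the global bound from your duality computation would not have been enough there.
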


The proof of the following proposition is just an adaptation of \cite[Proposition 2.6]{MR3568051}.

\begin{proposition}\label{den1}
    The space $C_c^\infty(\R^N)$ of smooth functions with compact support is dense in $\x$.
\end{proposition}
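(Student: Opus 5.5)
The density of $C_c^\infty(\R^N)$ in $\x$ will be shown in three steps, as in \cite[Proposition 2.6]{MR3568051}: truncation in the range, cut-off in space, and mollification. Each step produces approximants $v_n\to u$ with $\|\nabla(v_n-u)\|_{L^2}\to0$ and $\rho(v_n-u)\to0$. By Lemma \ref{conver1}, the second condition is equivalent to convergence in $\normq{\cdot}$. In every step the convergence $\rho(v_n-u)\to0$ reduces, through the definition of $\rho$, to convergence of the inner integral $\om|I_{\beta/2}\star|v_n-u|^{q(\beta)}|^2dx$ for $\mu$-a.e.\ $\beta\in[a,b]$. That convergence must then be passed through the $\mu$-integral with dominated convergence in $\beta$. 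The dominating function will be a constant multiple of $\om|I_{\beta/2}\star|u|^{q(\beta)}|^2dx$, which is $\mu$-integrable because $u\in\Q$; after rescaling $u$ we may assume $\rho(u)<\infty$.

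\textbf{Step 1: truncation.} Set $T_k u=\max(-k,\min(u,k))$ and $S_\varepsilon u=\operatorname{sign}(u)(|u|-\varepsilon)_+$, so that $u_{k,\varepsilon}=S_\varepsilon T_k u$. Then $|u-u_{k,\varepsilon}|\le|u|$ pointwise and $u-u_{k,\varepsilon}\to0$ a.e. By positivity of the Riesz kernel this gives
\[
\left|I_{\beta/2}\star|u-u_{k,\varepsilon}|^{q(\beta)}\right|\le I_{\beta/2}\star|u|^{q(\beta)}.
\]
Dominated convergence in $x$, then in $\beta$, yields $\rho(u-u_{k,\varepsilon})\to0$. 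The gradient part converges by the standard $\mathcal{D}^{1,2}$ argument. The resulting function is bounded and supported in $\{|u|>\varepsilon\}$. This set has finite measure because $u\in L^{q_\gamma}$ by Theorem \ref{emb}.

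\textbf{Step 2: spatial cut-off.} Take $\varphi_R(x)=\varphi(x/R)$ with $\varphi\in C_c^\infty$, $\varphi=1$ on $B_1$. For bounded $u$ supported in a finite-measure set, $u\varphi_R\to u$ in $Q_\gamma$ by the same pointwise domination, since $|u-u\varphi_R|\le|u|$. For the gradient, the term $\|u\nabla\varphi_R\|_{L^2}$ is controlled on the annulus $B_{2R}\setminus B_R$ by Hölder's inequality with $u\in L^{2^*}$, exactly as in the classical $\mathcal{D}^{1,2}$ argument. It tends to $0$ as $R\to\infty$. Proposition \ref{den} is available if one wants the local estimate of \cite{MR3568051} on annuli for a single $\beta$, but I expect Sobolev to suffice here.

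\textbf{Step 3: mollification.} For bounded, compactly supported $u$, set $u_\delta=u\star\eta_\delta$. Then $\nabla u_\delta\to\nabla u$ in $L^2$. Moreover $u_\delta\to u$ in every $L^p$ and the supports lie in a fixed ball. For each $\beta\in[a,b]$ the Hardy--Littlewood--Sobolev inequality gives
\[
\om\left|I_{\beta/2}\star|f|^{q(\beta)}\right|^2dx\le C_\beta\||f|^{q(\beta)}\|^2_{L^{2N/(N+\beta)}}.
\]
The right-hand side tends to $0$ for $f=u_\delta-u$ because $f$ is bounded and supported in a fixed ball. The main obstacle is uniformity in $\beta$. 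The constant $C_\beta$ is bounded for $\beta\in[a,b]\subset(0,N)$, since $A_\beta$ is continuous there; this is exactly why $\mu$ is supported in $[a,b]$. The exponents $q(\beta)\in[q(a),q(b)]$ are also bounded, so $\sup_{\beta\in[a,b]}\|f\|_\infty^{2q(\beta)}|\operatorname{supp}f|^{(N+\beta)/N}$ gives a uniform bound. Dominated convergence with respect to the finite measure $\mu$ then gives $\rho(u_\delta-u)\to0$.

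Combining the three steps with a diagonal argument and the triangle inequality in $\|\cdot\|$ proves the density.
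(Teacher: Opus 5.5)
Your proof is correct, but it is organized differently from the paper's.

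The paper uses a single approximating sequence $u_n=\Phi_n\circ(\psi_n\star u)$: mollify, then cut away small values.
\begin{itemize}
\item Compact support of $u_n$ comes from the decay of $\psi_n\star u$ at infinity, which the paper gets from Proposition \ref{den} for one fixed $\beta$.
\item The gradient part follows from Jensen, Fatou and Brezis--Lieb.
\item For the nonlocal part, Jensen's inequality gives $I_{\beta/2}\star|u_n|^{q(\beta)}\le\psi_n\star\bigl(I_{\beta/2}\star|u|^{q(\beta)}\bigr)$. Hence, for each $\beta$, $\int_{\R^N}|I_{\beta/2}\star|u_n-u|^{q(\beta)}|^2dx$ is bounded, uniformly in $n$, by a fixed multiple of $\int_{\R^N}|I_{\beta/2}\star|u|^{q(\beta)}|^2dx$. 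That quantity is $\mu$-integrable.
\end{itemize}
So the paper never needs Hardy--Littlewood--Sobolev constants. However, its pointwise convergence argument for the Riesz term is delicate. For instance, \eqref{den7} is written as a pointwise bound, but Jensen only yields the majorant $\psi_n\star(I_{\beta/2}\star|u|^{q(\beta)})$, so what is really needed is a generalized dominated convergence argument.

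Your classical three-step scheme avoids both Proposition \ref{den} and that delicate step. In Steps 1 and 2 the bound $|u-v_n|\le|u|$ is genuinely pointwise, and by positivity of the kernel it gives the same $\mu$-integrable majorant. The price is Step 3, where no such majorant exists and you need the HLS inequality with a constant bounded for $\beta\in[a,b]$.

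That bound is true, but your justification (continuity of $A_\beta$) is incomplete. You also need the HLS constant for the bilinear form with kernel $|x|^{\beta-N}$ on $L^{2N/(N+\beta)}$ to stay bounded on $[a,b]$. This follows, for instance, from Lieb's explicit sharp constant in this diagonal case, which is continuous in $\beta\in(0,N)$.

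Two minor remarks:
\begin{itemize}
\item The lower truncation $S_\varepsilon$ is not needed once you cut off in space. With it, though, the annulus term is simply $\|u\nabla\varphi_R\|_{L^2}^2\le CR^{-2}\|u\|_\infty^2|\{|u|>\varepsilon\}|$, with no need for Sobolev.
\item No rescaling is needed to get $\rho(u)<\infty$. Since $\rho(tu)\le\max\{t^{2q(a)},t^{2q(b)}\}\rho(u)$, $\rho$ is finite on every multiple of $u$ once it is finite on one nonzero multiple.
\end{itemize}
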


\begin{proof}
    Let $\Phi\in C^\infty(\R^N)$ be a function with $|\Phi^\prime|\leq1$ on $\R$, $\Phi=0$ on $[-1,1]$ and $\Phi(t)=t$ if $|t|\geq2.$. For $n\in\N$, let $\Phi_n(t)=\Phi(nt)/n$. Now let $\psi\in C_c^{\infty}(\R^N)$ with $\psi(x)\leq1$ for all $x\in\R^N$ with $\om\psi=1$ and $\text{supp}\,\psi\subset B_1$ and define
    \[
    \psi_n(x)=n^N\psi(nx).
    \]
    Let $u\in\x$ and set
    \[
    u_n:=\Phi_n\circ(\psi_n\star u).
    \]
    Note that $u_n$ is well defined for all $n$ since $u\in L^1_{\text{loc}}(\R^N)$ by Theorem \ref{emb}. It is easily seen that $u_n\in C^\infty(\R^N)$. Now, since $u\in\x$, we have that
    \[
    \int_{(0,N)}\int_{\R^N}\left|I_{\beta/2}\star|u|^{q(\beta)}\right|^2dxd\mu<\infty,
    \]
    which implies that 
   \begin{eqnarray}\label{den2}
       \int_{\R^N}\left|I_{\beta/2}\star|u|^{q(\beta)}\right|^2dx<\infty
   \end{eqnarray}
    for almost every $\beta\in(0,N)$. Let us fix $\beta\in(0,N)$ satisfying \eqref{den2}. Then a combination of H\"older inequality with Proposition \ref{den} gives
    \begin{eqnarray}\label{den3}
        \left|\psi_n\star u(x)\right|\leq Cn^N\int_{B_{1/n}(x)}|u|\leq Cn^{N/q(\beta)}\seq{\int_{B_{1/n}(x)}|u|^{q(\beta)}}^{1/q(\beta)}\nonumber\\
        \leq C^\prime n^{(N+\beta)/2q(\beta)}\seq{\int_{B_{1/n}(x)}\left|I_{\beta/2}\star|u|^{q(\beta)}\right|^2dx}^{1/2q(\beta)},
    \end{eqnarray}
    where $C^\prime>0$ is a constant depending on $\beta.$ Since \eqref{den2} holds, it follows from \eqref{den3} that
    \[
    \lim_{|x|\to\infty}|\psi_n\star u(x)|=0,
    \]
    which implies that $\text{supp}\,u_n=\text{supp}\,(\Phi_n\circ(\psi_n\star u))$ is a compact set. So in fact
    \[
    u_n\in C^\infty_c(\R^N).
    \]
    Since the function $u$ is locally integrable and weakly differentiable, the sequence $\s{\psi_n\star u}$ converges to $u$ in $W^{1,1}_{\text{loc}}(\R^N)$. By the properties of $\Phi_n$ and by Lebesgue's dominated convergence theorem, it follows that the sequence $\s{u_n}$ converges to $u$ in $W^{1,1}_{\text{loc}}(\R^N)$, so that $\nabla u_n\to \nabla u$ almost everywhere on $\R^N$. Since $\psi$ is nonnegative, Jensen's inequality (with the positive measure $\psi(y)dy$) gives us
    \[
    \int_{\R^N}|\nabla u_n|^2\leq\int_{\R^N}|\nabla (\psi_n\star u)|^2\leq\int_{\R^N}|\nabla u|^2.
    \]
    This together with Fatou's lemma gives
    \[
    \int_{\R^N}|\nabla u_n|^2dx\to\int_{\R^N}|\nabla u|^2dx
    \]
    which implies that
    \begin{eqnarray}\label{den-grad}
        \lim_{n\to\infty}\int_{\R^N}|\nabla u_n-\nabla u|^2dx=0
    \end{eqnarray}
    by Brezis-Lieb's lemma. Now, Fatou's lemma gives us, for every $x\in\R^N$,
  \begin{eqnarray}\label{den4}
\liminf_{n\to\infty}\seq{I_{\beta/2}\star|u_n|^{q(\beta)}}(x)\geq\seq{I_{\beta/2}\star|u|^{q(\beta)}}(x),
  \end{eqnarray}
  and by Jensen's inequality,
  \[
  I_{\beta/2}\star|u_n|^{q(\beta)}\leq I_{\beta/2}\star|\psi_n\star u|^{q(\beta)}\leq\psi_n\star\seq{I_{\beta/2}\star|u|^{q(\beta)}}
  \]
  and hence
  \begin{eqnarray}\label{den4-1}
\int_{\R^N}\left|I_{\beta/2}\star|u_n|^{q(\beta)}\right|^2dx\leq\int_{\R^N}\left|I_{\beta/2}\star|u|^{q(\beta)}\right|^2dx
  \end{eqnarray}
  for all $n\in\N$, which yields to
 \begin{eqnarray}\label{den5}
\limsup_{n\to\infty}\int_{\R^N}\left|I_{\beta/2}\star|u_n|^{q(\beta)}\right|^2dx\leq\int_{\R^N}\left|I_{\beta/2}\star|u|^{q(\beta)}\right|^2dx
 \end{eqnarray}
Now, \eqref{den4} and \eqref{den5} combined gives us

\begin{eqnarray}\label{den6}
    \lim_{n\to\infty}\seq{I_{\beta/2}\star|u|^{q(\beta)}}(x)=\seq{I_{\beta/2}\star|u|^{q(\beta)}}(x)
\end{eqnarray}
for almost every $x\in\R^n$. By Brezis-Lieb we have that
\[
\lim_{n\to\infty}\seq{I_{\beta/2}\star|u_n-u|^{q(\beta)}}(x)=0.
\]
By the same reasoning, from \eqref{den5} and \eqref{den6} and using Brezis-Lieb in $L^2$, we have that
\[
\lim_{n\to\infty}\int_{\R^N}\left|I_{\beta/2}\star|u_n|^{q(\beta)}-I_{\beta/2}\star|u|^{q(\beta)}\right|^2dx=0.
\]
Now, since
\begin{eqnarray}\label{den7}
    I_{\beta/2}\star|u_n-u|^{q(\beta)}&\leq& 2^{q(\beta)-1}I_{\beta/2}\star\seq{|u|^{q(\beta)}+|u_n|^{q(\beta)}}\nonumber\\
    &\leq& 2C\seq{I_{\beta/2}\star|u|^{q(\beta)}},
\end{eqnarray}
where $C>0$ is a constant that does not depend on $\beta$ (it can be taken to be $2^{1+(N+2)/2\gamma}$). We conclude by Lebesgue's dominated convergence theorem that
\begin{eqnarray}\label{den8}
    \lim_{n\to\infty}\int_{\R^N}\left|I_{\beta/2}\star|u_n-u|^{q(\beta)}\right|^2dx=0.
\end{eqnarray}
Now, after integrating over $(0,N)$ the inequalities \eqref{den4-1} and \eqref{den7}, and combining this with \eqref{den8}, Lebesgue's dominated convergence theorem can be employed once more to show that
\[
\lim_{n\to\infty}\int_{(0,N)}\int_{\R^N}\left|I_{\beta/2}\star|u_n-u|^{q(\beta)}\right|^2dx=0.
\]
We will show now that this implies $\normq{u_n-u}\to0$. Let $0<\lambda<1$, then
\begin{eqnarray*}
    \int_{(0,N)}\int_{\R^N}\left|I_{\beta/2}\star\left|\dfrac{u_n-u}{\lambda}\right|^{q(\beta)}\right|^2dxd\mu\leq\dfrac{1}{\lambda^{2q(b)}}\int_{(0,N)}\int_{\R^N}\left|I_{\beta/2}\star|u_n-u|^{q(\beta)}\right|^2dxd\mu,
\end{eqnarray*}
and if we take $n\in\N$ large enough so the right hand side of the above inequality becomes less than one, we get
\[
 \int_{(0,N)}\int_{\R^N}\left|I_{\beta/2}\star\left|\dfrac{u_n-u}{\lambda}\right|^{q(\beta)}\right|^2dxd\mu\leq1,
\]
which implies that, for $n\in\N$ large enough we have
\[
\normq{u_n-u}\leq \lambda,
\]
and since $\lambda\in(0,1)$ was taken arbitrarily, this shows that
\begin{eqnarray}\label{den9}
    \lim_{n\to0}\normq{u_n-u}=0.
\end{eqnarray}
\eqref{den-grad} and \eqref{den9} together implies that $u_n\to u,$ in $\x$.
\end{proof}


\subsection{Reflexivity of superposed Coulomb-Sobolev spaces}\label{uniform-convexity}

In this section we will study the reflexivity of superposed Coulomb-Sobolev spaces. We will show that $\x$ is reflexive by considering an equivalent uniform convex norm on $\x$. From now on we assume that $2(\gamma-1)\leq a$. This guarantees that $q(\beta)\geq2$ for all $\beta\in[a,b]$. Then we have (see \cite[Remark 2.2]{MR3568051})
\begin{eqnarray*}
    \om|I_{\beta/2}\star|u+v|^{q(\beta)}|^2dx&+&\om|I_{\beta/2}\star|u-v|^{q(\beta)}|dx\nonumber\\
    &\leq&2^{2q(\beta)-1}\seq{\om|I_{\beta/2}\star |u|^{q(\beta)}|^2dx+\om|I_{\beta/2}\star|v|^{q(\beta)}|^2dx},
\end{eqnarray*}
for all $u,v\in\Q$. And after dividing both sides by $2^{2q(\beta)}$ and then integrating in $\beta$ on both sides of the above inequality one gets
\begin{eqnarray}\label{uniform1}
    \rho\left(\frac{u+v}{2}\right)+\rho\seq{\dfrac{u-v}{2}}\leq\dfrac{1}{2}\seq{\rho(u)+\rho(v)}.
\end{eqnarray}

\begin{theorem}
    The space $Q_{\gamma}(\R^N)$ is uniformly convex.
\end{theorem}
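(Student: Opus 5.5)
We verify uniform convexity of $\|\cdot\|_{Q_\gamma}$ directly from the Clarkson-type inequality \eqref{uniform1}, combined with the unit ball property (Lemma \ref{ball property}) and the modular-to-norm comparison lemmas (Lemmas \ref{lem ball0} and \ref{lem ball}). We must show that for every $\varepsilon\in(0,2]$ there is $\delta>0$ such that, for $u,v\in\Q$,
\[
\normq{u}\le1,\quad \normq{v}\le1,\quad \normq{u-v}\ge\varepsilon \quad\Longrightarrow\quad \normq{\tfrac{u+v}{2}}\le 1-\delta .
\]

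First, by Lemma \ref{ball property} \ref{ball propety1}, $\normq{u},\normq{v}\le1$ gives $\rho(u),\rho(v)\le1$. Inequality \eqref{uniform1} then yields
\[
\rho\Big(\frac{u+v}{2}\Big)\le 1-\rho\Big(\frac{u-v}{2}\Big).
\]
It therefore suffices to bound $\rho\big(\frac{u-v}{2}\big)$ from below by a positive constant depending only on $\varepsilon$.

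Second, we have $\normq{\frac{u-v}{2}}\ge \varepsilon/2$. We argue by contraposition using Lemma \ref{lem ball0}. With $\varepsilon'=\varepsilon/4$, that lemma gives $\delta_1=\delta(\varepsilon/4)>0$ such that $\rho(w)\le\delta_1$ implies $\normq{w}\le\varepsilon/4<\varepsilon/2$. Hence $\rho\big(\frac{u-v}{2}\big)>\delta_1$. Concretely, one can take $\delta_1=K^{-j}$ with $K=2^{2q(b)}$ and $2^{-j}\le\varepsilon/4$; this is uniform because $q(\beta)\le q(b)$ on the support of $\mu$. Therefore $\rho\big(\frac{u+v}{2}\big)\le 1-\delta_1$.

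Third, Lemma \ref{lem ball} applied with $\varepsilon=\delta_1$ gives $\delta=\delta(\delta_1)>0$ with $\normq{\frac{u+v}{2}}\le1-\delta$. Here $\delta$ depends only on $\varepsilon$ (through $K$, i.e.\ through $q(b)$), which is exactly uniform convexity.

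The main obstacle is the uniformity in the last two steps. It relies on the $\Delta_2$-type bound $\rho(2u)\le K\rho(u)$ with a constant independent of $\beta$, which holds because $\mu$ is supported in $[a,b]$, together with the fact that \eqref{uniform1} requires $q(\beta)\ge2$, which is guaranteed by the standing assumption $2(\gamma-1)\le a$. One should also check that \eqref{uniform1} is correctly derived, namely that after dividing by $2^{2q(\beta)}$ the right side becomes $\frac12(\rho(u)+\rho(v))$. This is where the homogeneity $\rho(w/2)$ of degree $2q(\beta)$ in each fiber is used, so no extra loss appears. Once these are in place the argument is the standard Luxemburg-norm one. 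Reflexivity then follows by Milman--Pettis, and for $\x$ one combines this with the Hilbert structure of $\mathcal{D}^{1,2}$ via an equivalent norm such as $(\|\nabla u\|_{L^2}^2+\normq{u}^2)^{1/2}$.
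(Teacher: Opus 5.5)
Your proposal is correct and follows essentially the same route as the paper's proof: the unit ball property gives $\rho(u),\rho(v)\le 1$, Lemma \ref{lem ball0} bounds $\rho((u-v)/2)$ from below by a constant depending only on $\varepsilon$, inequality \eqref{uniform1} then forces $\rho((u+v)/2)\le 1-\delta_1$, and Lemma \ref{lem ball} converts this into $\normq{(u+v)/2}\le 1-\delta$. The only cosmetic difference is that you write the Clarkson step additively as $\rho((u+v)/2)\le 1-\rho((u-v)/2)$, whereas the paper phrases it multiplicatively via $\rho((u-v)/2)>\eta\,\frac{\rho(u)+\rho(v)}{2}$.
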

\begin{proof}
    Let $\varepsilon>0$ and $u,v\in\Q$ with $\normq{u},\normq{v}\leq1$ and $\normq{u-v}>\varepsilon$. Then $\normq{(u-v)/2}>\varepsilon/2$. Then by Lemma \ref{lem ball0} there exists $\eta=\eta(\varepsilon)>0$ such that $\rho\seq{(u-v)/2}>\eta.$ By the unit ball property (Lemma \ref{ball property} \ref{ball propety1}) we have $\rho(u),\rho(v)\leq1$, so 
    \[
    \rho\seq{\frac{u-v}{2}}>\eta\frac{\rho(u)+\rho(v)}{2},
    \]
    which implies
    \[
    \rho\seq{\frac{u+v}{2}}\leq(1-\eta)\frac{\rho(u)+\rho(v)}{2}\leq1-\eta,
    \]
    by inequality \eqref{uniform1}. Now Lemma \ref{lem ball} implies the existence of a $\delta>0$ such that
    \[
    \left\|\dfrac{u+v}{2}\right\|_{Q_{\gamma}}\leq1-\delta.
    \]
\end{proof}
We now consider the equivalent norm on $\x$ given by
\[
\|u\|_2:=\sqrt{\|u\|_{Q_\gamma}^2+\|\nabla u\|_{L^2}^2}.
\]

\begin{theorem}
    $(\x,\|\cdot\|_2)$ is a uniform convex space.
\end{theorem}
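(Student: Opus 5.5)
The plan is to derive uniform convexity of $(\x,\|\cdot\|_2)$ from the uniform convexity of $\Q$ (the preceding theorem) and of the Hilbert norm $\|\nabla u\|_{L^2}$. Both facts are fed into the standard argument that an $\ell^2$-sum of two uniformly convex spaces is uniformly convex. Concretely, $(\x,\|\cdot\|_2)$ embeds isometrically into $\Q\oplus_2 L^2(\R^N;\R^N)$ via $u\mapsto(u,\nabla u)$. A subspace of a uniformly convex space is uniformly convex, so it suffices to show that $Z:=\Q\oplus_2 H$ is uniformly convex, where $H=L^2(\R^N;\R^N)$ is a Hilbert space. Equivalently, I will argue directly on $\x$ with the pair $(u,\nabla u)$.

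Fix $\varepsilon\in(0,2]$, and take $u,v\in\x$ with $\|u\|_2,\|v\|_2\le1$ and $\|u-v\|_2\ge\varepsilon$. Write $a_1=\normq{u}$, $a_2=\normq{v}$, $b_1=\|\nabla u\|_{L^2}$ and $b_2=\|\nabla v\|_{L^2}$. Suppose, aiming for a contradiction in quantitative form, that $\|(u+v)/2\|_2^2>1-\delta$ with $\delta$ small. Combine the triangle inequality in each component with convexity of $t\mapsto t^2$:
\[
\Big\|\tfrac{u+v}{2}\Big\|_2^2\le\Big(\tfrac{a_1+a_2}{2}\Big)^2+\Big(\tfrac{b_1+b_2}{2}\Big)^2\le \tfrac{a_1^2+a_2^2+b_1^2+b_2^2}{2}\le1 .
\]
The gap in the middle inequality equals $\tfrac14\big((a_1-a_2)^2+(b_1-b_2)^2\big)$. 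So if the left side exceeds $1-\delta$, then $|a_1-a_2|,|b_1-b_2|\le2\sqrt\delta$. Moreover, the first inequality is nearly an equality in each component:
\[
\normq{\tfrac{u+v}{2}}\ge\tfrac{a_1+a_2}{2}-O(\sqrt\delta),\qquad \Big\|\tfrac{\nabla u+\nabla v}{2}\Big\|_{L^2}\ge\tfrac{b_1+b_2}{2}-O(\sqrt\delta).
\]
Since $\|u-v\|_2\ge\varepsilon$, one of the components, say the $\Q$-one, satisfies $\normq{u-v}\ge\varepsilon/\sqrt2$. The gradient case is identical, using the parallelogram law.

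The main step is to turn uniform convexity of $\Q$, which is stated only on the unit ball, into a contradiction at the scale $a:=\max(a_1,a_2)$. Note that $a\ge\varepsilon/(2\sqrt2)$ is bounded below, because $\normq{u-v}\le a_1+a_2$. Rescale to $\tilde u=u/a$ and $\tilde v=v/a$. Then $\normq{\tilde u},\normq{\tilde v}\le1$ and $\normq{\tilde u-\tilde v}\ge\varepsilon/(\sqrt2 a)\ge\varepsilon/\sqrt2$, using $a\le1$. Uniform convexity of $\Q$ then gives some $\eta(\varepsilon)>0$ with
\[
\normq{\tfrac{u+v}{2}}\le(1-\eta)a\le\tfrac{a_1+a_2}{2}+2\sqrt\delta-\eta a .
\]
Together with the near-equality above, this yields $\eta\varepsilon/(2\sqrt2)\le C\sqrt\delta$. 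Choosing $\delta=\delta(\varepsilon)$ smaller than this threshold gives a contradiction. Hence $\|(u+v)/2\|_2\le\sqrt{1-\delta}$, which is uniform convexity.

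The only delicate point is this bookkeeping: handling the scale via the lower bound on $a$, and using $|a_1-a_2|$ small to compare $\max(a_1,a_2)$ with the average. The rest is the parallelogram law and elementary inequalities. As a corollary, by the Milman–Pettis theorem $\x$ is reflexive.
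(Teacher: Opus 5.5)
Your proposal is correct and follows the paper's route. The paper likewise embeds $(\x,\|\cdot\|_2)$ isometrically into $\Q\times L^2(\R^N,\R^N)$ via $u\mapsto(u,\nabla u)$ with the $\ell^2$-type product norm, and concludes from uniform convexity of that product. The only difference is that the paper cites the standard fact that an $\ell^2$-product of two uniformly convex spaces is uniformly convex, whereas you prove it by hand, and your argument holds up: the gap estimate $|a_1-a_2|<2\sqrt\delta$, the rescaling by $\max(a_1,a_2)\ge\varepsilon/(2\sqrt2)$, and the parallelogram law in the gradient component.
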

\begin{proof}
    Define the map $T:\x\to \Q\times L^{2}(\R^N,\R^N)$ by
    \[
    T(u)=(u,\nabla u).
    \]
    The norm of a pair $(f,g)\in\Q\times L^2(\R^N,\R^N)$ is given by
    \[
     \sqrt{\|f\|^2_{Q_\gamma}+\|g\|^2_{L^{2}}.}
    \]
    Then $T$ embeds $\mathcal X$ isometrically into 
$\Q \times L^2(\mathbb R^N,\mathbb R^N)$. Since the latter space is uniformly convex, being the product of two uniformly convex spaces with an $l^p$ type norm, the isometric subspace $T(\mathcal X)$ is uniformly convex. Hence $\mathcal X$ is uniformly convex.
\end{proof}

\begin{corollary}
    $\x$ is a reflexive Banach space.
    \end{corollary}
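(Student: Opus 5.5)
The plan is to invoke the Milman--Pettis theorem: every uniformly convex Banach space is reflexive. By the preceding theorem, $(\x,\|\cdot\|_2)$ is uniformly convex. What remains is to check that it is a Banach space and that reflexivity passes back to the original norm $\|\cdot\|$ from \eqref{normX}.

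First, $\|\cdot\|_2$ and $\|\cdot\|$ are equivalent norms on $\x$. Writing $s=\|\nabla u\|_{L^2}$ and $t=\normq{u}$, the elementary inequalities
\[
\sqrt{s^2+t^2}\le s+t\le \sqrt{2}\sqrt{s^2+t^2}
\]
give $\|u\|_2\le\|u\|\le\sqrt2\,\|u\|_2$. Consequently Cauchy sequences and limits coincide for the two norms. Since $(\x,\|\cdot\|)$ is a Banach space by the completeness theorem above, $(\x,\|\cdot\|_2)$ is complete as well.

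Next, Milman--Pettis applied to the uniformly convex Banach space $(\x,\|\cdot\|_2)$ shows that it is reflexive. Reflexivity is invariant under passing to an equivalent norm. The dual space, the canonical embedding into the bidual and its surjectivity are all defined in terms of the topology, and an equivalent norm leaves the topology unchanged. Hence $(\x,\|\cdot\|)$ is reflexive too.

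No step is a genuine obstacle. The only point needing care is that the uniform convexity theorem was proved for $\|\cdot\|_2$ rather than for the defining norm. The norm equivalence above handles this, and it also supplies the completeness that Milman--Pettis requires. The uniform convexity of $\Q$ used in that theorem relies on the standing assumption $2(\gamma-1)\le a$, which gives $q(\beta)\ge2$, so the corollary holds under that assumption.
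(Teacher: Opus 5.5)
Your proposal is correct and follows the paper's route. The paper states the corollary without proof, as an immediate consequence of the uniform convexity of $(\x,\|\cdot\|_2)$ via Milman--Pettis. Your equivalence $\|u\|_2\le\|u\|\le\sqrt2\,\|u\|_2$ correctly supplies the completeness and the transfer of reflexivity to the original norm, which the paper leaves unstated.
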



\subsection{Superposed Coulomb-Sobolev embeddings}
We devote this section to the proof of Theorem \ref{th0}. First let us remember the following concept.
\begin{definition}\label{SCS1}
    Let $\alpha\in(0,N)$. We say that $\alpha$ has the positive neighborhood property with respect to $\mu$ if $\mu(I)>0$ for any open interval $I$ in $(0,N)$ that contains $\alpha$.
\end{definition}
Following the ideas contained in \cite{MR3568051} (see \cite[Proposition 6.5]{MR3568051}) we obtain the following.

\begin{proposition}\label{SCS2}
    Let $N\geq2$, and assume that $\alpha\in(1,N)$ has the positive neighborhood property with respect to $\mu$. Let $\eta\in\R$. If $r> q(\alpha)$ and
    \begin{enumroman}
        \item\label{1} either $\dfrac{1}{q(\alpha)}>\dfrac{N-2}{N+\alpha}$ and $\dfrac{N-2}{2(N-\eta)}<\dfrac{1}{r}<\dfrac{3N+\alpha-4}{2(q(\alpha)(2N-2-\eta))-(N-\alpha-2\eta)}$,
        \item  or $\dfrac{1}{q(\alpha)}<\dfrac{N-2}{N+\alpha}$ and $\dfrac{N-2}{2(N-\eta)}>\dfrac{1}{r}>\dfrac{3N+\alpha-4}{2(q(\alpha)(2N-2-\eta))-(N-\alpha-2\eta)}$,
        \end{enumroman}
        then there exists a constant $K^\prime>0$ such that
        \[
        \seq{\om\dfrac{|u(x)|^r}{|x|^\eta}dx}^{1/r}\leq K^\prime\|u\|.
        \]
\end{proposition}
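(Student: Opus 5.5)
The plan is to reduce the statement to the single-kernel weighted embedding of \cite[Proposition 6.5]{MR3568051}, applied at an exponent $\beta$ close to $\alpha$, and then to average over a small neighborhood of $\alpha$ carrying positive $\mu$-mass.

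\textbf{Step 1: fix a neighborhood of $\alpha$.} Both hypotheses \ref{1} and (ii) are strict inequalities in $\alpha$ (through $q(\alpha)$ and the explicit expressions). Every quantity depends continuously on $\beta$, since $q(\beta)=1+(\beta+2)/(2\gamma)$ is affine. Hence there is $\delta>0$ such that $I=(\alpha-\delta,\alpha+\delta)\subset(1,N)$ and, for every $\beta\in I$, the same case of the hypotheses holds with $\alpha$ replaced by $\beta$. Shrinking $\delta$ also keeps $r>q(\beta)$ on $I$. By the positive neighborhood property, $\mu(I)>0$.

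\textbf{Step 2: a single-kernel inequality with uniform constants.} For each $\beta\in I$, \cite[Proposition 6.5]{MR3568051} gives
\[
\Big(\int_{\R^N}\frac{|u|^r}{|x|^\eta}dx\Big)^{1/r}\le C_\beta\Big(\|\nabla u\|_{L^2}+\Big(\int_{\R^N}\big|I_{\beta/2}\star|u|^{q(\beta)}\big|^2dx\Big)^{\frac{1}{2q(\beta)}}\Big)
\]
for radial $u$. This is the radial setting of that proposition; the ambient space here is implicitly $\X$, as in Theorem~\ref{th0}. I then need $\sup_{\beta\in\bar I'}C_\beta<\infty$ on a slightly smaller closed interval $\bar I'$ that still has positive measure. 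One can always pick such an interval: take $I'$ with $\mu(I')>0$, which is possible by continuity from below of $\mu$, since $I=\bigcup I'_n$.

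Uniformity is the main obstacle. The constant in \cite{MR3568051} comes from a radial pointwise decay estimate, a Strauss-type bound combined with Proposition~\ref{den}-type local bounds, followed by interpolation. All exponents there depend continuously on $\beta$, and $A_\beta$ is bounded above and below on compact subsets of $(0,N)$. I would therefore retrace that proof, check that each constant is continuous in $\beta$, and conclude that $C_\beta$ is bounded on $\bar I'$. Alternatively, I would try a direct pointwise comparison $I_{\beta/2}\ge c\,I_{\beta'/2}\cdot(\text{cutoff})$ to reduce all $\beta\in\bar I'$ to finitely many, but this seems messier.

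\textbf{Step 3: averaging and passage to the norm.} Write $D_\beta(u)=\int_{\R^N}|I_{\beta/2}\star|u|^{q(\beta)}|^2dx$. By homogeneity it suffices to treat $\|u\|=1$, so $\normq{u}\le1$ and hence $\rho(u)\le1$ by Lemma~\ref{ball property}. Integrating the uniform inequality of Step 2 over $\bar I'$ against $\mu$ gives
\[
\mu(\bar I')\Big(\int_{\R^N}\frac{|u|^r}{|x|^\eta}\Big)^{1/r}\le C\Big(\mu(\bar I')\|\nabla u\|_{L^2}+\int_{\bar I'}D_\beta(u)^{\frac1{2q(\beta)}}d\mu\Big).
\]
By Jensen's inequality (or H\"older's inequality, with $1/(2q(\beta))\le1$), together with $D_\beta(u)\le \max(1,D_\beta(u))$, the last integral is bounded by $\mu(\bar I')+\rho(u)\le\mu(\bar I')+1$. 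Thus the weighted $L^r$ norm is at most a constant $K'$ whenever $\|u\|=1$. Scaling $u\mapsto u/\|u\|$ gives the stated inequality, since the left side is $1$-homogeneous.
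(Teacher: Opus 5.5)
Your proposal is correct and follows the paper's argument. Like the paper, you pick an open interval $I\ni\alpha$ with $\mu(I)>0$ on which the hypotheses persist, apply the single-kernel radial bound of \cite[Proposition 6.5]{MR3568051} for each $\beta\in I$, integrate against $\mu$ over $I$, and convert $\rho$ into $\normq{\cdot}$ through the unit ball property. The only cosmetic difference is that the paper uses the non-homogeneous form $C\bigl[\|\nabla u\|_{L^2}+D_\beta(u)+1\bigr]$ applied to $u/\lambda$ with $\rho(u/\lambda)\le 1$ and then takes the infimum over $\lambda$, which is equivalent to your normalization $\|u\|=1$.

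The uniformity of $C_\beta$ in $\beta$, which you single out as the main obstacle, is simply asserted in the paper (a constant ``depending only on $N$ and $\mu$''), so your treatment is, if anything, more careful there. Retracing the proof could even be avoided if $\beta\mapsto C_\beta$ is measurable and finite on $I$, since some level set $\{\beta\in I: C_\beta\le M\}$ then already has positive $\mu$-measure.
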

\begin{proof}
    We prove for the case $1/q(\alpha)>(N-2)/(N+\alpha)$, since the other case is similar. Choose $r> q(\alpha)$ satisfying \ref{1} and let $u\in\X$. We have
    \[
    \int_{(0,N)}\int_{\R^N}\left|I_{\beta/2}\star|u|^{q(\beta)}\right|^2dxd\mu<\infty,
    \]
    which implies that
    \[
    \int_{\R^N}\left|I_{\beta/2}\star|u|^{q(\beta)}\right|^2dx<\infty
    \]
    for $\mu$-almost all $\beta\in(0,N)$. Since $\alpha$ has the positive neighborhood property, there exists an open interval $I$ containing $\alpha$ with $\mu(I)>0$, such that $r>q(\beta)$,
    \[
    \dfrac{1}{q(\beta)}>\dfrac{N-2}{N+\beta},\quad\text{and }\dfrac{N-2}{2(N-\eta)}<\dfrac{1}{r}<\dfrac{3N+\beta-4}{2(q(\beta)(2N-2-\eta))-(N-\beta-2\eta)}
    \]
    and
     \[
    \int_{\R^N}\left|I_{\beta/2}\star|u|^{q(\beta)}\right|^2dx<\infty
    \]
    for all $\beta\in I$. Now, given $\beta\in I$, inequalities $(6.9)$ and $(6.10)$ in \cite{MR3568051} implies the existence of a constant $C>0$ depending only on $N$ and $\mu$ such that
    \begin{eqnarray}\label{in1}
        \seq{\int_{\R^N}\dfrac{|u(x)|^r}{|x|^\eta}dx}^{1/r}\leq C\left[\seq{\om|\nabla u|^2dx}^{1/2}+\om\left|I_{\beta/2}\star|u|^{q(\beta)}\right|^2dx+1\right].
    \end{eqnarray}
    Integrating both sides of inequality \eqref{in1} over $I$ gives us
    \[
     \seq{\int_{\R^N}\dfrac{|u(x)|^r}{|x|^\eta}dx}^{1/r}\leq K\left[\seq{\om|\nabla u|^2dx}^{1/2}+\int_{(0,N)}\om\left|I_{\beta/2}\star|u|^{q(\beta)}\right|^2dxd\mu+1\right],
    \]
    where the constant $K>0$ depends only on $N,\mu$ and $\mu(I)$. Let $\lambda>0$ be such that
    \[
    \int_{(0,N)}\om\left|I_{\beta/2}\star\left|\dfrac{u}{\lambda}\right|^{q(\beta)}\right|^2dxd\mu\leq1.
    \]
    Then the last inequality gives us
   \[
     \seq{\int_{\R^N}\dfrac{|u(x)|^r}{|x|^\eta}dx}^{1/r}\leq K\left[\seq{\om|\nabla u|^2dx}^{1/2}+2\lambda\right],
   \]
   and after taking the infimum on $\lambda$ we get
   \begin{eqnarray*}
          \seq{\int_{\R^N}\dfrac{|u(x)|^r}{|x|^\eta}dx}^{1/r}&\leq& K^\prime\seq{\|\nabla u\|_{L^2(\R^N)}+\normq{u}}\\
          &=&K^\prime\|u\|.
   \end{eqnarray*}
   for some constant $K^\prime>0$.
    
\end{proof}

Taking $\eta=0$ in Proposition \ref{SCS2} one gets the following corollary.

\begin{corollary}\label{SCS3}
    Let $N\geq3$, and assume that $\alpha\in(1,N)$ has the positive neighborhood property. If
    \begin{enumroman}
        \item either $q(\alpha)<\dfrac{N+\alpha}{N-2}$ and $q_{\text{rad}}^\alpha<r\leq2^*$,
        \item  or $q(\alpha)>\dfrac{N+\alpha}{N-2}$, and $2^*\leq r<q_{\text{rad}}^\alpha$
        \end{enumroman}
       where $q_{\text{rad}}^\alpha=\frac{2(2q(\alpha)(N-1)+N-\alpha)}{3N+\alpha-4}$, then  $\X\hookrightarrow L^r(\R^N)$ continuously.
\end{corollary}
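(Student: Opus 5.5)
The plan is to obtain Corollary \ref{SCS3} directly from Proposition \ref{SCS2} by setting $\eta=0$. The work is to check that the two-sided conditions on $1/r$ in Proposition \ref{SCS2}, with $\eta=0$, are exactly the ranges stated in terms of $2^*$ and $q_{\text{rad}}^\alpha$. With $\eta=0$ we have
\[
\frac{N-2}{2(N-\eta)}=\frac{N-2}{2N}=\frac{1}{2^*}.
\]
The right-hand bound becomes
\[
\frac{3N+\alpha-4}{2q(\alpha)(2N-2)-(N-\alpha)}.
\]
I will compare this with $1/q_{\text{rad}}^\alpha=\frac{3N+\alpha-4}{2(2q(\alpha)(N-1)+N-\alpha)}$. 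The sign in front of $(N-\alpha)$ in the transcribed formula of Proposition \ref{SCS2} looks inconsistent with this. I would therefore go back to \cite[Proposition 6.5]{MR3568051} and its radial inequalities (6.9)–(6.10), which come from the Strauss-type pointwise decay $|u(x)|\lesssim |x|^{-\cdots}$ for radial $u$ interpolated between $\|\nabla u\|_{L^2}$ and the Coulomb term. From there I would confirm that the correct endpoint at $\eta=0$ is $q_{\text{rad}}^\alpha$ as defined in the statement.

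With this identification, condition (i) of Proposition \ref{SCS2} reads $\frac{1}{2^*}<\frac1r<\frac{1}{q_{\text{rad}}^\alpha}$, that is $q_{\text{rad}}^\alpha<r<2^*$. Condition (ii) is the reversed range. The condition $1/q(\alpha)>(N-2)/(N+\alpha)$ is $q(\alpha)<(N+\alpha)/(N-2)$.

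Two small points remain. First, Proposition \ref{SCS2} requires $r>q(\alpha)$. I will check that $q_{\text{rad}}^\alpha\ge q(\alpha)$ in case (i) by direct algebra, and in case (ii) that $2^*>q(\alpha)$; if either fails, I will note that the relevant inequalities in \cite{MR3568051} hold without it. Second, the endpoint $r=2^*$ is not covered by the strict inequality. For $r=2^*$ I will instead invoke the Sobolev inequality $\|u\|_{L^{2^*}}\le S\|\nabla u\|_{L^2}\le S\|u\|$, which holds for all of $\mathcal D^{1,2}(\R^N)$.

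The main obstacle is the exponent bookkeeping: making sure the constant in \eqref{in1} is uniform for $\beta$ in a small interval $I\ni\alpha$. Each $\beta\in I$ has its own admissible window for $r$, and the positive neighborhood property only gives $\mu(I)>0$. I would shrink $I$ so that, by continuity in $\beta$ of $q(\beta)$, of $q^\beta_{\text{rad}}$ and of the constants in \cite{MR3568051}, the strict inequalities persist and the constants stay bounded on $I$, since $[a,b]$ keeps away from $0$ and $N$. After that, integrating over $I$ and dividing by $\mu(I)$ proceeds exactly as in the proof of Proposition \ref{SCS2}.
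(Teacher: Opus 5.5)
Your proposal is correct and follows the paper, whose entire proof is the specialization $\eta=0$ of Proposition~\ref{SCS2}. Your extra checks are genuine refinements of that one-line derivation: as typeset, the upper bound in Proposition~\ref{SCS2} does not reduce to $1/q_{\text{rad}}^\alpha$ at $\eta=0$, and its strict inequality misses $r=2^*$, which the Sobolev inequality covers. For the side condition $r>q(\alpha)$, note that $q_{\text{rad}}^\alpha-q(\alpha)=\frac{(q(\alpha)+2)(N-\alpha)}{3N+\alpha-4}>0$, while in case (ii) $q(\alpha)\geq 2^*$ can genuinely occur. There, rather than appealing to \cite{MR3568051}, simply interpolate between Sobolev at $2^*$ and Proposition~\ref{SCS2} at some $r_0\in(\max\{2^*,q(\alpha)\},q_{\text{rad}}^\alpha)$; this interval is nonempty because $q_{\text{rad}}^\alpha>2^*$ exactly when $q(\alpha)>\frac{N+\alpha}{N-2}$.
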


Let $r\in[1,\infty)$ and $\gamma\in(0,N)$. The Coulomb-Sobolev space $E^{\gamma,r}(\R^N)$ is the space of all weakly differentiable functions for which the norm
\[
\|u\|_{E^{\gamma,r}}:=\seq{\int_{\R^N}|\nabla u|^2+\seq{\om\left|I_{\gamma/2}\star|u|^r\right|^2}^{1/r}}^{1/2}
\]
is finite. We also denote by $E_{\text{rad}}^{\gamma,r}(\R^N)\subset E^{\gamma,r}(\R^N)$ the subspace of radial functions.
We have the following theorem due to C. Mercuri et al. \cite[Theorem 5]{MR3568051}.

\begin{theorem}\label{radial-compact-embedding}
    Let $N\geq2$, $\alpha\in(1,N)$ and $p,q\in[1,\infty)$. If
    \begin{enumroman}
        \item either $\dfrac{1}{p}>\dfrac{N-2}{N+\alpha}$ and\, $\dfrac{1}{2}-\dfrac{1}{N}<\dfrac{1}{q}<\dfrac{3N+\alpha-4}{2(2p(N-1)+N-\alpha)}$,
        \item or  $\dfrac{1}{p}<\dfrac{N-2}{N+\alpha}$ and\, $\dfrac{1}{2}-\dfrac{1}{N}>\dfrac{1}{q}>\dfrac{3N+\alpha-4}{2(2p(N-1)+N-\alpha)}$.
    \end{enumroman}
    Then the embedding $E^{\alpha,p}_{\text{rad}}(\R^N)\hookrightarrow L^q(\R^N)$ is compact.
\end{theorem}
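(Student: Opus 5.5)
The statement immediately above is Theorem \ref{radial-compact-embedding}, quoted from \cite[Theorem 5]{MR3568051}. So the natural "proof" is to check that our setting fits its hypotheses and cite it. I would instead outline how one proves it directly, since the same mechanism is what we need for the compactness half of Theorem \ref{th0}.

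The argument has two ingredients.

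\emph{Step 1: a radial pointwise decay estimate.} For radial $u\in E^{\alpha,p}_{\text{rad}}(\R^N)$ with $\alpha>1$, combine the Strauss-type bound $|u(x)|^2\lesssim |x|^{2-N}\int_{|y|\ge|x|}|\nabla u|^2$ with the Coulomb control. The latter comes from Proposition \ref{den} applied on annuli (or spherical shells), which for radial $|u|^p$ gives
\[
|u(x)|\le C|x|^{-\theta}\|u\|_{E^{\alpha,p}}
\]
for $|x|\ge1$, with an explicit rate $\theta$. Interpolating the two decay rates produces the exponent $q^\alpha_{\mathrm{rad}}$. This is the threshold for which $\int_{|x|>R}|u|^q\,dx\le C R^{-\kappa}\|u\|^q$ with some $\kappa>0$ whenever $q$ lies strictly on the correct side of $q^\alpha_{\mathrm{rad}}$. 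The condition $\alpha>1$ is exactly what makes the shell integration improve the decay.

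\emph{Step 2: local compactness near the origin.} On $B_R$ the gradient bound gives, via Rellich, compactness in $L^q(B_R)$ for every $q<2^*$. For $q>2^*$ in case (ii), one uses instead that near the origin the radial bound $|u(x)|\lesssim|x|^{-(N-2)/2}$ together with the Coulomb term controls $L^q(B_\varepsilon)$ uniformly, with a small factor as $\varepsilon\to0$.

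\emph{Conclusion.} A bounded sequence therefore has a subsequence that converges a.e. and is uniformly small in $L^q$ on $\{|x|>R\}$ (and on $B_\varepsilon$ in case (ii)). A diagonal argument with Vitali's theorem then gives strong convergence in $L^q$.

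\emph{Main obstacle.} The delicate step is the radial decay estimate derived from the Coulomb term and the exact exponent bookkeeping that yields $q^\alpha_{\mathrm{rad}}$. This is precisely what is carried out in \cite{MR3568051}, so in the paper I would simply invoke that result.

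\emph{Application to Theorem \ref{th0}.} For our superposed space we need that a bounded set in $\X$ is bounded in $E^{\beta,q(\beta)}_{\text{rad}}$ for some $\beta$ near $\alpha$. I would get this by choosing $\beta$ with Lemma \ref{a1} and the positive neighborhood property, then applying the theorem at that $\beta$, using that strict inequalities are stable under small perturbations of $\beta$.
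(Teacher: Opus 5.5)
Your proposal follows the paper's route. The paper gives no proof of this statement and simply quotes \cite[Theorem 5]{MR3568051}, and your last paragraph (pick $\beta$ near $\alpha$ outside the exceptional set of Lemma \ref{a1}, then apply the theorem at $\beta$) is exactly how the paper uses it in the proof of Theorem \ref{th0}.

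If you ever write the direct argument out, your sketch needs repair in three places:
\begin{enumerate}
\item Proposition \ref{den} gives only an integral bound, not pointwise decay. Applied on a ball containing the annulus, it yields $\int_{B_{2R}\setminus B_R}|u|^p\,dx\le CR^{(N-\alpha)/2}(\int_{\R^N}|I_{\alpha/2}\star|u|^p|^2dx)^{1/2}$. This bound holds for every $\alpha\in(0,N)$, so $\alpha>1$ is not what improves the decay.
\item The decay $|u(x)|\lesssim|x|^{-(3N+\alpha-4)/(2(p+2))}$ that produces $q^\alpha_{\mathrm{rad}}$ is not obtained by interpolating two decay rates. It comes from a one-dimensional Gagliardo--Nirenberg inequality for the radial profile on annuli, combining the mass bound above with $\int|\nabla u|^2$.
\item In case (ii) you need this improved decay near the origin as well, not the Strauss bound. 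Combining Strauss with the Coulomb mass bound makes $\int_{B_\varepsilon}|u|^q\,dx$ uniformly small only for $q<p+\frac{N-\alpha}{N-2}$. That value is strictly below $q^\alpha_{\mathrm{rad}}$ when $p>\frac{N+\alpha}{N-2}$, so your Step 2 misses part of the range.
\end{enumerate}
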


\subsection{Proof of Theorem \ref{th0}}

\begin{proof}
By Corollary \ref{SCS3} we just need to show that if the inequalities are strict then the embedding is compact. We will prove the assertion for the case where $q(\alpha)<(N+\alpha)/(N-2)$, and the other case is proved in very much the same way. Notice that it suffices to show that the embedding $\X\hookrightarrow L^{q_\gamma}(\R^N)$ is compact. Indeed, if such embedding is compact, the others follow by interpolation. Let $\set{u_n}_{n\in\N}\subset\X$ be a bounded sequence. In particular, there exists a constant $C>0$ such that
\begin{eqnarray}\label{comp1}
    \int_{\R^N}|\nabla u_n|^2\leq C,\quad\text{and}\quad\int_{(0,N)}\int_{\R^N}\left|I_{\beta/2}\star|u_n|^{q(\beta)}\right|^2dxd\mu\leq C
\end{eqnarray}
for all $n\in\N$. Let us define, for each $k\in\N$, the set
\[
A_k:=\set{\beta\in(0,N):\liminf_{n\to\infty}\int_{\R^N}\left|I_{\beta/2}\star|u_n|^{q(\beta)}\right|^2dx\geq k}.
\]
It is easy to see that 
\[
\mu(A_k)\to0\quad\text{as}\quad k\to\infty,
\]
by \eqref{comp1} and Fatou's inequality. Then given $\varepsilon>0$ there exists $k\in\N$ such that
\[
\mu(A_k)<\varepsilon.
\]
Let $I$ be an open interval containing $\alpha$ such that

\begin{eqnarray}\label{comp2}
   q(\beta)<\frac{N+\beta}{N-2},\quad\text{and}\quad q_{\text{rad}}^\beta<q_\gamma<2^*
\end{eqnarray}
for all $\beta\in I$. By hypothesis we have that $\mu(I)>0$. Now, let $k\in\N$ large enough so that 
\[
\mu(A_k)<\mu(I).
\]
This means that there exists $\beta\in I\backslash A_k$ with
\[
\int_{\R^N}\left|I_{\beta/2}\star|u_n|^{q(\beta)}\right|^2dx< k
\]
for infinitely many $n\in\N$. This together with \eqref{comp1} shows that $\set{u_n}_{n\in\N}\subset E^{\beta,q(\beta)}_{\text{rad}}(\R^N)$ is a bounded sequence. And \eqref{comp2} together with Theorem \ref{radial-compact-embedding} implies that the sequence $\set{u_n}_{n\in\N}\subset L^{2\frac{2q(\beta)+\beta}{2+\beta}}(\R^N)$ possesses a subsequence that converges strongly. Since $2\frac{2q(\beta)+\beta}{2+\beta}=q_\gamma$, the conclusion follows.

\end{proof}



\section{Proof of Theorems \ref{th1} and \ref{th2}}
Here we assume all the conditions on the parameters given by Theorem \ref{th1}. Following the notations of the abstract theory of Section \ref{sec2}, we consider
\[
W=\X,\quad\|u\|=\|\nabla u\|_{L^2}+\|u\|_{Q_\gamma}.
\]
On $W$ we use the scaling 
\[
\X\times[0,\infty)\to\X,\quad (u,t)\mapsto u_t
\]
given by
\[
u_t(x)=t^\gamma u(tx),
\]
and 
\[
s=2(\gamma+1)-N
\]
where $\gamma>\frac{N-2}{2}$ is fixed.
\begin{lemma}
    The mapping $\X\times[0,\infty)\to\X$, $(u,t)\mapsto u_t$ is continuous.
\end{lemma}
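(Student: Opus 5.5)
First I compute how the two pieces of the norm scale under $u_t(x)=t^\gamma u(tx)$. For the gradient, $\|\nabla u_t\|_{L^2}^2=t^{2\gamma+2-N}\|\nabla u\|_{L^2}^2=t^{s}\|\nabla u\|_{L^2}^2$. For the nonlocal term I use \eqref{s2} and change variables $x\mapsto x/t$, $y\mapsto y/t$:
\[
\iint\frac{|u_t(x)|^{q(\beta)}|u_t(y)|^{q(\beta)}}{|x-y|^{N-\beta}}\,dx\,dy=t^{2\gamma q(\beta)-N-\beta}\iint\frac{|u(x)|^{q(\beta)}|u(y)|^{q(\beta)}}{|x-y|^{N-\beta}}\,dx\,dy .
\]
Since $2\gamma q(\beta)=2\gamma+\beta+2$, the exponent is $2\gamma+2-N=s$ for every $\beta$. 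Hence $\rho(u_t)=t^s\rho(u)$, and $u_t\in\X$ whenever $u\in\X$; radial symmetry is clearly preserved.

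To prove continuity, fix $(u,t)$ with $t>0$ and take $(u_n,t_n)\to(u,t)$. I split
\[
\|(u_n)_{t_n}-u_t\|\le\|(u_n-u)_{t_n}\|+\|u_{t_n}-u_t\|.
\]
For the first term, linearity of $v\mapsto v_{t_n}$ together with the scaling identities gives $\|\nabla (u_n-u)_{t_n}\|_{L^2}=t_n^{s/2}\|\nabla(u_n-u)\|_{L^2}$ and $\rho((u_n-u)_{t_n})=t_n^s\rho(u_n-u)$. By Lemma \ref{conver1}, $\rho(u_n-u)\to0$, so $\rho((u_n-u)_{t_n})\to 0$ because $t_n$ is bounded. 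Applying Lemma \ref{conver1} again, $\|(u_n-u)_{t_n}\|_{Q_\gamma}\to0$. The same estimates show that $u_t$ is bounded on bounded sets.

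The second term is continuity of $t\mapsto u_t$ at fixed $u$, and I expect it to be the main obstacle, because $\rho$ is not a norm and the nonlocal term is integrated in $\beta$. I would handle it with the density result, Proposition \ref{den1}. Given $\varepsilon$, choose $\varphi\in C_c^\infty$ with $\|u-\varphi\|<\varepsilon$. I would use the radial version, obtained by averaging the approximants over rotations, though since the scaling commutes with rotations the full space argument also suffices. Then
\[
\|u_{t_n}-u_t\|\le\|(u-\varphi)_{t_n}\|+\|\varphi_{t_n}-\varphi_t\|+\|(u-\varphi)_t\|.
\]
The outer two terms are small uniformly for $t_n$ in a compact subset of $(0,\infty)$. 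This follows from the scaling identities combined with the Luxemburg bounds in Corollary \ref{cor ball} and Lemma \ref{lem ball0}, since $\rho(\cdot)$ small forces the norm to be small. For the middle term, $\varphi_{t_n}\to\varphi_t$ uniformly with supports in a fixed compact set $K$, and likewise for the gradients. So $\|\nabla(\varphi_{t_n}-\varphi_t)\|_{L^2}\to0$. For the nonlocal part, $I_{\beta}$ is integrable on $K-K$ uniformly for $\beta\in[a,b]$, because $A_\beta$ is bounded there; this bounds $\rho(\varphi_{t_n}-\varphi_t)$ by $C\sup|\varphi_{t_n}-\varphi_t|^{2q(a)}\mu([a,b])\to0$. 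Then Lemma \ref{conver1} finishes this term.

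Finally, at $t=0$ we have $u_0=0$. Here the scaling identities give $\|\nabla(u_n)_{t_n}\|_{L^2}=t_n^{s/2}\|\nabla u_n\|_{L^2}\to0$ and $\rho((u_n)_{t_n})=t_n^s\rho(u_n)\to0$, since $s>0$ because $\gamma>\frac{N-2}{2}$. Lemma \ref{conver1} (or Lemma \ref{lem ball0}) then yields $(u_n)_{t_n}\to0$ in $\X$.
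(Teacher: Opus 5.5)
Your proof is correct and follows essentially the paper's route. You kill the term $(u_n-u)_{t_n}$ with the exact scaling $\rho(v_t)=t^s\rho(v)$, $\|\nabla v_t\|_{L^2}^2=t^s\|\nabla v\|_{L^2}^2$ together with Lemma \ref{conver1}, and you reduce continuity of $t\mapsto u_t$ at fixed $u$ to $C_c^\infty$ functions via Proposition \ref{den1}, just as the paper does. The differences are cosmetic: you use the norm triangle inequality instead of the quasi-triangle inequality for $\rho$, and you do not rescale to $t=1$. You also supply details the paper calls clear, such as the uniform bound on $\rho(\varphi_{t_n}-\varphi_t)$ over $\beta\in[a,b]$. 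At $t=0$, state explicitly that $\sup_n\rho(u_n)<\infty$, which follows because $\|u_n\|_{Q_\gamma}$ is bounded.
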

\begin{proof}
    First remember that $\|u_n-u\|_{Q_\gamma}\to0$ if and only if $\rho(u_n-u)\to0$. Let $t_j\to t$ in $[0,\infty)$ and $u_j\to u$ in $\X$. Then it suffices to show that $\|\nabla (u_j)_{t_j}-\nabla u_t\|\to0$ and $\rho((u_j)_{t_j}-u_t)\to0$.
    We suppose first that $t>0$. We have, for some constant $C>0$,
    \begin{eqnarray*}
        \m{(u_j)_{t_j}-u_t}\leq C\m{(u_j)_{t_j}-u_{t_j}}\\
        + C\m{u_{t_j}-u_t}.
    \end{eqnarray*}
    It is clear that
    \[
    \m{(u_j)_{t_j}-u_{t_j}}=o(1).
    \]
    Now,
    \begin{eqnarray*}
        \m{u_{t_j}-u_t}=\m{(u_{t_j/t})_t-u_t}\\
    =t^s\m{u_{t_j/t}-u}
    \end{eqnarray*}
    so it suffices to show that $u_{t_j}\to u$ as $t_j\to1$, but this is easy to check when $u\in C_0^{\infty}(\R^N)$, and the conclusion follows from density. This shows that $\rho((u_{j})_{t_j}-u_t)\to0$, which implies $\|(u_j)_{t_j}-u_t\|_{Q_\gamma}\to0$. Since it is easy to check that $\|\nabla (u_j)_{t_j}-\nabla u_t\|_{L^2}\to0$, the conclusion follows. The case $t=0$ is similar and simpler, consequently we omit the proof.
\end{proof}

Conditions $(H_1)$-$(H_3)$ are clearly satisfied. The following lemma shows that $(H_4)$ and $(H_5)$ are also satisfied.

\begin{lemma}
    The mapping $\X\times[0,\infty)\to\X, (u,t)\mapsto u_t$ satisfies $(H_4)$ and $(H_5)$.
\end{lemma}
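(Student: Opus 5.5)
Both $(H_4)$ and $(H_5)$ reduce to computing exactly how the two pieces of the norm $\|u\|=\|\nabla u\|_{L^2}+\normq{u}$ transform under $u_t(x)=t^\gamma u(tx)$.

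\textbf{Gradient term.} A change of variables gives
\[
\int_{\R^N}|\nabla u_t|^2dx=t^{2\gamma+2-N}\int_{\R^N}|\nabla u|^2dx=t^s\int_{\R^N}|\nabla u|^2dx,
\]
so $\|\nabla u_t\|_{L^2}=t^{s/2}\|\nabla u\|_{L^2}$.

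\textbf{Nonlocal term.} For each $\beta$ we have $|u_t|^{q(\beta)}(x)=t^{\gamma q(\beta)}|u|^{q(\beta)}(tx)$. Using the homogeneity $I_{\beta/2}(x/t)=t^{N-\beta/2}I_{\beta/2}(x)$, one gets
\[
\left(I_{\beta/2}\star|u_t|^{q(\beta)}\right)(x)=t^{\gamma q(\beta)-\beta/2}\left(I_{\beta/2}\star|u|^{q(\beta)}\right)(tx).
\]
Hence
\[
\int_{\R^N}\left|I_{\beta/2}\star|u_t|^{q(\beta)}\right|^2dx=t^{2\gamma q(\beta)-\beta-N}\int_{\R^N}\left|I_{\beta/2}\star|u|^{q(\beta)}\right|^2dx .
\]
Since $2\gamma q(\beta)=2\gamma+\beta+2$, the exponent is $2\gamma+2-N=s$, independent of $\beta$. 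This is exactly why $q(\beta)$ was chosen this way. Integrating in $\mu$ gives $\rho(u_t)=t^s\rho(u)$ for every $u$ and $t>0$.

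\textbf{Passing to the Luxemburg norm.} From the previous step,
\[
\rho(u_t/\lambda)=\rho((u/\lambda)_t)=t^s\rho(u/\lambda).
\]
Combine this with the bounds $\tau^{2q(b)}\rho(v)\le\rho(\tau v)\le\tau^{2q(a)}\rho(v)$ for $\tau\le1$, and the reverse bounds for $\tau\ge1$; these follow from $q(a)\le q(\beta)\le q(b)$ on the support of $\mu$. Then $t^s\rho(u/\lambda)\le1$ holds as soon as
\[
\lambda\ \ge\ \normq{u}\max\left(t^{s/(2q(a))},\,t^{s/(2q(b))}\right).
\]
Therefore
\[
\normq{u_t}\le\normq{u}\max\left(t^{s/(2q(a))},\,t^{s/(2q(b))}\right).
\]

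\textbf{Conclusion.} Putting the two terms together,
\[
\|u_t\|\le\max\left(t^{s/2},\,t^{s/(2q(a))},\,t^{s/(2q(b))}\right)\|u\|.
\]
The right-hand side is bounded for $u$ in a bounded set and $t$ in a bounded interval, which gives $(H_4)$. For $t\ge1$ it is at most $t^{s/2}\|u\|$, because $q(\beta)>1$ and $s>0$ (recall $\gamma>\frac{N-2}{2}$). This gives $(H_5)$ with exponent $s/2$; since $(H_5)$ only asks for some positive exponent, this suffices.

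The main point needing care is the Luxemburg norm step. The modular $\rho$ is not homogeneous, so the scaling identity $\rho(u_t)=t^s\rho(u)$ has to be transferred to $\normq{\cdot}$ through the two-sided power bounds coming from $[a,b]$, rather than read off directly.
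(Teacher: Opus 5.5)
Your proof is correct and follows essentially the same route as the paper: you use the exact scalings $\|\nabla u_t\|_{L^2}=t^{s/2}\|\nabla u\|_{L^2}$ and $\rho(u_t)=t^s\rho(u)$, transfer them to the Luxemburg norm via the power bounds coming from $q(a)\le q(\beta)\le q(b)$, and obtain $O(t^{s/2})$ for $t\ge1$. The only differences are bookkeeping. You derive the scaling identity for $\rho$ explicitly, where the paper uses it implicitly, and you get the single bound $\normq{u_t}\le\normq{u}\max\bigl(t^{s/(2q(a))},t^{s/(2q(b))}\bigr)$ valid for all $t$, whereas the paper treats $t<1$ and $t\ge1$ separately and uses a cruder constant bound when $t<1$.
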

\begin{proof}
    Let $K\subset\X$ is a bounded set. Then there exists a constant $M>0$ such that
    \[
    \|\nabla u\|_{L^2}\leq M,\quad \|u\|_{Q_\gamma}\leq M, \quad\forall u\in K,
    \]
    which implies that $\|u/M\|_{Q_\gamma}\leq1$. This implies that $\rho(u/M)\leq 1$ for all $u\in K$ by Lemma \ref{ball property}. Suppose first that $t<1$. Since $\rho(u/M)\leq1$, it is easy to check that
    \[
    \m{u}\leq \max\set{M^{q(a)},M^{q(b)}}+1=:\w{M}.
    \]
    Now, notice that
    \[
    \m{u_t}\leq\m{u}\leq\w{M}
    \]
    for all $t\in[0,1)$, yielding to 
    \[
    \m{\frac{u_t}{\w{M}^{1/q(a)}}}\leq1,\quad\forall t\in[0,1).
    \]
    This shows that 
    \[
    \|u_t\|_{Q_\gamma}\leq \w{M}^{1/q(a)},\quad\forall t\in[0,1),
    \]
    by Lemma \ref{ball property}, consequently
    \[
    \|u_t\|\leq\max\set{M,\w{M}^{1/q(a)}}\seq{t^{s/2}+1}
    \]
    for all $t\in[0,1)$. Suppose now $t\geq1$. Then
\begin{eqnarray*}
    \dfrac{1}{t^s}\int_{(0,N)}\int_{\R^N}\left|I_{\beta/2}\star\left|\frac{u_t}{M}\right|^{q(\beta)}\right|^2dxd\mu\leq1
\end{eqnarray*}
for all $u\in K$. Since $q(a)\leq q(\beta)$ for all $\beta\in[a,b]$, we get
\[
\seq{\dfrac{1}{t^{s/2q(a)}}}^{2q(\beta)}\leq\dfrac{1}{t^s},\quad\forall\, t\geq1.
\]
The two last inequalities combined give
\[
\int_{(0,N)}\int_{\R^N}\left|I_{\beta/2}\star\left|\frac{u_t}{t^{s/2q(a)}M}\right|^{q(\beta)}\right|^2dxd\mu\leq1,\quad\forall\, t\geq1,
\]
which means that
\[
\normq{u_t}\leq t^{s/2q(\beta)}M=O(t^{s/2q(a)}),
\]
uniformly on $u\in K$ as $t\to\infty$. Now, for any $u\in K$, we have
\[
\|u_t\|=t^{s/2}\seq{\|\nabla u\|_{L^2}+O(t^{\frac{s}{2q(a)}-\frac{s}{2}})}=O(t^{s/2})
\]
for $t\geq1$. This concludes the proof.
\end{proof}
 The operators $A_s$ and $B_s$ are given by
 \begin{eqnarray*}
     A_s(u)v=\int_{\R^N}\nabla u\cdot\nabla vdx+\int_{(0,N)}\int_{\R^N}(I_{\beta}\star|u|^{q(\beta)})(I_{\beta}\star|u|^{q(\beta)-2}uv)dxd\mu
 \end{eqnarray*}
 and
 \begin{eqnarray*}
     B_s(u)v=\int_{R^N}|u|^{q_\gamma-2}uvdx.
 \end{eqnarray*}
 It is also clear that $A_s,B_s\in\A_s$. It is clear that $(H_6)$ and $(H_8)$ are satisfied. Condition $(H_7)$ is verified by the following lemma, while $(H_9)$ follows by the compactness of the embedding $\X\hookrightarrow L^{q_\gamma}(\R^N)$.

 \begin{lemma}
     Let $\set{u_j}_{j\in\N}\subset\X$ be a sequence such that $u_j\rightharpoonup u$ weakly for some $u\in\X$ and $A_s(u_j)(u_j-u)\to0$. Then $u_j\to u$ strongly. 
 \end{lemma}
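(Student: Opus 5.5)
We will prove the $(S_+)$ property by splitting $A_s(u_j)(u_j-u)$ into a gradient part and a superposed Hartree part, and showing that each part forces strong convergence of the corresponding piece of the norm. Write
\[
A_s(u_j)(u_j-u)=\int_{\R^N}\nabla u_j\cdot\nabla(u_j-u)\,dx+\int_{(0,N)}\int_{\R^N}\big(I_{\beta}\star|u_j|^{q(\beta)}\big)|u_j|^{q(\beta)-2}u_j(u_j-u)\,dx\,d\mu .
\]
Since $u_j\rightharpoonup u$ in $\X$, and the map $v\mapsto\int\nabla u\cdot\nabla v$ is a bounded linear functional, we have $\int\nabla u\cdot\nabla(u_j-u)\to0$. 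Hence the gradient term equals $\|\nabla(u_j-u)\|_{L^2}^2+o(1)$.

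For the nonlocal term we use convexity. For each fixed $\beta$, the functional
\[
D_\beta(v)=\frac{1}{2q(\beta)}\int_{\R^N}\big|I_{\beta/2}\star|v|^{q(\beta)}\big|^2dx
\]
is convex and $C^1$. Its derivative at $u_j$ applied to $u_j-u$ is exactly the inner integrand above. Convexity therefore gives
\[
D_\beta(u_j)-D_\beta(u)\le D_\beta'(u_j)(u_j-u).
\]
Integrating in $\mu$ and setting $D=\int D_\beta\,d\mu$, we get
\[
\|\nabla(u_j-u)\|_{L^2}^2+D(u_j)-D(u)\le A_s(u_j)(u_j-u)+o(1)=o(1).
\]
On the other hand, Fatou's lemma applied in $x$ (after passing to an a.e.-convergent subsequence) and then in $\beta$ gives $D(u)\le\liminf D(u_j)$. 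The pointwise convergence $u_j\to u$ a.e. comes from the compact embedding $\X\hookrightarrow L^{q_\gamma}(\R^N)$ of Theorem \ref{th0}. Combining the two estimates yields
\[
\|\nabla(u_j-u)\|_{L^2}\to0 \quad\text{and}\quad D(u_j)\to D(u).
\]

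It remains to convert $D(u_j)\to D(u)$ together with a.e. convergence into $\rho(u_j-u)\to0$; by Lemma \ref{conver1} this gives $\|u_j-u\|_{Q_\gamma}\to0$. This is the main obstacle, because the weights $1/(2q(\beta))$ vary with $\beta$. We plan a Brezis--Lieb argument carried out inside $L^2(\R^N\times(0,N),dx\,d\mu)$. Set
\[
F_j(x,\beta)=(2q(\beta))^{-1/2}\,I_{\beta/2}\star|u_j|^{q(\beta)},\qquad F=(2q(\beta))^{-1/2}\,I_{\beta/2}\star|u|^{q(\beta)} .
\]
The steps are as follows.
\begin{enumerate}
\item By Fatou, $\liminf F_j\ge F$ pointwise.
\item Since $\|F_j\|_{L^2(dx\,d\mu)}\to\|F\|_{L^2(dx\,d\mu)}$, the positive parts $(F-F_j)^+$ tend to $0$ in $L^2$ by dominated convergence ($0\le(F-F_j)^+\le F$).
\item Hence $\int(F_j-F)^2\to0$, so $F_j\to F$ in $L^2(dx\,d\mu)$. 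Passing to a subsequence, we may assume domination by a fixed $L^2$ function $G$.
\end{enumerate}
Then we use the pointwise bound
\[
I_{\beta/2}\star|u_j-u|^{q(\beta)}\le 2^{q(b)-1}\big(I_{\beta/2}\star|u_j|^{q(\beta)}+I_{\beta/2}\star|u|^{q(\beta)}\big),
\]
with the constant uniform in $\beta\in[a,b]$, as in \eqref{den7}. By this bound, the square of the left side is dominated by an integrable function along the subsequence. The difficulty is that the left side need not tend to $0$ pointwise in $x$ merely from $u_j\to u$ a.e. To handle this, we would split $|u_j-u|^{q}$ into its restriction to a large ball $B_R$, where Egorov's theorem and the local bound of Proposition \ref{den} give smallness, and a tail, which is controlled by the $L^2$ convergence of $F_j$, mimicking \cite[Remark/Prop.~2.5-type]{MR3568051} Brezis--Lieb for Riesz energies. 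If this splitting proves awkward, we fall back on the uniform convexity of $Q_\gamma(\R^N)$. In that route, weak convergence $u_j\rightharpoonup u$ in $Q_\gamma$ combined with $\rho(u_j)\to\rho(u)$ should give norm convergence. This is obtained by a Radon--Riesz/Kadec-type argument applied to the modular $\rho$, using \eqref{uniform1} to show $\rho((u_j-u)/2)\to0$ from $\rho((u_j+u)/2)\to\rho(u)$. For that step we first note that $D(u_j)\to D(u)$ and a.e. convergence upgrade, via the Brezis--Lieb step above applied in each weighted form, to $\rho(u_j)\to\rho(u)$.

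Since every subsequence has a further subsequence along which $u_j\to u$ in $\X$, the full sequence converges strongly to $u$.
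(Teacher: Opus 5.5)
Your argument is correct once you commit to the fallback, and it takes a different route from the paper.

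\textbf{The paper's route.} The paper subtracts $A_s(u)(u_j-u)$, which is $o(1)$ by weak convergence. It then applies Young's inequality to the two cross terms to get the symmetric bound
\[
o(1)\ge \|\nabla(u_j-u)\|_{L^2}^2+\int_{(0,N)}\frac{1}{q(\beta)}\int_{\R^N}\Big(I_{\beta/2}\star|u_j|^{q(\beta)}-I_{\beta/2}\star|u|^{q(\beta)}\Big)^2dx\,d\mu .
\]
This gives convergence of the Riesz potentials in $L^2(dx\,d\mu)$ directly, without a.e.\ convergence. The paper then invokes the nonlocal Brezis--Lieb lemma \cite[Proposition 4.1]{MR3568051} to conclude $\rho(u_j-u)\to0$.

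\textbf{Your route.} You use only the one-sided convexity inequality. So you need a.e.\ convergence (from Theorem \ref{th0}) and Fatou earlier, to get $\|\nabla(u_j-u)\|_{L^2}\to0$ and $D(u_j)\to D(u)$. Your steps 1--3 then recover the same $L^2(dx\,d\mu)$ convergence, and in particular $\rho(u_j)\to\rho(u)$. Your closing step replaces the external Brezis--Lieb lemma by the Clarkson inequality \eqref{uniform1}. Since $(u_j+u)/2\to u$ a.e., Fatou gives $\liminf\rho((u_j+u)/2)\ge\rho(u)$. Then \eqref{uniform1} yields $\limsup\rho((u_j-u)/2)\le0$, and finally $\rho(u_j-u)\le 2^{2q(b)}\rho((u_j-u)/2)\to0$.

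\textbf{Comparison.} Your argument is self-contained within the paper. It also avoids a point the paper leaves implicit: applying a single-$\beta$ Brezis--Lieb lemma for $\mu$-a.e.\ $\beta$, along a common subsequence, and then integrating in $\beta$. The price is the hypothesis $q(\beta)\ge2$, i.e.\ $2(\gamma-1)\le a$, which \eqref{uniform1} requires; it is in force here.

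\textbf{Drop the ball/tail splitting.} As sketched, it is not the right mechanism. Proposition \ref{den} bounds local $L^p$ norms by the Riesz energy, which is the opposite direction from what you need. Egorov does not control $I_{\beta/2}\star(\chi_{B_R}|u_j-u|^{q(\beta)})$ in $L^2$. Making that route work would require Hardy--Littlewood--Sobolev together with local compactness in $L^{2Nq(\beta)/(N+\beta)}$, uniformly in $\beta$. The Clarkson step makes all of this unnecessary.
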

 \begin{proof}
     For each $\beta\in(0,N)$ we have that
     We have
\begin{align*}
     o(1)+  (A_s(u_j)-A_s(u))(u_j-u)&=\int_{\R^N}|\nabla u_j-\nabla u|^2dx+\int_{(0,N)}\int_{\R^N} (I_{\beta/2}\star |u_j|^{q(\beta)})^2dxd\mu\\
        &-\int_{(0,N)}\int_{\R^N} (I_{\beta/2}\star |u_j|^{q(\beta)}) (I_{\beta/2}\star |u_j|^{q(\beta)-2}u_ju)dxd\mu\\
        &- \int_{(0,N)}\int_{\R^N} (I_{\beta/2}\star |u|^{q(\beta)})(I_{\beta/2}\star |u|^{q(\beta)-2}uu_j)dxd\mu\\
        &+\int_{(0,N)}\int_{\R^N} (I_{\beta/2}\star |u|^{q(\beta)})^2dxd\mu\\
        &\geq \int_{\R^N}|\nabla u_j-\nabla u|^2dx +\\&+\int_{(0,N)}\frac{1}{q(\beta)}\int_{\R^N}\Big(I_{\beta/2}\star|u_j|^{q(\beta)}-I_{\beta/2}\star|u|^{q(\beta)}\Big)^2dxd\mu,
 \end{align*}
    where we have only used Young's inequality to bound the terms $|u_j|^{q(\beta)-2}u_ju$ and $|u|^{q(\beta)-2}uu_j.$ This implies that $\|\nabla u_j-\nabla u\|_{L^2}\to0$ and $\rho(u_j-u)\to0$ by the Nonlocal Brezis-Lieb inequality \cite[Proposition 4.1]{MR3568051}. This completes the proof since this implies that $\|u_j-u\|_{Q_\gamma}\to0$ by Lemma \ref{conver1}.
 \end{proof}

 We have
 \[
 I_s(u)=\dfrac{1}{2}\int_{\R^N}|\nabla u|^2dx+\int_{(0,N)}\int_{\R^N}\dfrac{1}{2q(\beta)}\left|I_{\beta/2}\star|u|^{q(\beta)}\right|^2dxd\mu(\beta),
 \]
 \[
 J_s(u)=\dfrac{1}{q_\gamma}\int_{\R^N}|u|^{q_\gamma}dx,
 \]
 \[
 \M_s=\set{u\in\X: I_s(u)=1},
 \]
 \[
 \w{\Psi}(u)=\dfrac{1}{J_s(u)}=\frac{q_\gamma}{\int_{\R^N}|u|^{q_\gamma}dx}.
 \]
$(H_{10})$ is clearly satisfied since the mapping $t\mapsto I_s(tu)$ is strictly increasing for any $u\in\X\backslash\set{0}$. $(H_{11})$ is also satisfied, and it follows from Corollary \ref{cor ball} \ref{cor ball3}. 
To show that $(H_{12})$ is satisfied we will need the following two lemmas.

 \begin{lemma}\label{eigen-regularity}
        Let $\gamma>(N-2)/2$. Assume that either $b\leq\frac{N-2}{2}$ or $b>\frac{N-2}{2}$ and $\gamma<\frac{(N-2)(a+2)}{2(2b+2-N)}$. If $u\in\x$ is a solution of \eqref{e1}, then $u\in W^{2,\sigma}_{\text{loc}}(\R^N)$ for all $\sigma\geq1$.
    \end{lemma}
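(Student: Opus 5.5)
We will write \eqref{e1} as a linear Schr\"odinger-type equation $-\Delta u+V(x)u=g(x)$ with
\[
V(x)=\int_{(0,N)}\left(I_\beta\star|u|^{q(\beta)}\right)|u|^{q(\beta)-2}\,d\mu(\beta),\qquad g=\lambda|u|^{q_\gamma-2}u,
\]
and then run a bootstrap in the spirit of the regularity argument of \cite{MR3568051} (and \cite{MaMePe1}) for a single Hartree term. The three steps are: (a) put $V$ and $g$ in suitable local Lebesgue spaces; (b) apply Calder\'on--Zygmund $W^{2,\sigma}_{\text{loc}}$ estimates and Sobolev embedding to raise the local integrability of $u$; (c) iterate until $u\in L^\infty_{\text{loc}}$, after which every right-hand side lies in $L^\sigma_{\text{loc}}$ for all $\sigma$.

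For the potential $V$, the starting point is $u\in L^{2^*}(\R^N)\cap L^{q_\gamma}(\R^N)$, by Sobolev and Theorem \ref{emb}. Since $\mu$ is supported in $[a,b]$, $q(\beta)\in[q(a),q(b)]$ and the constants $A_\beta$ are uniformly bounded there. For each $\beta$, Hardy--Littlewood--Sobolev gives
\[
\|I_\beta\star|u|^{q(\beta)}\|_{L^{t}}\le C\,\||u|^{q(\beta)}\|_{L^{s}},\qquad \tfrac1t=\tfrac1s-\tfrac\beta N,
\]
with $C$ uniform on $[a,b]$. Alternatively, one can split $I_\beta=I_\beta\chi_{B_1}+I_\beta\chi_{B_1^c}$, which makes the far part bounded, since $u\in L^{2^*}$ forces $|u|^{q(\beta)}$ to lie in an $L^p$ space paired against an $L^{p'}$ tail. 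The bound on $V$ then follows from Minkowski's integral inequality in $\beta$, giving $V\in L^{\sigma_0}_{\text{loc}}$ uniformly in the measure. For the source, $g\in L^{2^*/(q_\gamma-1)}$, and $q_\gamma<2^*$ is exactly the hypothesis $\gamma>\frac{N-2}{2}$.

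The worst exponent occurs at $\beta=b$, where $q(b)$ is largest, combined with the smallest HLS gain at $\beta=a$. This is where the hypothesis enters. If $b\le\frac{N-2}{2}$, or if $b>\frac{N-2}{2}$ and $\gamma<\frac{(N-2)(a+2)}{2(2b+2-N)}$, then
\[
V\in L^{\sigma_0}_{\text{loc}}\quad\text{for some }\sigma_0>\frac N2 .
\]
One should check that the inequality $\gamma<\frac{(N-2)(a+2)}{2(2b+2-N)}$ is precisely the algebraic condition making $\frac{N}{2}$-integrability hold, using $q(\beta)-2=\frac{\beta+2}{2\gamma}-1$ (or $q(\beta)-1$) together with $u\in L^{2^*}$. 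This verification is the main obstacle; if it fails at the first pass, I would first improve $u$ via a Brezis--Kato/Moser iteration, since $V\ge0$ and $V\in L^{N/2}_{\text{loc}}$ already suffices.

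Once $V\in L^{\sigma_0}_{\text{loc}}$ with $\sigma_0>N/2$, the bootstrap runs as follows. Brezis--Kato (or Trudinger) gives $u\in L^p_{\text{loc}}$ for all $p<\infty$. This improves $|u|^{q(\beta)}$ and hence $I_\beta\star|u|^{q(\beta)}$; the far-field part is controlled globally as before. It follows that $Vu$ and $g$ lie in $L^\sigma_{\text{loc}}$ for large $\sigma$, so Calder\'on--Zygmund gives $u\in W^{2,\sigma}_{\text{loc}}\subset L^\infty_{\text{loc}}$. Repeating this once more shows that $Vu,g\in L^\sigma_{\text{loc}}$ for every $\sigma\ge1$, and hence $u\in W^{2,\sigma}_{\text{loc}}(\R^N)$ for all $\sigma\ge1$.
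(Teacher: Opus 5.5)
Your overall scheme matches the paper's: split $I_\beta$ into near and far parts, control the far part by $u\in L^{2^*}$ and H\"older, bootstrap, and finish with Calder\'on--Zygmund. The gap is that the decisive step (a) is only asserted, and your account of where the hypothesis enters is wrong.

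The condition ``$b\le\frac{N-2}{2}$, or $b>\frac{N-2}{2}$ and $\gamma<\frac{(N-2)(a+2)}{2(2b+2-N)}$'' is equivalent to $q(a)>\frac{2b}{N-2}$. It is not a local $L^{N/2}$ condition at all. It is what makes the far-field piece $\int_{|x-y|\ge1}|u(y)|^{q(\beta)}|x-y|^{\beta-N}\,dy$ bounded uniformly in $x\in B_R$ and $\beta\in[a,b]$. To see this, bound $|u|^{q(\beta)}\le|u|^{q(a)}+|u|^{q(b)}$, and $|x-y|^{\beta-N}\le|x-y|^{b-N}$ on $\{|x-y|\ge1\}$. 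Applying H\"older with $u\in L^{2^*}$, the kernel tail is integrable iff $\frac{2^*(N-b)}{2^*-q(a)}>N$, i.e.\ $q(a)>\frac{2b}{N-2}$. So the critical pairing is the \emph{smallest} power with the \emph{slowest} decay, not ``$q(b)$ with the HLS gain at $a$''.

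In fact, if you estimate the local integrability of $V$ through the decoupled worst case you describe, you need $\frac{2q(b)-2}{2^*}-\frac aN<\frac2N$, i.e.\ $\gamma>\frac{(N-2)(b+2)}{2(a+2)}$, which does not follow from the hypotheses. The local estimate closes only if you keep the same $\beta$ in the power and in the kernel; then $\frac{2q(\beta)-2}{2^*}-\frac\beta N<\frac2N$ reduces to $\gamma>\frac{N-2}{2}$. So the claim $V\in L^{\sigma_0}_{\text{loc}}$ with $\sigma_0>N/2$ is true, but not for the reason you give, and you have not shown it.

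The idea you are missing is that the sign of the nonlocal term makes integrability of $V$ unnecessary for the first bootstrap. Your fallback still presupposes $V\in L^{N/2}_{\text{loc}}$. The paper uses Kato's inequality, $-\Delta|u|\le-\Delta|u|+V|u|\le\lambda|u|^{2/\gamma}|u|$ in $\mathcal D'(\R^N)$, and simply discards $V\ge0$. Since $\gamma>\frac{N-2}{2}$ gives $N/\gamma<2^*$, we have $|u|^{2/\gamma}\in L^{N/2}_{\text{loc}}$, and Brezis--Kato yields $u\in L^\sigma_{\text{loc}}$ for all $\sigma<\infty$.

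Only afterwards does the paper show $\sup_{x\in B_R,\,\beta\in[a,b]}(I_\beta\star|u|^{q(\beta)})(x)<\infty$. The near part uses H\"older with $u\in L^\sigma_{\text{loc}}$ and the kernel $|x-y|^{a-N}$ on $\{|x-y|<1\}$; the far part uses the argument above. This gives $V\in L^\sigma_{\text{loc}}$ for every $\sigma$ at once, and Calder\'on--Zygmund finishes.

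To complete your own route, you would have to carry out the $\beta$-by-$\beta$ estimate just described, with constants uniform on $[a,b]$. One way is global HLS, whose admissibility condition $q(\beta)>\frac{2\beta}{N-2}$ is again supplied by the hypothesis.
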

\begin{proof}
    Let $u\in\x$ be a solution of \eqref{e1}. Then $u$ satisfies
    \[
    -\Delta u=V(x)u,\quad\text{in }\, \mathcal{D^\prime}(\R^N)
    \]
    where $V=V^+-V^-$ with
    \[
    V^+=\lambda|u|^{\frac{2}{\gamma}},\quad V^-=\int_{(0,N)}\seq{I_{\beta}\star|u|^{q(\beta)}}|u|^{q(\beta)-2}d\mu^+(\beta).
    \]
    By Kato's inequality one gets
    \begin{eqnarray}\label{eigen-regularity1}
         -\Delta |u|\leq-\Delta|u|+V^-|u|\leq V^+|u|,\quad\text{in }\,\mathcal{D}^\prime(\R^N).
    \end{eqnarray}
    Since $\gamma>(N-2)/2$, it follows that $V^+\in L^{N/2}_{\text{loc}}(\R^N)$. Then by performing classical bootstrap on \eqref{eigen-regularity1}, one gets that $u\in L^\sigma_{\text{loc}}(\R^N)$ for any $\sigma\in[1,\infty)$. We now claim that this implies $V^-\in L^\sigma_{\text{loc}}(\R^N)$ for all $\sigma<\infty$. It will suffice to show that the integral
    \[
    \int_{B_R(0)}\seq{\int_{(0,N)}(I_{\beta}\star|u|^{q(\beta)})|u|^{q(\beta)-2}d\mu}^\sigma dx
    \]
    is finite for any $R>0$ and $\sigma\geq1$, where $B_R(0)$ is the open ball centered at the origin. Then, by Jensen's inequality and Fubini's theorem, there exists a constant $C>$ such that
    \begin{eqnarray}
        \int_{B_R(0)}\seq{\int_{(0,N)}(I_{\beta}\star|u|^{q(\beta)})|u|^{q(\beta)-2}d\mu}^\sigma dx\nonumber\\
        \leq C\int_{B_R(0)}\int_{(0,N)}\seq{I_\beta\star|u|^{q(\beta)}}^\sigma|u|^{(q(\beta)-2)\sigma}d\mu dx\nonumber\\
        =C\int_{[a,b]}\int_{B_R(0)}\seq{I_\beta\star|u|^{q(\beta)}}^\sigma|u|^{(q(\beta)-2)\sigma}dxd\mu\nonumber\\
        =C\int_{[a,b]}\seq{\int_{B_R\cap\set{|u|<1}}(I_\beta\star|u|^{q(\beta)})^\sigma dx+\int_{B_R\cap\set{|u|\geq1}}(I_\beta\star|u|^{q(\beta)})^\sigma|u|^{(q(b)-2)\sigma}}d\mu\nonumber\\
        \leq C\int_{[a,b]}\int_{B_R}(I_\beta\star|u|^{q(\beta)})^\sigma dxd\mu+C\int_{[a,b]}\int_{B_R}(I_\beta\star|u|^{q(\beta)})^\sigma |u|^{(q(b)-2)\sigma}dxd\mu\nonumber\\
        =: A+B.
    \end{eqnarray}
    Now, by H\"older inequality one has
    \[
    \int_{B_R}(I_\beta\star|u|^{q(\beta)})^\sigma |u|^{(q(b)-2)\sigma}dx\leq \seq{\int_{B_R}(I_\beta\star|u|^{q(\beta)})^{r\sigma}dx}^{1/r}\seq{\int_{B_R}|u|^{(q(b)-2)\sigma r^\prime}dx}^{1/r^\prime}
    \]
    where $r\in[1,\infty)$ and $r^\prime$ is such that $1/r+1/r^\prime=1$. Since $u\in L_{\text{loc}}^\sigma(\R^N)$ for all $\sigma<\infty$, the last integral in the above inequality is finite and does not depend on $\beta.$ Note now that we will be done once we prove that
    \[
    \int_{B_R}(I_\beta\star|u|^{q(\beta)})^\sigma dx
    \]
    is bounded from above uniformly on $\beta$, by a constant that might depend on $\sigma.$ We have
    \begin{eqnarray}
        I_\beta\star|u|^{q(\beta)}(x)&\leq& K\int_{\R^N}\dfrac{|u(y)|^{q(\beta)}}{|x-y|^{N-\beta}}dy\nonumber\\
        &\leq&K\seq{\int_{\set{|x-y|\geq1}}\dfrac{|u(y)|^{q(\beta)}}{|x-y|^{N-b}}dy+\int_{\set{|x-y|<1}}\dfrac{|u(y)|^{q(\beta)}}{|x-y|^{N-a}}dy}\nonumber\\
        &=&:C+D
    \end{eqnarray}
    By H\"older inequality, the fact that $u\in L^\sigma_{\text{loc}}$ and that $\beta\in[a,b]$, it is easy to see that $D$ is bounded from above by a constant that depends neither on $x\in B_R$ nor $\beta\in[a,b]$. Now note that
    \begin{eqnarray*}
        C&=& K\int_{\R ^N\backslash B_1(x)}\dfrac{|u(y)|^{q(\beta)}}{|x-y|^{N-b}}dy\nonumber\\
        &\leq&K\seq{\int_{(\R ^N\backslash B_1(x))\cap\set{|u|<1}}\dfrac{|u(y)|^{q(a)}}{|x-y|^{N-b}}dy+\int_{(\R ^N\backslash B_1(x))\cap\set{|u|\geq1}}\dfrac{|u(y)|^{q(b)}}{|x-y|^{N-b}}dy}\nonumber\\
        &\leq&K\seq{\int_{\R ^N\backslash B_1(x)}\dfrac{|u(y)|^{q(a)}}{|x-y|^{N-b}}dy+\int_{\R ^N\backslash B_1(x)}\dfrac{|u(y)|^{q(b)}}{|x-y|^{N-b}}dy}.
    \end{eqnarray*}
    After employing H\"older inequality in the above expression, one gets
    \begin{eqnarray*}
        C\leq K\seq{\int_{\R ^N\backslash B_1(x)}|u|^{2^*}dx}^{\frac{q(b)}{2^*}}\seq{\int_{\R ^N\backslash B_1(x)}\dfrac{1}{|x-y|^{\frac{2^*(N-b)}{2^*-q(b)}}}}^{\frac{2^*-q(b)}{2^*}}\nonumber\\
        +K\seq{\int_{\R ^N\backslash B_1(x)}|u|^{2^*}dx}^{\frac{q(a)}{2^*}}\seq{\int_{\R ^N\backslash B_1(x)}\dfrac{1}{|x-y|^{\frac{2^*(N-a)}{2^*-q(a)}}}}^{\frac{2^*-q(a)}{2^*}}.
    \end{eqnarray*}
    Those integrals converge for any choice of $\gamma$ if $b\leq (N-2)/2$, and converge for 
    \[
    \gamma<\dfrac{(N-2)(a+2)}{2(2b+2-N)}
    \]
    if $b>(N-2)/2$. Note that those integrals does not depend on $x\in B_R$ neither on $\beta\in[a,b]$. This shows that under those conditions on $\gamma$, the function $I_\beta\star|u|^{q(\beta)}(x)$ is bounded from above uniformly on $\beta\in[a,b]$ and $x\in B_R$. This proves what we wanted. 
    
    Consequently, $V^-\in L^\sigma_{\text{loc}}(\R^N)$ for any $\sigma<\infty$, which implies that $V\in L^{\sigma}_{\text{loc}}(\R^N)$ for any $\sigma<\infty$. By the classical Calderón-Zygmund theorem, this implies that $u\in W^{2,\sigma}_{\text{loc}}(\R^N)$ for all $\sigma<\infty$ (and by Morrey's estimates that $u\in C_{\text{loc}}^{1,\eta}(\R^N)$).
    
\end{proof}

\begin{lemma}\label{Pohosaev eigenvalue}[Pohozaev's identity]
Let $u\in\x$ be a weak solution of \eqref{e1}. Then $u$ satisfies the identity
\begin{eqnarray*}
    \dfrac{N-2}{N}\int_{\R^N}|\nabla u|^2+\int_{(0,N)}\dfrac{N+\beta}{2q(\beta)}\int_{\R^N}\left|I_{\beta/2}\star|u|^{q(\beta)}\right|^2dxd\mu=\lambda\dfrac{ N}{q_\gamma}\int_{\R^N}|u|^{q_\gamma}.
\end{eqnarray*}
\end{lemma}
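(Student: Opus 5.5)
The plan is to follow the classical Pohozaev argument. I would multiply \eqref{e1} by the dilation field $x\cdot\nabla u$, truncated by a cutoff, integrate by parts, and remove the cutoff, treating the superposed Hartree term one $\beta$ at a time before integrating in $\mu$. By Lemma \ref{eigen-regularity}, every weak solution $u\in\x$ belongs to $W^{2,\sigma}_{\text{loc}}(\R^N)$ for all $\sigma<\infty$, and hence to $C^{1,\eta}_{\text{loc}}$. So $x\cdot\nabla u\,\varphi_R$ is a legitimate test function, where $\varphi_R(x)=\varphi(x/R)$ with $\varphi\in C_c^\infty$, $\varphi=1$ on $B_1$ and $\operatorname{supp}\varphi\subset B_2$. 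The equation also holds a.e., which lets us integrate it against this function.

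\textbf{Step 1: local terms.} Using $\Delta u\,(x\cdot\nabla u)=\operatorname{div}\big(\nabla u\,(x\cdot\nabla u)-\tfrac{x}{2}|\nabla u|^2\big)+\tfrac{N-2}{2}|\nabla u|^2$ and $|u|^{q_\gamma-2}u\,x\cdot\nabla u=\tfrac1{q_\gamma}\operatorname{div}(x|u|^{q_\gamma})-\tfrac N{q_\gamma}|u|^{q_\gamma}$, I obtain the gradient and $L^{q_\gamma}$ contributions plus error terms. The errors are bounded by $C\int_{B_{2R}\setminus B_R}\big(|\nabla u|^2+|u|^{q_\gamma}\big)dx$, since $|x||\nabla\varphi_R|\le C$. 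They tend to $0$ because $\nabla u\in L^2$ and $u\in L^{q_\gamma}$ (Theorem \ref{emb}).

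\textbf{Step 2: nonlocal term, for fixed $\beta\in[a,b]$.} Write $(I_\beta\star|u|^{q})|u|^{q-2}u\,x\cdot\nabla u=\tfrac1q(I_\beta\star|u|^q)\,x\cdot\nabla|u|^q$ with $q=q(\beta)$. Integrating by parts gives
\[
-\tfrac Nq\int(I_\beta\star|u|^q)|u|^q\varphi_R-\tfrac1q\int|u|^q\,x\cdot\nabla(I_\beta\star|u|^q)\varphi_R+\text{error}.
\]
I would handle the second piece by writing the double integral with kernel $A_\beta|x-y|^{\beta-N}$ and symmetrizing in $(x,y)$, using $(x\cdot\nabla_x+y\cdot\nabla_y)|x-y|^{\beta-N}=(\beta-N)|x-y|^{\beta-N}$. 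This yields the coefficient $-\tfrac{N+\beta}{2q(\beta)}\int|I_{\beta/2}\star|u|^{q(\beta)}|^2$ in the limit. The symmetrization is cleanest if I first smooth the kernel, cutting off near the diagonal, and pass to the limit by dominated convergence, since $\iint |u|^q|u|^q|x-y|^{\beta-N}<\infty$. The cutoff error is controlled by $\iint|u(x)|^q|u(y)|^q|x-y|^{\beta-N}\big(|\varphi_R(x)-\varphi_R(y)|\cdots\big)$, and it vanishes as $R\to\infty$ by dominated convergence.

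\textbf{Step 3: integration in $\mu$ and assembly.} I then integrate the identity of Step 2 in $\mu$ and pass $R\to\infty$ under $\int_{(0,N)}d\mu$. The dominating function in $\beta$ is $C\int|I_{\beta/2}\star|u|^{q(\beta)}|^2dx$, which is $\mu$-integrable because $\rho(u)<\infty$; the constants are uniform since $A_\beta$ is bounded on $[a,b]$. Collecting the local terms from Step 1 with these nonlocal terms gives the stated identity. The coefficient in front of $\int|\nabla u|^2$ comes out of the dilation computation, so I would double-check it against the normalization in the statement.

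I expect the main obstacle to be the justification in Steps 2--3. This means making the symmetrization rigorous when $x\cdot\nabla(I_\beta\star|u|^q)$ is only locally controlled, and exchanging the limits $R\to\infty$ and the kernel regularization with $\int d\mu$, uniformly in $\beta\in[a,b]$. My first attempt would avoid differentiating $I_\beta\star|u|^q$ altogether. Instead, I would compute $\frac{d}{dt}\big|_{t=1}$ of $\int_{(0,N)}\frac1{2q}\int|I_{\beta/2}\star|u(\cdot/t)|^{q}|^2d\mu$ directly; this quantity scales as $t^{N+\beta}$. I would then identify this derivative with the tested equation through the $W^{2,\sigma}_{\text{loc}}$ regularity and the cutoff.
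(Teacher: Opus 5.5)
Your plan is essentially the paper's proof: the paper tests \eqref{e1} against $v_\delta(x)=\varphi(\delta x)\,x\cdot\nabla u(x)$ (admissible by Lemma \ref{eigen-regularity}), integrates by parts and lets $\delta\to0$ by dominated convergence following Moroz--Van Schaftingen. Your kernel regularization near the diagonal and your $\mu$-domination by $C\int_{\R^N}|I_{\beta/2}\star|u|^{q(\beta)}|^2dx$ fill in details the paper leaves implicit. As you suspected, the computation gives the coefficient $\frac{N-2}{2}$, not $\frac{N-2}{N}$, in front of $\int_{\R^N}|\nabla u|^2$; this is what the paper's own limit $-\frac{N-2}{2}\int_{\R^N}|\nabla u|^2$ yields and what is needed to derive $(H_{12})$, so the statement contains a misprint.
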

\begin{proof}
    Let $\varphi\in C_0^{\infty}(\R^N)$ with $\varphi(x)=1$ for all $|x|\leq1$, and define the function $v_\delta$ by
    \[
    v_\delta(x):=\varphi(\delta x)x\cdot\nabla u(x).
    \]
    for all $\delta\in(0,\infty)$ and $x\in\R^N$. Note that $v_\delta\in\x$ for all $\delta\in(0,\infty)$ by Lemma \ref{eigen-regularity}. Then, testing equation \eqref{e1} against $v_\delta$ gives us
    \[
    \int_{\R^N}\nabla u\cdot\nabla v_\delta+\int_{(0,N)}\int_{\R^N}\seq{I_\beta\star|u|^{q(\beta)}}|u|^{q(\beta)-2}uv_\delta dxd\mu=\lambda\int_{\R^N}|u|^{q_\gamma-2}uv_\delta
    \]

After integrating by parts and employing Lebesgue's dominated convergence theorem, one can easily check that (see \cite[Proposition 3.1]{MOROZ2013153})
\[
\lim_{\delta\to0}\int_{\R^N}\nabla u\cdot\nabla v_\delta=-\dfrac{N-2}{2}\int_{\R^N}|\nabla u|^2,\quad\lim_{\delta\to0}\int_{\R^N}|u|^{q_\gamma-2}uv_\delta=-\dfrac{N}{q_\gamma}\int_{\R^N}|u|^{q_\gamma}
\]
and
\[
\lim_{\delta\to0}\int_{(0,N)}\int_{\R^N}\seq{I_\beta\star|u|^{q(\beta)}}|u|^{q(\beta)-2}uv_\delta dxd\mu^+=-\int_{(0,N)}\dfrac{N+\beta}{2q(\beta)}\int_{\R^N}\left|I_{\beta/2}\star|u|^{q(\beta)}\right|^2dxd\mu,
\]
and this concludes the proof.
\end{proof}

Now we can check condition $(H_{12})$ as follows. If $u\in\X$ is a solution of problem \eqref{e1}, then testing it against $u$ itself gives the equation
\[
\dev{u}+\m{u}=\lambda\int_{\R^N}|u|^{q_\gamma}dx,
\]
and $u$ also satisfies the Pohozaev identity given by Lemma \ref{Pohosaev eigenvalue}
\[
 \dfrac{N-2}{N}\int_{\R^N}|\nabla u|^2+\int_{(0,N)}\dfrac{N+\beta}{2q(\beta)}\int_{\R^N}\left|I_{\beta/2}\star|u|^{q(\beta)}\right|^2dxd\mu=\lambda\dfrac{ N}{q_\gamma}\int_{\R^N}|u|^{q_\gamma}.
\]
Combining the two equations above give us
\[
\dfrac{1}{2}\dev{u}+\int_{\R^N}\dfrac{1}{2q(\beta)}\int_{\R^N}\left|I_{\beta/2}\star|u|^{q(\beta)}\right|^2dxd\mu=\dfrac{\lambda}{q_\gamma}\int_{\R^N}|u|^{q_\gamma}dx,
\]
and this shows that $(H_{12})$ also holds.
\vspace{0.5cm}

We will need the following Pohozaev type identity for the critical points of $\Phi$.

\begin{lemma}\label{pohozaev sup}
    Let $\gamma>(N-2)/2$. Suppose that either $b\leq(N-2)/2$ or $b>(N-2)/2$ and $\gamma<\frac{(N-2)(a+2)}{2(2b+2-N)}$. Let $u\in\x$ be a weak solution of \eqref{superposition problem}. Then $u$ satisfies the identity
    \begin{eqnarray*}
         \dfrac{N-2}{N}\int_{\R^N}|\nabla u|^2+\int_{(0,N)}\dfrac{N+\beta}{2q(\beta)}\int_{\R^N}\left|I_{\beta/2}\star|u|^{q(\beta)}\right|^2dxd\mu=\dfrac{\lambda N}{q_\gamma}\int_{\R^N}|u|^{q_\gamma}\\+\dfrac{N\eta}{r}\int_{\R^N}|u|^r+\dfrac{N}{2^*}\int_{\R^N}|u|^{2^*}.
    \end{eqnarray*}
\end{lemma}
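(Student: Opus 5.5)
The plan has two steps. First, establish enough regularity of $u$ to use $x\cdot\nabla u$ as a test function. Then run the truncated Pohozaev argument of Lemma~\ref{Pohosaev eigenvalue} with the two extra right-hand side terms.

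\textbf{Regularity.} Here $u$ solves $-\Delta u=V(x)u$ in $\mathcal D'(\R^N)$ with $V=V^+-V^-$, where
\[
V^+=\lambda^+|u|^{2/\gamma}+\eta|u|^{r-2}+|u|^{2^*-2},
\]
and $V^-$ is the superposed Hartree coefficient together with $\lambda^-|u|^{2/\gamma}$. Kato's inequality gives $-\Delta|u|\le V^+|u|$. Since $u\in L^{2^*}$ and $r<2^*$, each piece of $V^+$ lies in $L^{N/2}_{\rm loc}$, the critical piece $|u|^{2^*-2}$ included. The Brezis--Kato/Trudinger bootstrap then yields $u\in L^\sigma_{\rm loc}$ for every $\sigma<\infty$.

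With this integrability, the argument of Lemma~\ref{eigen-regularity} applies verbatim, and this is where the hypotheses on $b$ and $\gamma$ enter. Split $I_\beta\star|u|^{q(\beta)}$ into the near part and the far part. The far part is controlled by H\"older with $u\in L^{2^*}$, uniformly in $\beta\in[a,b]$ and in $x\in B_R$. This gives $V^-\in L^\sigma_{\rm loc}$, hence $V\in L^\sigma_{\rm loc}$. Calder\'on--Zygmund then gives $u\in W^{2,\sigma}_{\rm loc}$ for all $\sigma$, so $u\in C^{1,\eta}_{\rm loc}$.

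\textbf{Pohozaev computation.} Fix $\varphi\in C_c^\infty$ with $\varphi=1$ on $B_1$ and test the equation with $v_\delta=\varphi(\delta x)\,x\cdot\nabla u$, which belongs to $\x$ locally by the regularity step. Four limits are needed as $\delta\to0$.
\begin{itemize}
\item The gradient term tends to $-\frac{N-2}{2}\int|\nabla u|^2$.
\item Each local power term $\int|u|^{p-2}u\,v_\delta$ tends to $-\frac Np\int|u|^p$, for $p\in\{q_\gamma,r,2^*\}$. This follows by writing $|u|^{p-2}u\,x\cdot\nabla u=x\cdot\nabla(|u|^p/p)$, integrating by parts, and using dominated convergence with $u\in L^{q_\gamma}\cap L^{2^*}$ (and interpolation for $r$).
\item For each fixed $\beta$, the Hartree term tends to $-\frac{N+\beta}{2q(\beta)}\int|I_{\beta/2}\star|u|^{q(\beta)}|^2$. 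This is the computation of Moroz--Van Schaftingen: symmetrize the double integral, use $x\cdot\nabla_x+y\cdot\nabla_y$ applied to $|x-y|^{\beta-N}$, and note the kernel has homogeneity $\beta-N$.
\end{itemize}
Multiplying the resulting identity by $-2/N$ gives exactly the stated identity.

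\textbf{Main obstacle.} The hardest step is exchanging $\lim_{\delta\to0}$ with the $\mu$-integral in the Hartree term. For this I need a bound on the $\delta$-truncated quantity that is uniform in $\delta$ and $\mu$-integrable in $\beta$. I would first bound the truncated double integral, for each $\beta$, by $C\int|I_{\beta/2}\star|u|^{q(\beta)}|^2dx$, with $C$ independent of $\beta\in[a,b]$ and $\delta$. The constants $A_\beta$ and $N+\beta$ are bounded on $[a,b]$, and the cutoff gradient term $\delta x\cdot\nabla\varphi(\delta x)$ is bounded. Since $\rho(u)<\infty$, this majorant is $\mu$-integrable, so dominated convergence in $\beta$ lets me pass the $\delta\to0$ limit inside the $\mu$-integral.
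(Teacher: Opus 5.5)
Up to the last line, your argument is the paper's. You use the splitting $V=V^+-V^-$, Kato's inequality and a bootstrap as in Lemma~\ref{eigen-regularity}, and the near/far estimate of $I_\beta\star|u|^{q(\beta)}$, which is where the hypotheses on $b$ and $\gamma$ enter. Then come Calder\'on--Zygmund and the test function $v_\delta=\varphi(\delta x)\,x\cdot\nabla u$ with the Moroz--Van Schaftingen computation. You also fill in details the paper leaves implicit: the case $\lambda<0$, the critical piece $|u|^{2^*-2}\in L^{N/2}_{\text{loc}}$ via Brezis--Kato, and the exchange of $\lim_{\delta\to0}$ with the $\mu$-integral. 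Your majorant $C\int_{\R^N}|I_{\beta/2}\star|u|^{q(\beta)}|^2dx$ for that exchange follows from the symmetrized Hartree term, because $x\mapsto x\varphi(\delta x)$ is Lipschitz uniformly in $\delta$.

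The final step, however, is wrong. After multiplying by $-1$, your four limits give
\[
\frac{N-2}{2}\int_{\R^N}|\nabla u|^2+\int_{(0,N)}\frac{N+\beta}{2q(\beta)}\int_{\R^N}\left|I_{\beta/2}\star|u|^{q(\beta)}\right|^2dx\,d\mu=\frac{\lambda N}{q_\gamma}\int_{\R^N}|u|^{q_\gamma}+\frac{\eta N}{r}\int_{\R^N}|u|^{r}+\frac{N}{2^*}\int_{\R^N}|u|^{2^*}.
\]
Multiplying by $-2/N$ instead rescales \emph{every} term. The Hartree coefficient becomes $\frac{N+\beta}{Nq(\beta)}$ and the right-hand side becomes $\frac{2\lambda}{q_\gamma}\int|u|^{q_\gamma}+\cdots$, so you do not get the displayed formula. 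In fact no manipulation can produce it, because the displayed formula is misprinted: subtracting it from the identity above gives $\frac{(N-2)^2}{2N}\int|\nabla u|^2=0$, so it fails for every nontrivial solution.

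The coefficient $\frac{N-2}{2}$ is the correct one. Formally it comes from $\frac{d}{d\tau}\Phi(u(\cdot/\tau))\big|_{\tau=1}=0$, with the gradient, Hartree and local terms scaling like $\tau^{N-2}$, $\tau^{N+\beta}$, $\tau^{N}$. It is also what the paper actually uses:
\begin{enumerate}
\item the proof of Lemma~\ref{Pohosaev eigenvalue} computes the gradient limit as $-\frac{N-2}{2}\int|\nabla u|^2$;
\item deriving $(H_{12})$ from the Nehari and Pohozaev identities only works with $\frac{N-2}{2}$;
\item in the proof of Lemma~\ref{ps-sup}, the multipliers $\frac1N+\frac{N-2}{2s}$ and $-\frac1s$ leave the coefficient $\frac1N$ on $\int|\nabla u|^2$ only if the Pohozaev gradient coefficient is $\frac{N-2}{2}$.
\end{enumerate}
So you should conclude with the $\frac{N-2}{2}$ identity and flag the typo in the statement, rather than forcing agreement with the printed formula through an invalid rescaling.
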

\begin{proof}
    Note that $u$ satisfies
    \[
    -\Delta u=V(x)u.\quad\text{in }\mathcal{D}^{\prime}(\R^N)
    \]
    where $V=V^+-V^-$ with
    \[
    V^+=\lambda|u|^{q_\gamma-2}+\eta|u|^{r-2}+|u|^{2^*-2},
    \]
    and
    \[
    V^-=\int_{(0,N)}\seq{I_{\beta/2}\star|u|^{q(\beta)}}|u|^{q(\beta)-2}d\mu(\beta).
    \]
    As in the proof of Lemma \ref{eigen-regularity}, it is easy to see that $V\in L_{\text{loc}}^\sigma(\R^N)$ for all $\sigma\geq1$. Then by the classical Calderón-Zygmund theorem, this implies that $u\in W^{2,\sigma}_{\text{loc}}(\R^N)$ for all $\sigma<\infty$ (and by Morrey's estimates that $u\in C_{\text{loc}}^{1,\eta}(\R^N)$). Now, we can carry out the usual integration by parts argument after testing equation \eqref{superposition problem} against a suitable truncation of $x\cdot\nabla u(x)$ (see the proof of Lemma \ref{Pohosaev eigenvalue}. See also \cite[Proposition 3.1]{MOROZ2013153}.) and this concludes the proof.
\end{proof}

We now prove a local $(PS)$ condition for $\Phi$. Let
\[
S=\inf_{u\in \D^{1,2}\backslash{\set{0}}}\dfrac{\dev{u}}{\seq{\cri{u}}^{2/2^*}}.
\]
be the best Sobolev constant.

\begin{lemma}\label{ps-sup}
    Assume $2(\gamma-1)\leq a<b\leq2(\gamma+1)$, and that either $b\leq(N-2)/2$ or $b>(N-2)/2$ and $\gamma<\frac{(N-2)(a+2)}{2(2b+2-N)}$ hold. Assume $q(b)<N/(N-2)$, $\lambda>0$ and $r\in[q_\gamma,2^*)$ hold, then $\Phi$ satisfies the $(PS)_c$ condition for all $0<c<\frac{1}{N}S^{N/2}$.
\end{lemma}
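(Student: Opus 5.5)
Let $(u_n)\subset\X$ be a $(PS)_c$ sequence with $0<c<\frac1N S^{N/2}$, so that $\Phi(u_n)\to c$ and $\Phi'(u_n)\to0$ in $W^*$. The argument has three steps: show $(u_n)$ is bounded, pass to a weak limit, and rule out concentration at the critical exponent.

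\textbf{Step 1: boundedness.} This is where I expect the main obstacle. For a plain Sobolev-type functional one would use $\Phi(u_n)-\frac1{\theta}\Phi'(u_n)u_n$. Here the nonlocal term has homogeneity $2q(\beta)$, which may exceed $q_\gamma$, and $\lambda>0$ contributes a negative $L^{q_\gamma}$ term. So a single combination does not control everything.

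\begin{enumerate}
\item[(a)] Form $\Phi(u_n)-\frac12\Phi'(u_n)u_n$. The gradient term cancels. The nonlocal term appears with coefficient $\frac1{2q(\beta)}-\frac12<0$. The right-hand side gives $\lambda(\frac12-\frac1{q_\gamma})|u_n|_{q_\gamma}^{q_\gamma}+\eta(\frac12-\frac1r)|u_n|_r^r+\frac1N|u_n|_{2^*}^{2^*}$. This controls the positive power terms by the nonlocal energy plus $c+o(1)\|u_n\|$.
\item[(b)] Form $\Phi(u_n)-\frac1{2^*}\Phi'(u_n)u_n$. This gives $\frac1N\|\nabla u_n\|^2$ plus the nonlocal term with coefficient $\frac1{2q(\beta)}-\frac1{2^*}>0$. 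The positive sign uses $q(\beta)\le q(b)<N/(N-2)=2^*/2$. The remaining terms are $\lambda(\frac1{2^*}-\frac1{q_\gamma})|u_n|_{q_\gamma}^{q_\gamma}\le0$ and a similar $\eta$ term.
\end{enumerate}

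To close, I would try the following, in this order. First, use the scaling $(u_n)_t$ of Section 4 to mimic the Pohozaev identity of Lemma \ref{pohozaev sup}, in the spirit of Jeanjean's trick or a Pohozaev–Palais–Smale sequence. Then combine that Pohozaev functional with the Nehari relation so that the $\lambda$ and $\eta$ terms are absorbed. If this does not work, fall back to an interpolation argument: bound $|u_n|_{q_\gamma}^{q_\gamma}$ by Theorem \ref{emb} in terms of $\|\nabla u_n\|$ and $\rho(u_n)$ with total power $<2$ after the estimate in (b). Then deduce $\sup_n\|\nabla u_n\|_2+\rho(u_n)<\infty$, and hence boundedness in $\|\cdot\|$ by Corollary \ref{cor ball}.

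\textbf{Step 2: weak limit.} By reflexivity, pass to a subsequence $u_n\weak u$ in $\X$. Theorem \ref{th0} and interpolation give $u_n\to u$ in $L^{q_\gamma}$ and in $L^r$ for $r<2^*$. Weak continuity of the superposed Riesz operator (Propositions \ref{weak riesz}, \ref{weak convergence riesz}), which uses $b\le2(\gamma+1)$, shows that $u$ is a weak solution. Lemma \ref{pohozaev sup} then applies to $u$, and together with the Nehari identity gives $\Phi(u)\ge0$.

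\textbf{Step 3: ruling out concentration.} Set $w_n=u_n-u$. Apply the classical Brezis–Lieb lemma to $\nabla$ and to $L^{2^*}$, and the nonlocal Brezis–Lieb inequality \cite[Proposition 4.1]{MR3568051} integrated in $\beta$. This yields $\|\nabla w_n\|^2+\rho$-type terms $\le|w_n|_{2^*}^{2^*}+o(1)$ and $c-\Phi(u)\ge\frac12\|\nabla w_n\|^2-\frac1{2^*}|w_n|_{2^*}^{2^*}+o(1)$. Suppose $\|\nabla w_n\|^2\to\ell>0$. The Sobolev inequality gives $\ell\ge S^{N/2}$, hence $c\ge\frac1N S^{N/2}$, a contradiction. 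So $\|\nabla w_n\|\to0$ and the nonlocal part of $w_n$ tends to $0$. Then $A_s(u_n)(u_n-u)\to0$, and $(H_7)$ gives strong convergence.
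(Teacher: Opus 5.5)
Your Steps 2 and 3 follow the paper's outline: the weak limit $u$ is a solution, and Lemma \ref{pohozaev sup} plus the Nehari identity give $\Phi(u)=\frac{\gamma(r-q_\gamma)}{sr}\eta|u|_r^r+\frac1N|u|_{2^*}^{2^*}\ge0$, followed by Brezis--Lieb and the Sobolev threshold. Step 1, however, is the heart of the lemma, and it is not proved; neither route you propose works as stated.

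\textbf{Your first route.} A Jeanjean-type or Pohozaev--Palais--Smale argument only \emph{produces} a special PS sequence along which $\frac{d}{dt}\Phi((u_n)_t)|_{t=1}\to0$. The lemma asserts $(PS)_c$ for \emph{every} PS sequence, which is what Theorem \ref{m1} requires. For an arbitrary sequence you cannot test $\Phi'(u_n)$ against $x\cdot\nabla u_n$, which need not even belong to $\X$. The Pohozaev identity is available only for genuine solutions, through the regularity of Lemma \ref{eigen-regularity}.

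\textbf{Your fallback.} The interpolation argument fails for a structural reason. Under $u\mapsto u_t$, the quantities $\|\nabla u\|_2^2$, $\int_{\R^N}|I_{\beta/2}\star|u|^{q(\beta)}|^2dx$ and $|u|_{q_\gamma}^{q_\gamma}$ all scale exactly like $t^s$. So the Gagliardo--Nirenberg inequality behind Theorem \ref{emb} is scale invariant, and the best it gives is $\lambda J_s(u)\le(\lambda/\lambda_1)I_s(u)$. (Theorem \ref{emb} as stated has total degree $q_\gamma>2$, not $<2$.) Take $u_n=u_{t_n}$ with $t_n\to\infty$ and $\lambda J_s(u)$ large compared with $I_s(u)$. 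This shows your inequality (b), plus any such bound, is compatible with $\|u_n\|\to\infty$ once $\lambda$ is not small. The lemma must hold for every $\lambda>0$, and Theorem \ref{th2} uses $\lambda$ near $\lambda_k$.

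\textbf{The missing idea: normalize with the scaling.} Suppose $\|u_n\|\to\infty$. Write $u_n=(\widetilde u_n)_{\widetilde t_n}$ with $\widetilde u_n\in\mathcal M_s$ (a bounded set) and $\widetilde t_n=I_s(u_n)^{1/s}\to\infty$, so that $\|u_n\|=O(\widetilde t_n^{\,s/2})$. Since $\gamma2^*-N=\frac{N}{N-2}s$ and $\gamma r-N=s+\gamma(r-q_\gamma)$, dividing $\Phi'(u_n)u_n=o(\widetilde t_n^{\,s/2})$ by $\widetilde t_n^{\,s}$ gives
\[
\|\nabla\widetilde u_n\|_2^2+\rho(\widetilde u_n)=\lambda|\widetilde u_n|_{q_\gamma}^{q_\gamma}+\eta\,\widetilde t_n^{\,\gamma(r-q_\gamma)}|\widetilde u_n|_r^r+\widetilde t_n^{\,2s/(N-2)}|\widetilde u_n|_{2^*}^{2^*}+o(1).
\]
The left side is bounded and every term on the right is nonnegative. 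So the extra factor $\widetilde t_n^{\,2s/(N-2)}\to\infty$ forces $|\widetilde u_n|_{2^*}^{2^*}=O(\widetilde t_n^{\,-2s/(N-2)})$.

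Interpolating with the bound of $\widetilde u_n$ in $L^l$, $l\in(q^\alpha_{\mathrm{rad}},q_\gamma)$ (Theorem \ref{th0}), yields $|\widetilde u_n|_{q_\gamma}\to0$ and $\widetilde t_n^{\,\gamma(r-q_\gamma)}|\widetilde u_n|_r^r\to0$; this is where $r<2^*$ is used.

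Finally, divide $2^*\Phi(u_n)-\Phi'(u_n)u_n$ by $\widetilde t_n^{\,s}$. The gradient and nonlocal coefficients are $\frac{2^*}{2}-1$ and $\frac{2^*}{2q(\beta)}-1$, both positive because $q(b)<N/(N-2)$. Hence $\widetilde u_n\to0$ in $\X$, contradicting $I_s(\widetilde u_n)=1$. Boundedness comes from the critical term, which is exactly the term your combination (b) discards.

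\textbf{A smaller slip in Step 3.} From $\|\nabla w_n\|_2^2+(\cdots)\le|w_n|_{2^*}^{2^*}+o(1)$ and $c-\Phi(u)\ge\frac12\|\nabla w_n\|_2^2-\frac1{2^*}|w_n|_{2^*}^{2^*}+o(1)$ you cannot conclude $c-\Phi(u)\ge\frac1N\ell$. The first inequality is a lower bound on $|w_n|_{2^*}^{2^*}$, while the second needs an upper bound. The paper avoids this by writing $c=\Phi(u_n)-\frac1{2^*}\Phi'(u_n)u_n+o(1)$. There the gradient and nonlocal terms carry the nonnegative weights $\frac1N$ and $\frac1{2q(\beta)}-\frac1{2^*}$. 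Lower semicontinuity and $\|\nabla u_n\|_2^2\ge\|\nabla u\|_2^2+S^{N/2}+o(1)$ then give $c\ge\frac1NS^{N/2}+\Phi(u)\ge\frac1NS^{N/2}$.
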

\begin{proof}
    Let $\s{u_n}\subset\X$ be a $(PS)_c$ sequence for $\Phi$ with $0<c<\frac{1}{N}S^{N/2}$. Then
    \begin{eqnarray}\label{ps-sup1}
        \frac{1}{2}\int_{\R^N}|\nabla u_n|^2+\int_{(0,N)}\frac{1}{2q(\beta)}\int_{\R^N}\left|I_{\beta/2}\star|u_n|^{q(\beta)}\right|^2dxd\mu-\frac{\lambda}{q_\gamma}\int_{\R^N}|u_n|^{q_\gamma}\nonumber\\
        -\frac{\eta}{r}\int_{\R^N}|u_n|^r-\frac{1}{2^*}\int|u_n|^{2^*}=c+(1)
    \end{eqnarray}
    and
    \begin{eqnarray}\label{ps-sup2}
        \int_{\R^N}|\nabla u_n|^2+\int_{(0,N)}\int_{\R^N}\left|I_{\beta/2}\star|u_n|^{q(\beta)}\right|^2dxd\mu-\lambda\int_{\R^N}|u_n|^{q_\gamma}\nonumber\\
        -\eta\int_{\R^N}|u_n|^{r}-\int_{\R^N}|u_n|^{2^*}=o(1)\|u_n\|.
    \end{eqnarray}
    We will first show that $\s{u_n}$ is a bounded sequence. Suppose $\|u_n\|\to\infty$ for a renamed subsequence. Set
    \[
    t_n=\dfrac{1}{I_s(u_n)^{1/s}},\quad\w{u}_n=(u_n)_{t_n},\quad \w{t_n}=\dfrac{1}{t_n}=I_s(u_n)^{1/s}.
    \]
    Then $\w{u}_n\in\mathcal{M}_s$ and
    \[
    u_n=(\w{u}_n)_{\w{t}_n}=\w{t}_n^\gamma\w{u}_n(\w{t}_n\cdot).
    \]
    Since $\M_s$ is a bounded manifold, $\s{\w{u}_n}$ is a bounded sequence, and since $\|u_n\|\to\infty$ we also have $I_s(u_n)\to\infty$ and hence $\w{t}_n\to\infty$. It implies
    \[
    \|u_n\|=\|(\w{u}_n)_{\w{t}_n}\|=O(\w{t}_n^{s/2}),
    \]
    then \eqref{ps-sup1} and \eqref{ps-sup2} can be written as
    \begin{eqnarray}\label{ps-sup3}
        \w{t}_n^s\seq{\frac{1}{2}\int_{\R^N}|\nabla \w{u}_n|^2+\int_{(0,N)}\frac{1}{2q(\beta)}\int_{\R^N}\left|I_{\beta/2}\star|\w{u}_n|^{q(\beta)}\right|^2dxd\mu}\nonumber\\
        =\frac{\w{t}_n^s\lambda}{q_\gamma}\int_{\R^N}|\w{u}_n|^{q_\gamma}+\frac{\w{t}_n^{r\gamma-N}\eta}{r}\int_{\R^N}|\w{u}_n|^r\nonumber\\
        +\frac{\w{t}_n^{s\frac{N}{N-2}}}{2^*}\int_{\R^N}|\w{u}_n|^{2^*}+c+o(1)
        \end{eqnarray}
        and
        \begin{eqnarray}\label{ps-sup4}
            \w{t}_n^s\seq{\int_{\R^N}|\nabla\w{u}_n|^2+\int_{(0,N)}\int_{\R^N}\left|I_{\beta/2}\star|\w{u}_n|^{q(\beta)}\right|^2dxd\mu}=\w{t}_n^s\lambda\int_{\R^N}|\w{u}_n|^{q_\gamma}\nonumber\\
            +\w{t}_n^{r\gamma-N}\eta\int_{\R^N}|\w{u}_n|^r+\w{t}_n^{s\frac{N}{N-2}}\int_{\R^N}|\w{u}_n|^{2^*}+o(\w{t}_n^{s/2})
        \end{eqnarray}
        respectively. Since $\w{t}_n\to\infty$, dividing both sides of \eqref{ps-sup4} by $\w{t}_n^s$ gives
        \begin{eqnarray*}
            \g{\w{u}_n}+\m{\w{u}_n}=\lambda\e{\w{u}_n}\nonumber\\
            +\w{t}_n^{\gamma(r-q_\gamma)}\eta\int_{\R^N}|\w{u}_n|^rdx+\w{t}_n^{s\frac{2}{N-2}}\int_{\R^N}|\w{u}_n|^{2^*}dx+o(1).
        \end{eqnarray*}
        Since the left hand side is bounded and $\lambda,\mu\geq0$, one gets
       \begin{eqnarray}\label{ps-sup5}
           \int_{\R^N}|\w{u}_n|^{2^*}dx=O\seq{\w{t}_n^{\frac{-2s}{N-2}}}.
       \end{eqnarray}
        Since $\s{\w{u}_n}$ is bounded in $L^l(\R^N)$ for any $l\in(q^\alpha_{\text{rad}},q_\gamma)$, the interpolation inequality
        \[
        \int_{\R^N}|\w{u}_n|^{r}dx\leq\seq{\int_{\R^N}|\w{u}_n|^ldx}^{1-\theta}\seq{\int_{\R^N}|\w{u}_n|^{2^*}dx}^{\theta}
        \]
        where $\theta=(r-l)/(2^*-l)$ implies 
      \begin{eqnarray}\label{ps-sup6}
          \w{t}_n^{\gamma(r-q_\gamma)}\int_{\R^N}|\w{u}_n|^rdx=O\seq{\w{t}_n^{\frac{-2s}{N-2}\frac{r-l}{2^*-l}+\gamma(r-q_\gamma)}}
      \end{eqnarray}
      and the inequality
      \[
      \frac{-2s}{N-2}\frac{r-l}{2^*-l}+\gamma(r-q_\gamma)<0
      \]
      is equivalent to $r<2^*$. This together with \eqref{ps-sup6} implies
      \[
      \w{t}_n^{\gamma(r-q_\gamma)}\int_{\R^N}|\w{u}_n|^rdx\to0
      \]
      as $n\to\infty$. By interpolation again together with \eqref{ps-sup5} one also sees that $\int_{\R^N}|\w{u}_n|^{q_\gamma}dx\to0$ as $n\to\infty$. Now multiplying \eqref{ps-sup5} by $2^*$, subtracting \eqref{ps-sup4} from it, and dividing by $\w{t}_n^s$, one gets
      \[
      \seq{\dfrac{2^*}{2}-1}\g{\w{u}_n}+\int_{(0,N)}\seq{\dfrac{2^*}{2q(\beta)}-1}\int_{\R^N}\left|I_{\beta/2}\star|\w{u}_n|^{q(\beta)}\right|^2dxd\mu=o(1).
      \]
      This implies that $\w{u}_n\to0$ in $\X$, contradicting the fact that $\w{u}_n\in\M_s$ for all $n\in\N$, which shows that $\s{u_n}$ is indeed a bounded sequence. Now, since $\X$ is a reflexive Banach space, a renamed subsequence of $\s{u_n}$ converges weakly to some $u\in\X$. Then $u_n$ also converges to $u$ strongly in $L^l(\R^N)$ for all $l\in(q_{\text{rad}}^\alpha,2^*)$ where  $\alpha\in(1,N)$ satisfies the positive neighborhood property. It also converges weakly in $L^{2^*}(\R^N)$, and a.e. in $\R^N$ for a further subsequence. Now passing to the limit in $\Phi^\prime(u_n)u$ and $\Phi^\prime(u_n)v$ for $v\in\X$ and using Proposition \ref{weak riesz} in the Appendix, we have
      \begin{eqnarray}\label{ps-sup7}
          \dev{u}+\m{u}-\lambda\int_{\R^N}|u|^{q_\gamma}dx\nonumber\\
          -\eta\int_{\R^N}|u|^rdx-\int_{\R^N}|u|^{2^*}dx=0,
      \end{eqnarray}
      and $u$ also satisfies the Pohozaev identity
      \begin{eqnarray}\label{ps-sup8}
          \dfrac{N-2}{N}\int_{\R^N}|\nabla u|^2+\int_{(0,N)}\dfrac{N+\beta}{2q(\beta)}\int_{\R^N}\left|I_{\beta/2}\star|u|^{q(\beta)}\right|^2dxd\mu-\dfrac{\lambda N}{q_\gamma}\int_{\R^N}|u|^{q_\gamma}\nonumber\\-\dfrac{N\eta}{r}\int_{\R^N}|u|^r-\dfrac{N}{2^*}\int_{\R^N}|u|^{2^*}=0
      \end{eqnarray}
      by Lemma \ref{pohozaev sup}. Set $v_n=u_n-u$. We will show that $v_n\to0$ in $\X$ for a renamed subsequence. By the classical Brezis-Lieb lemma and Mercuri et al. \cite[Proposition 4.1]{MR3568051}, we have
      
      \begin{eqnarray}\label{ps-sup9}
          \dev{u_n}-\dev{u}=\dev{v_n}+o(1)
      \end{eqnarray}
      \begin{eqnarray}\label{ps-sup10}
          \m{u_n}-\m{u}\nonumber\\\geq\m{v_n}+o(1)
      \end{eqnarray}
      respectively, and
      \begin{eqnarray}\label{ps-sup11}
          \int_{\R^N}|u_n|^{2^*}dx-\int_{\R^N}|u|^{2^*}dx=\int_{\R^N}|v_n|^{2^*}dx+o(1).
      \end{eqnarray}
      Subtracting \eqref{ps-sup7} from \eqref{ps-sup2} and combining with \eqref{ps-sup9}-\eqref{ps-sup11} gives
      \begin{eqnarray}\label{ps-sup12}
          \dev{v_n}+\m{v_n}\leq\om|v_n|^{2^*}dx+o(1)\nonumber\\
          \leq S^{-2^*/2}\seq{\dev{v_n}}^{2^*/2}+o(1).
      \end{eqnarray}
      So it suffices to show that $\dev{v_n}\to0$ for a renamed subsequence. Suppose this is not the case. Then \eqref{ps-sup12} gives
      \begin{eqnarray}\label{ps-sup13}
          \dev{v_n}\geq S^{N/2}+o(1).
      \end{eqnarray}
      Dividing \eqref{ps-sup2} by $2^*$ and subtracting it from \eqref{ps-sup1} give us
      \begin{eqnarray}\label{ps-sup14}
          c=\dfrac{1}{N}\dev{u_n}+\int_{(0,N)}\seq{\dfrac{1}{2q(\beta)}-\dfrac{1}{2^*}}\int_{\R^N}\left|I_{\beta/2}\star|u_n|^{q(\beta)}\right|^2dxd\mu^+\nonumber\\
          -\lambda\seq{\dfrac{1}{q_\gamma}-\dfrac{1}{2^*}}\int_{\R^N}|u|^{q_\gamma}dx-\eta\seq{\dfrac{1}{r}-\dfrac{1}{2^*}}\om|u|^{r}dx+o(1).
      \end{eqnarray}
      Since 
      \[
      \dev{u_n}\geq S^{N/2}+\dev{u}+o(1)
      \]
      by \eqref{ps-sup9} and \eqref{ps-sup13}, and
      \[
      \m{u_n}\geq\m{u}+o(1)
      \]
      by \eqref{ps-sup10}, \eqref{ps-sup14} gives
\begin{eqnarray*}
    c\geq\dfrac{1}{N}S^{N/2}+\dfrac{1}{N}\dev{u}+\int_{(0,N)}\seq{\dfrac{1}{2q(\beta)}-\dfrac{1}{2^*}}\int_{\R^N}\left|I_{\beta/2}\star|u|^{q(\beta)}\right|^2dxd\mu\nonumber\\
          -\lambda\seq{\dfrac{1}{q_\gamma}-\dfrac{1}{2^*}}\int_{\R^N}|u|^{q_\gamma}dx-\eta\seq{\dfrac{1}{r}-\dfrac{1}{2^*}}\om|u|^{r}dx+o(1).
\end{eqnarray*}
Multiplying \eqref{ps-sup7} by $\frac{1}{N}+\frac{N-2}{2s}$ and \eqref{ps-sup8} by $-\frac{1}{s}$, where $s=2(\gamma+1)-N$, adding them together, and then subtracting the resulting expression from the above inequality gives
\begin{eqnarray}\label{ps-sup15}
    c\geq\dfrac{1}{N}S^{N/2}+\eta\seq{\frac{1}{N}+\frac{N-2}{2s}-\frac{N}{r s}-\frac{1}{r}+\frac{1}{2^*}}\int_{\R^N}|u|^rdx+\frac{1}{N}\int_{\R^N}|u|^{2^*}dx.
\end{eqnarray}
Note that
\[
\frac{1}{N}+\frac{N-2}{2s}-\frac{N}{r s}-\frac{1}{r}+\frac{1}{2^*}\geq0
\]
if and only if $r\geq q_\gamma$ This together with \eqref{ps-sup15} and $\eta>0$ implies that $c\geq \frac{1}{N}S^{N/2}$, which is a contradiction. 
 \end{proof}

 \begin{proof}[Proof of Theorem \ref{th1}]
     Using Lemma \ref{ps-sup}, we will employ Theorem \ref{m1} with $c^*=S^{N/2}/N$. Let $\lambda\in\R$ and $m\geq1$. The variational functional associated with problem \eqref{superposition problem} is given by
\begin{eqnarray*}
    \Phi(u)=I_s(u)-\lambda J_s(u)
    -\dfrac{\eta}{r}\int_{\R^N}|u|^rdx-\dfrac{1}{2^*}\int_{\R^N}|u|^{2^*}dx.
\end{eqnarray*} 
In particular,
    \begin{equation}\label{212-1}
        \Phi(u_t)=t^s\seq{1-\frac{\lambda}{\w{\Psi}(u)}}-\frac{\eta t^{\gamma r-N}}{r}\om|u|^r dx-\frac{t^{\gamma2^* -N}}{2^*}\om|u|^{2^*}dx,
    \end{equation}
    for all $u\in\M_s$ and $t\geq0$. Now take $k\geq1$ large enough so that $\lambda<\lambda_{k+1}$ and $\lambda_{k+m-1}<\lambda_{k+m}$. Since $\M_s\backslash\w{\Psi}_{\lambda_{k+m}}$ is an open symmetric set with index $k+m-1$ by Theorem \ref{Theorem 301} \ref{301-3}, it has a compact symmetric subset $C$ of index $k+m-1$  (see the proof of Proposition 3.1 in Degiovanni and Lancelotti \cite{MR2371112}). We will employ Theorem \ref{m1} with $A_0=C$ and $B_0=\w{\Psi}_{\lambda_{k+1}}$. 

    Let $R>\rho>0$ and let
    \begin{eqnarray*}
        X&=&\set{u_t:u\in A_0, 0\leq t\leq R},\\
        A&=&\set{u_R:u\in A_0},\\
        B&=&\set{u_{\rho}:u\in B_0}.
        \end{eqnarray*}

    Note that if $u\in A_0$ then
    \[
    \frac{1}{q_\gamma}\om|u|^{q_\gamma}dx>\frac{1}{\lambda_{k+m}}
    \]
    Consequently, since $A_0$ is bounded on $L^l(\R^N)$ for all $l\in(\p,q_\gamma)$, the interpolation inequalities
    \[
    |u|_{q_\gamma}^{q_\gamma}\leq|u|_l^{\theta_1 l}|u|_r^{(1-\theta_1)r},\quad |u|_{q_\gamma}^{q_\gamma}\leq|u|_l^{\theta_2l}|u|_{2^*}^{(1-\theta_2)2^*},
    \]
    where $q_\gamma=\theta_1l+(1-\theta_1)r$ and $q_\gamma=\theta_2 l+(1-\theta_2)2^*$ give that $\om|u|^r dx$ and $\cri{u}$ are bounded away from zero on $A_0$ by constants that depend only on $m$. It is also easy to see that $|\lambda|/\w{\Psi}(u)$ is also bounded from above on $A_0$. Equation \eqref{212-1} now gives us that
   \begin{equation}\label{212-2}
        \Phi(u_t)\leq c_1t^{s}-\eta c_2t^{\sigma\beta-N}-c_3t^{\sigma 2^*-N}
   \end{equation}
    for all $u\in A_0$, $t\geq0$ and some constants $c_i>0$, $i=1,2,3.$ The last inequality implies that 
    \[
    \sup_{u\in A}\Phi(u)\leq c_1 R^s-c_3R^{\gamma 2^*-N}\leq0
    \]
    for large $R>0$.

    Now, for all $u\in B_0$
    \[
    \Phi(u_t)\geq t^s\seq{1-\frac{\lambda}{\lambda_{k+1}}+o(1)}\quad\text{as }t\to0
    \]
    uniformly on $B_0$ since $B_0$ is a bounded set. This gives
    \[
    \inf_{u\in B}\Phi(u)>0
    \]
    for a sufficiently small $\rho>0$. This implies the first inequality in \eqref{m1-2}, and from \eqref{212-2} we also have
    \[
    \sup_{u\in X}\Phi(u_t)\leq\max_{t \geq0}\set{c_1t^s-\eta t^{\gamma r-N}}=\frac{\w{C}}{\eta^{s/\gamma(r-q_\gamma)}},
    \]
    for some constant $\w{C}>0$, which implies the second inequality in \eqref{m1-2} if $\eta$ is large enough. The desired conclusion now follows from Theorem \ref{m1-2}.
     
 \end{proof}

 \begin{proof}[Proof of Theorem \ref{th2}]
     We will employ Theorem \ref{m1} with $c^*=S^{N/2}/N$. Let $\varepsilon\in(0,\lambda_{k+m}-\lambda_{k+m-1})$. Then
        \[
        i(\M_s\backslash\w{\Psi}_{\lambda_{k+m-1+\varepsilon}})=k+m-1
        \]
        by Theorem \ref{Theorem 301} \ref{301-3}. Since $\M_s\backslash\w{\Psi}_{\lambda_{k+m-1}+\varepsilon}$ is an open symmetric set of index $k+m-1$, then it has a compact symmetric subset $C$ with $i(C)=k+m-1$ (see the proof of Proposition 3.1 in Degiovanni and Lancelotti \cite{MR2371112}). We will apply Theorem \ref{m1} with $A_0=C$ and $B_0=\w{\Psi}_{k}$. If $\lambda_1=\cdots=\lambda_k$ then $B_0=\w{\Psi}_{\lambda_k}=\M_s$ and consequently
        \[
        i(\M_s\backslash B_0)=0\leq k-1.
        \]
        by Proposition \ref{Proposition 300}. If $\lambda_{l-1}<\lambda_{l}=\lambda_k=\cdots=\lambda_{k+m-1}$ for some $2\leq l\leq k$, then
        \[
        i(\M_s\backslash B_0)=i(\M_s\backslash\w{\Psi}_{l})=l-1\leq k-1
        \]
        by Theorem \ref{Theorem 301} \ref{301-3}. 

         Let $R>\rho>0$ and let
    \begin{eqnarray*}
        X&=&\set{u_t:u\in A_0, 0\leq t\leq R},\\
        A&=&\set{u_R:u\in A_0},\\
        B&=&\set{u_{\rho}:u\in B_0}.
        \end{eqnarray*}
        For $u\in\M_s$ and $t\geq0$ we have
        \begin{equation}\label{210-1}
            \Phi(u_t)=t^s\seq{1-\frac{\lambda}{\w{\Psi}(u)}}-\frac{t^{\frac{N}{N-2}s}}{2^*}\om|u|^{2^*}dx.
        \end{equation}
        Since $\M_s$ is bounded, we have
        \[
        \Phi(u_t)\geq t^s\seq{1-\frac{\lambda}{\w{\Psi}(u)}+o(1)}\quad\text{as }t\to0 
        \]
        uniformly on $B_0$. This implies that there exists $\rho>0$ small enough so that
        \[
        \inf_{u\in B}\Phi(u)>0.
        \]
        Now note that, for any $u\in A_0\subset\M_s\backslash\w{\Psi}_{\lambda_{k+m-1}+\varepsilon}$,
        \[
        \frac{1}{q_\gamma}\om|u|^{q_\gamma}dx>\frac{1}{\lambda_{k+m-1}+\varepsilon}>\frac{1}{\lambda_{k+m}}
        \]
        since $\varepsilon<\lambda_{k+m}-\lambda_{k+m-1}$. Now using the fact that $\M_s$ is bounded in $L^l(\R^N)$ for all $l\in(p_{\text{rad}}^\alpha,q_\gamma)$ together with the interpolation inequality
        \[
        |u|_{q_\gamma}^{q_\gamma}\leq|u|_l^{l(1-\theta)}|u|_{2^*}^{2^*\theta}
        \]
        where $q_\gamma=(1-\theta)l+\theta2^*$, we have that $|u|_{2^*}$ is bounded away from zero on $A_0$ for a constant that does not depend on $\varepsilon$. 

        \eqref{210-1} together with $\lambda_{k+m-1}=\lambda_k$ gives
        \[
        \Psi(u_t)=t^s\seq{1-\frac{\lambda}{\lambda_{k}+\varepsilon}}-c_1t^{\frac{N}{N-2}s}
        \]
        for some constant $c_1>0$, which yields to
        \[
        \sup_{u\in A}\Phi(u)\leq R^s-c_1R^{\frac{N}{N-2}s}\leq0
        \]
        if $R$ is sufficiently large. This implies the first inequality in \eqref{m1-2}. We also have
        \[
        \sup_{u\in X}\Phi(u)\leq\sup_{t\ge0}\set{t^s\seq{1-\frac{\lambda}{\lambda_{k}+\varepsilon}}-c_1t^{\frac{N}{N-2}s}}=\frac{2}{Nc_1^{(N-2)/2}}\seq{1-\frac{\lambda}{\lambda_{k}+\varepsilon}}^{N/2},
        \]
        which gives us the second inequality in \eqref{m1-2} if $\lambda$ is close enough of $\lambda_k$ and $\varepsilon$ is sufficiently small. The desired conclusion now follows from Theorem \ref{m1}.
 \end{proof}

\section{Appendix}

\begin{proposition}
There exists a constant $K>0$ such that
    \begin{eqnarray*}
        \int_{(0,N)}\int_{B_\rho(a)}|u|^{q(\beta)}dyd\mu\leq K\max\set{1,\rho^N}\m{u}
    \end{eqnarray*}
    for any $u\in\x$, $a\in\R^N$ and $\rho>0$. 
\end{proposition}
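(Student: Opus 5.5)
The plan is to apply Proposition \ref{den} for each fixed $\beta$ and then integrate in $\beta$ against $\mu$. I expect the first three steps to go through. The last step, which turns the resulting square root into the linear dependence on the energy that the statement asserts, is where I expect the argument to break.

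\emph{Step 1 (fixed $\beta$).} Fix $\beta\in[a,b]$, since $\mu$ vanishes off $[a,b]$, and set $\zeta=\beta$, $p=q(\beta)$ in Proposition \ref{den}. For $u\in\x$ and $\mu$-a.e.\ $\beta$ this gives
\[
\int_{B_\rho(a)}|u|^{q(\beta)}dy\leq C_\beta\,\rho^{\frac{N-\beta}{2}}\seq{\int_{\R^N}\left|I_{\beta/2}\star|u|^{q(\beta)}\right|^2dx}^{1/2}.
\]

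\emph{Step 2 (uniform constant).} I need $\sup_{\beta\in[a,b]}C_\beta<\infty$. I would reread the proof of \cite[Proposition 2.3]{MR3568051}. It tests $I_{\beta/2}\star|u|^{q(\beta)}$ against a bump adapted to $B_\rho(a)$, so $C_\beta$ is built from $A_{\beta/2}^{-1}$ and from a scale-invariant integral of $I_{\beta/2}$ over the unit ball. Both depend continuously on $\beta\in[a,b]\subset(0,N)$ and are bounded there; this is the reason $\mu$ is assumed to vanish near $0$ and $N$. The power of the radius satisfies $\rho^{(N-\beta)/2}\leq\max\{1,\rho^N\}$, because $0<(N-\beta)/2<N$.

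\emph{Step 3 (integrate in $\beta$).} Integrating the bound of Step 1 over $(0,N)$ with respect to $\mu$ and applying Cauchy--Schwarz in $\mu$ gives
\[
\int_{(0,N)}\int_{B_\rho(a)}|u|^{q(\beta)}dyd\mu\leq C\max\{1,\rho^N\}\,\mu([a,b])^{1/2}\,\rho(u)^{1/2}.
\]

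\emph{Step 4 (main obstacle: linear dependence).} The right-hand side of the statement is linear in $\rho(u)$, whereas Step 3 gives $\rho(u)^{1/2}$. I would first try to pass from one to the other using Corollary \ref{cor ball} together with \eqref{p1}. When $\rho(u)\geq 1$ we have $\rho(u)^{1/2}\leq\rho(u)$, and the stated bound follows with $K=C\mu([a,b])^{1/2}$. When $\rho(u)<1$ there is a genuine problem. Replacing $u$ by $tu$ multiplies the left side by $t^{q(\beta)}$ but multiplies $\rho(u)$ by $t^{2q(\beta)}$, so for small $t$ an inequality linear in $\rho(u)$ cannot hold with a uniform $K$. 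I would therefore check how the Proposition is used later in the Appendix, in Propositions \ref{weak riesz} and \ref{weak convergence riesz}. If it is only applied to families that are bounded in $\x$, or in the normalization $\rho(u)\geq1$, the argument closes in that regime. Otherwise the correct conclusion is the square-root form of Step 3, or equivalently $K\max\{1,\rho^N\}(1+\rho(u))$ using $\sqrt{x}\leq 1+x$, and I would record that as the statement actually established.
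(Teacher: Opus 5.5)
Your Steps 1--3 are exactly the ``simple adaptation'' of \cite[Proposition 2.3]{MR3568051} that the paper invokes. On $B_\rho(a)$ one has the pointwise bound $I_{\beta/2}\star|u|^{q(\beta)}\geq A_{\beta/2}(2\rho)^{\beta/2-N}\int_{B_\rho(a)}|u|^{q(\beta)}dy$. This gives $C_\beta=2^{N-\beta/2}A_{\beta/2}^{-1}|B_1|^{-1/2}$, which is bounded for $\beta\in[a,b]$. Your diagnosis in Step 4 is also correct. The inequality as printed is false, so no argument can close that step. What the paper's one-line proof actually establishes is your square-root bound.

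Your scaling heuristic becomes rigorous once you compare the extreme exponents $q(a)\leq q(\beta)\leq q(b)$, where $a,b$ are the endpoints of $[a,b]$. For $t\in(0,1]$, the left-hand side at $tu$ is at least $t^{q(b)}$ times its value at $u$, while $\rho(tu)\leq t^{2q(a)}\rho(u)$. Moreover,
$2q(a)-q(b)=1+\frac{2a-b+2}{2\gamma}>1-\frac{N-2}{2\gamma}>0$
under the standing assumption $\gamma>\frac{N-2}{2}$. Letting $t\to0$ would therefore force $\int_{(0,N)}\int_{B_\rho(a)}|u|^{q(\beta)}dy\,d\mu=0$ for every $u\in\x$.

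For an arbitrary $\gamma>0$ the same works with a small change. Restrict the left-hand side to $\beta\in[\beta_0,\beta_0+\varepsilon]$, where $\beta_0$ is the infimum of the support of $\mu$. Then compare $t^{q(\beta_0+\varepsilon)}$ with $t^{2q(\beta_0)}$, choosing $\varepsilon$ small enough that $q(\beta_0+\varepsilon)<2q(\beta_0)$. For $\mu=\delta_\alpha$ the comparison is simply $t^{q(\alpha)}$ against $t^{2q(\alpha)}$.

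One correction to your fallback: boundedness in $\x$ does not rescue the linear form, because the counterexample uses small multiples of a fixed function. Only the normalization $\rho(u)\geq1$ does. On bounded sets, what you get is a uniform bound, and that is all that is ever needed.

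Nothing downstream needs the linear form. The only consequence drawn from the proposition is the corollary $\x\hookrightarrow L^{q(\cdot)}_{\text{loc}}(\R^N)$. That follows from your bound, or from $K\max\set{1,\rho^N}(1+\rho(u))$, together with the unit ball property. Propositions \ref{locomp}, \ref{weak convergence riesz} and \ref{weak riesz} rely on H\"older's inequality and Theorem \ref{th0} instead.
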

\begin{proof}
    It is a simple adaptation of the proof of  \cite[Proposition 2.3]{MR3568051}.
\end{proof}
Let us define the space
\[
    L^{q(\cdot)}_{\text{loc}}(\R^N)
=\set{u:\R^N\to\R: \int_{(0,N)}\int_{K}|u|^{q(\beta)}dxd\mu(\beta)<+\infty, K\subset\subset\R^N}.    \]
We say that $u_n\to u$ in $L^{q(\cdot)}_{\text{loc}}(\R^N)$ if, for every compact set $K\subset\R^N$, $\|u_n-u\|_{L^{q(\cdot)}(K)}\to0$ where $\|\cdot\|_{L^{q(\cdot)}(K)}$ is a Luxemburg-type norm given by
\[
\|u\|_{L^{q(\cdot)}(K)}:=\inf\set{\lambda>0:\int_{(0,N)}\int_{K}\left|\frac{u}{\lambda}\right|^{q(\beta)}dxd\mu(\beta)\leq1}.
\]
\begin{corollary}
    We have that $\x\hookrightarrow L^{q(\cdot)}_{\text{loc}}(\R^N)$.
    
\end{corollary}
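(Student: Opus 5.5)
The claim is that for every compact $K\subset\R^N$ there is $C_K>0$ with $\|u\|_{L^{q(\cdot)}(K)}\le C_K\|u\|$ for all $u\in\x$. The plan is to combine the preceding Proposition with the modular properties of $\rho$ already developed for $\normq{\cdot}$. Fix a compact $K$ and choose $\rho_K>0$ and a point $a\in\R^N$ with $K\subset B_{\rho_K}(a)$. For $u\in\x$ the Proposition gives
\[
\int_{(0,N)}\int_K |u|^{q(\beta)}\,dx\,d\mu(\beta)\le K_0\max\set{1,\rho_K^N}\,\rho(u)=:M_K\,\rho(u).
\]
Here $K_0$ is the constant of the Proposition, renamed to avoid a clash with the set $K$. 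In particular $\x\subset L^{q(\cdot)}_{\text{loc}}(\R^N)$. Only the $Q_\gamma$-part of the norm is needed, so in fact the stronger statement $\Q\hookrightarrow L^{q(\cdot)}_{\text{loc}}(\R^N)$ holds.

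The next step is to pass from this modular inequality to a norm inequality. Denote the modular on $K$ by $\rho_K(v)=\int_{(0,N)}\int_K|v|^{q(\beta)}dx\,d\mu$. Since $\mu$ is supported in $[a,b]$, both $\rho_K$ and $\rho$ satisfy two-sided homogeneity bounds. For $t\ge1$,
\[
t^{q(a)}\rho_K(v)\le\rho_K(tv)\le t^{q(b)}\rho_K(v),\qquad t^{2q(a)}\rho(v)\le\rho(tv)\le t^{2q(b)}\rho(v),
\]
and the reversed inequalities hold for $t\le1$. Take $u\neq0$ and set $\lambda=\normq{u}$. By Lemma \ref{ball property}\ref{ball propety3} we have $\rho(u/\lambda)=1$, hence $\rho_K(u/\lambda)\le M_K$.

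Now choose $\tau=\max\set{1,M_K}^{1/q(a)}\ge1$. Then
\[
\rho_K\bigl(u/(\tau\lambda)\bigr)\le\tau^{-q(a)}\rho_K(u/\lambda)\le M_K/\max\set{1,M_K}\le1.
\]
By the definition of the Luxemburg norm this gives
\[
\|u\|_{L^{q(\cdot)}(K)}\le\tau\normq{u}\le\tau\|u\|,
\]
which is the continuous embedding with $C_K=\tau$. Since both norms are positively homogeneous, the linear inclusion is continuous. Equivalently, $u_n\to u$ in $\x$ implies $u_n\to u$ in $L^{q(\cdot)}_{\text{loc}}(\R^N)$.

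The main obstacle is not this final step but the Proposition itself, which the paper only asserts as an adaptation of \cite[Proposition 2.3]{MR3568051}. If I had to verify it, I would apply Proposition \ref{den} with $\zeta=\beta$ and $p=q(\beta)$ for each fixed $\beta\in[a,b]$. The key point is that its constant $C(\beta)$ stays bounded uniformly on the compact set $[a,b]$, because $A_\beta$ is continuous there. I would then bound $R^{(N-\beta)/2}\le\max\set{1,R^N}$.

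This step needs care because Proposition \ref{den} yields only a square root, $\bigl(\int|I_{\beta/2}\star|u|^{q(\beta)}|^2\bigr)^{1/2}$, rather than the modular to the first power. Integrating in $\beta$ and applying Cauchy--Schwarz with $\mu$ finite gives a bound by $C\max\set{1,R^N}\,\mu([a,b])^{1/2}\rho(u)^{1/2}$. That bound works just as well for the argument above: replacing $\rho$ by $\rho^{1/2}$ only changes the exponent used in choosing $\tau$.
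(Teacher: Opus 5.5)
Your argument is correct and follows essentially the paper's route: the corollary is read off from the preceding Appendix Proposition, and you supply the modular-to-Luxemburg conversion that the paper leaves implicit (evaluate at $u/\normq{u}$, where $\rho=1$, then rescale by $\tau=\max\{1,M_K\}^{1/q(a)}$). Your caveat about the Proposition is also right: Proposition \ref{den} with Cauchy--Schwarz in $\beta$ only yields a bound by $\rho(u)^{1/2}$, and the Proposition as literally stated, linear in $\rho(u)$, fails under $u\mapsto tu$ with $t\to0$ already for $\mu=\delta_\beta$. This does not affect your proof, because you only use the bound where $\rho(u/\lambda)=1$; the only change is the extra factor $\mu([a,b])^{1/2}$ in $M_K$.
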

\begin{proposition}\label{locomp}
    Suppose that $\beta\leq 2(\gamma+1)$ for all $\beta\in[a,b]$, so that $q(\beta)\leq q_\gamma$. Let $\s{u_n}\subset\X$ be a sequence such that $u_n\rightharpoonup u$ weakly for some $u\in\X$. Then, up to a subsequence, we have that $u_n\to u$ strongly in $L^{q(\cdot)}_{\text{loc}}(\R^N)$.
\end{proposition}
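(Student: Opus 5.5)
The plan is to reduce the claim to strong convergence in the single space $L^{q_\gamma}(K)$ for each compact $K\subset\R^N$, and then pass to the variable exponent $q(\beta)$ by H\"older's inequality, uniformly in $\beta\in[a,b]$. By Lemma \ref{conver1}-type reasoning applied to the Luxemburg norm $\|\cdot\|_{L^{q(\cdot)}(K)}$ (the same unit ball argument as Lemma \ref{ball property}, since $q(a)\le q(\beta)\le q(b)$ on the support of $\mu$), it suffices to show that
\[
\int_{(0,N)}\int_K |u_n-u|^{q(\beta)}\,dx\,d\mu(\beta)\to 0 .
\]

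First, I would obtain the local strong convergence. Since $u_n\weak u$ in $\X$, the sequence is bounded, and by Theorem \ref{th0} (the Remark following it) the embedding $\X\hookrightarrow L^{q_\gamma}(\R^N)$ is compact. Passing to a subsequence, $u_n\to u$ strongly in $L^{q_\gamma}(\R^N)$. The limit is identified with $u$ because weak convergence in $\X$ implies weak convergence in $L^{q_\gamma}$ by continuity of the embedding (Theorem \ref{emb}). In particular, $\|u_n-u\|_{L^{q_\gamma}(K)}\to0$.

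Second, I would make the H\"older step uniform in $\beta$. The hypothesis $\beta\le 2(\gamma+1)$ gives $q(\beta)\le q_\gamma$ for $\beta\in[a,b]$, so for every such $\beta$
\[
\int_K|u_n-u|^{q(\beta)}dx\le |K|^{1-q(\beta)/q_\gamma}\,\|u_n-u\|_{L^{q_\gamma}(K)}^{q(\beta)}.
\]
Setting $\varepsilon_n=\|u_n-u\|_{L^{q_\gamma}(K)}\to0$, for $\varepsilon_n\le1$ the right-hand side is bounded by $\max\{1,|K|\}\,\varepsilon_n^{q(a)}$, independently of $\beta$. Integrating against $\mu$, which is finite and supported in $[a,b]$, then gives
\[
\int_{(0,N)}\int_K|u_n-u|^{q(\beta)}\,dx\,d\mu\le \mu([a,b])\max\{1,|K|\}\,\varepsilon_n^{q(a)}\to0.
\]

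Finally, I would convert modular convergence into norm convergence: for any $\lambda\in(0,1)$, the modular of $(u_n-u)/\lambda$ is at most $\lambda^{-q(b)}$ times the above, which is $\le1$ for $n$ large, so $\|u_n-u\|_{L^{q(\cdot)}(K)}\le\lambda$. A diagonal argument over $K=\overline{B_j}$, $j\in\N$, then yields a single subsequence that works for every compact set.

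The main point needing care is the compactness in the first step. It rests on the standing assumption $\mathcal P_\mu\cap(1,N)\ne\emptyset$ and on Theorem \ref{th0}. If one preferred to avoid that assumption, I would instead use Rellich on $\mathcal D^{1,2}$, namely compactness in $L^2_{\rm loc}$ together with boundedness in $L^{2^*}$, and interpolate to get $L^{t}_{\rm loc}$ convergence for all $t<2^*$. This covers $q_\gamma<2^*$ and makes the argument purely local.
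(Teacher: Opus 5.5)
Your proposal is correct and follows the paper's argument: a H\"older bound on compact sets that is uniform in $\beta\in[a,b]$ (using $q(a)\le q(\beta)\le q_\gamma$), integrated against the finite measure $\mu$ and combined with the compact embedding $\X\hookrightarrow L^{q_\gamma}(\R^N)$ from Theorem~\ref{th0}. You also make explicit two steps the paper leaves implicit, namely identifying the limit and passing from modular convergence to convergence in the Luxemburg norm; and because the convergence in $L^{q_\gamma}$ is global, your diagonal argument is not needed.
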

\begin{proof}
    Note that, by H\"older inequality, we have
    \begin{eqnarray*}
        \int_{B_{\rho}(p)}|u|^{q(\beta)}dy\leq\max\set{|B_{\rho}|^{1-\frac{q(a)}{q_\gamma}},|B_{\rho}|^{1-\frac{q(b)}{q_\gamma}}}
        \left[\seq{\int_{B_{\rho}(p)}|u|^{q_\gamma}}^{\frac{q(a)}{q_\gamma}}+\seq{\int_{B_\rho(p)}|u|^{q_\gamma}}^{\frac{q(b)}{q_\gamma}}\right]
    \end{eqnarray*}
    for any $u\in\X$, $p\in\R^N$, $\rho>0$ and $\beta\in[a,b]$. The result now follows by integrating both sides of the above inequality over $(0,N)$ together with Theorem \ref{th0}.
\end{proof}

\begin{proposition}\label{weak convergence riesz}
    Suppose $\beta\leq2(\gamma+1).$ Let $\s{u_n}\subset\X$ be a sequence such that $u_n\rightharpoonup u$ weakly for some $u\in\X$. Then $I_{\beta/2}\star|u_n|^{q(\beta)}\rightharpoonup I_{\beta/2}\star|u|^{q(\beta)}$ weakly in $L^2(\R^N\times(0,N);dxd\mu)$.
\end{proposition}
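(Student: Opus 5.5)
We would use the standard two-step pattern for weak convergence in $L^2$: first a uniform bound, then identification of the limit on a dense class of test functions. Set $w_n(x,\beta)=I_{\beta/2}\star|u_n|^{q(\beta)}(x)$ and $w(x,\beta)=I_{\beta/2}\star|u|^{q(\beta)}(x)$, viewed as elements of $L^2(\R^N\times(0,N);dx\,d\mu)$.

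\textbf{Uniform bound.} Since $u_n\rightharpoonup u$ in $\X$, the sequence is bounded in $\X$, so $\normq{u_n}\le M$. By Lemma~\ref{ball property} and \eqref{p1}, $\rho(u_n)\le \max\{M^{2q(a)},M^{2q(b)}\}+1$. This says exactly that $\{w_n\}$ is bounded in $L^2(dx\,d\mu)$. The space $L^2(dx\,d\mu)$ is a Hilbert space, hence reflexive, so every subsequence has a further subsequence converging weakly to some $W$. It therefore suffices to show $W=w$; the whole sequence then converges weakly.

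\textbf{Identifying the limit.} Test against $\varphi(x)\chi(\beta)$ with $\varphi\in C_c^\infty(\R^N)$ and $\chi$ bounded Borel on $[a,b]$. Finite linear combinations of such products are dense in $L^2(dx\,d\mu)$, and $\{w_n\}$ is bounded, so testing on this class identifies $W$. Using the symmetry of the Riesz kernel,
\[
\int_{(0,N)}\chi(\beta)\int_{\R^N} w_n\,\varphi\,dx\,d\mu=\int_{(0,N)}\chi(\beta)\int_{\R^N}|u_n|^{q(\beta)}\,(I_{\beta/2}\star\varphi)\,dx\,d\mu .
\]
Split $\R^N$ into $B_R$ and its complement.

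\emph{On $B_R$.} Proposition~\ref{locomp}, which uses $\beta\le 2(\gamma+1)$, gives $u_n\to u$ in $L^{q(\cdot)}(B_R)$ along a subsequence. The function $I_{\beta/2}\star\varphi$ is bounded uniformly for $\beta\in[a,b]$. Combined with the elementary inequality $\big||s|^q-|t|^q\big|\le q(|s|^{q-1}+|t|^{q-1})|s-t|$, Hölder's inequality in $(x,\beta)$, and the uniform local bounds from the Appendix proposition, the $B_R$ part converges to the corresponding integral for $u$.

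\emph{Outside $B_R$.} Here $|I_{\beta/2}\star\varphi(x)|\le C|x|^{\beta/2-N}$, uniformly for $\beta\in[a,b]$. Bound $\int_{|x|>R}|u_n|^{q(\beta)}|x|^{\beta/2-N}dx$ uniformly in $n$. If $q(\beta)$ is below $q_\gamma$, use Hölder with $L^{q_\gamma}$, which is uniformly bounded by Theorem~\ref{emb}. Otherwise, use a dyadic-annulus decomposition together with Proposition~\ref{den} applied at scale $R$, following the proof of \cite[Proposition 2.3]{MR3568051}. Either way the tail should be $O(R^{-\epsilon})$ uniformly in $n$ and $\beta$.

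\textbf{Main obstacle.} The delicate step is this tail estimate. Proposition~\ref{den} is local and $\beta$-dependent, so the dyadic summation must be carried out with constants uniform over $\beta\in[a,b]$. This uniformity is available because $A_\beta$ is bounded on $[a,b]$ and $\mu$ is supported there.

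If the tail estimate resists, the fallback is a.e.\ identification. Strong $L^{q(\cdot)}_{\rm loc}$ convergence gives, for $\mu$-a.e.\ $\beta$, $u_n\to u$ a.e.\ in $\R^N$ along a subsequence. By Lemma~\ref{a1}, for each $\beta$ outside a set of small $\mu$-measure, $\int_{\R^N}|w_n(\cdot,\beta)|^2dx$ stays bounded along a subsequence. The single-$\beta$ result of \cite{MR3568051} then gives $w_n(\cdot,\beta)\rightharpoonup w(\cdot,\beta)$ in $L^2(\R^N)$. Integrating against $\chi(\beta)$ and using dominated convergence in $\beta$ would conclude, since $|\langle w_n(\cdot,\beta),\varphi\rangle|\le C\|\varphi\|_{L^1\cap L^\infty}\int|u_n|^{q(\beta)}(1+|x|)^{\beta/2-N}dx$, whose $\mu$-integral is uniformly controlled. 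Here the subsequences that Lemma~\ref{a1} produces depend on $\beta$, so the diagonalization must be handled with care.
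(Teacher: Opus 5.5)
Your proof is correct. It takes a different route from the paper mainly in how you control the far field.

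\textbf{The paper's tail estimate.} The paper keeps the convolution on the $u_n$ side. For $\phi$ supported well inside $B_{R/2}$, it compares the kernel $I_{\beta/2}(x-z)$, $|z|\ge R$, with its average over $y\in B_{R/2}$. Cauchy--Schwarz then bounds the tail by $CR^{-N/2}\|I_{\beta/2}\star|u_n|^{q(\beta)}\|_{L^2}$. This is a nonlocal argument in the spirit of \cite{MR3568051} that uses only the bound on $\rho(u_n)$. The local part is handled by Proposition~\ref{locomp}, exactly as you do.

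\textbf{Your tail estimate.} You move the kernel onto $\varphi$, use the decay $|I_{\beta/2}\star\varphi(x)|\le C|x|^{\beta/2-N}$, and apply H\"older against the uniform $L^{q_\gamma}$ bound from Theorem~\ref{emb}. That first branch is all you need. The hypothesis $\beta\le 2(\gamma+1)$ is exactly $q(\beta)\le q_\gamma$ on $[a,b]$. Moreover, the integrability condition $Nq(\beta)>\tfrac{\beta}{2}q_\gamma$ always holds. So the tail is bounded by $CR^{\beta/2-Nq(\beta)/q_\gamma}\le CR^{-\epsilon}$, where $\epsilon=\min_{[a,b]}\bigl(Nq(\beta)/q_\gamma-\beta/2\bigr)>0$, uniformly in $n$ and $\beta$ for $R\ge 1$. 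The equality case $q(\beta)=q_\gamma$ is the trivial bound $R^{\beta/2-N}\|u_n\|_{q_\gamma}^{q_\gamma}$.

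\textbf{What you can drop.} The ``otherwise'' case with dyadic annuli never occurs, so it and the ``main obstacle'' you identify can both be dropped. The fallback through Lemma~\ref{a1} can also go; its $\beta$-dependent subsequences would indeed be hard to diagonalize.

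\textbf{What each approach buys.} Your route is more elementary and gives an explicit rate. The paper's route needs less for the tail, only the Coulomb bound, though the same hypothesis is needed anyway for the local term. Your version is also more complete on the functional-analytic side. You test against $\varphi(x)\chi(\beta)$ and use the $L^2(dx\,d\mu)$ bound to pass from a dense class to all test functions. The paper tests only against $\beta$-independent $\phi$, which by itself does not identify a weak limit in the product space.
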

\begin{proof}
    Let $\phi\in C_c^{\infty}(\R^N)$. Let also $\eta\in C_c^{\infty}(\R^N)$ wuch that $\eta=1$ on $B_1(0)$ and define $\eta_R(x)=\eta(x/R)$. Note that we have, for every $n\in\N$ and $R>0$,
    \begin{eqnarray*}
        \int_{(0,N)}\int_{\R^N}\left[I_{\beta/2}\star\seq{|u_n|^{q(\beta)}-|u|^{q(\beta)}}\right]\phi dxd\mu\nonumber\\
        =\int_{(0,N)}\int_{\R^N} \eta_R\seq{I_{\beta/2}\star\phi}\seq{|u_n|^{q(\beta)}-|u|^{q(\beta)}}dxd\mu\\
        +\int_{(0,N)}\int_{\R^N}\left[I_{\beta/2}\star(1-\eta_R)\seq{|u_n|^{q(\beta)}-|u|^{q(\beta)}}\right]\phi dxd\mu\\
        :=A+B.
    \end{eqnarray*}
    Notice that
    \begin{eqnarray*}
        B=\int_{(0,N)}\int_{\R^N}\left[I_{\beta/2}\star(1-\eta_R)\seq{|u_n|^{q(\beta)}-|u|^{q(\beta)}}\right]\phi dyd\mu,
    \end{eqnarray*}
    and that for every $x,y,z\in\R^N$ with $|x|\geq2\min(|y|,|z|)$ 
    \[
    I_{\beta/2}(x-z)\leq 3^{N-\beta/2}I_{\beta/2}(x-y).
    \]
Now, if $y\in \R^N\backslash{B_R}$, then $|z|\geq2\min(|x|,|y|)$ for all $x\in B_{R/2}$ and $z\in\R^N\backslash{B_R}$, which implies that
\begin{eqnarray*}
    I_{\beta/2}(y-z)\leq \dfrac{3^{N-\beta/2}}{|B_{R/2}|}\int_{B_{R/2}} I_{\beta/2}(y-x)dx
\end{eqnarray*}
for all $y,z\in\R^N\backslash{ B_R}$ and $x\in B_{R/2}$. This implies that
\begin{eqnarray*}
    \left[I_{\beta/2}\star(1-\eta_R)\seq{|u_n|^{q(\beta)}-|u|^{q(\beta)}}\right](y)=\int_{\R^N\backslash B_{R}}I_{\beta/2}(y-z))(1-\eta_R)(z)\seq{|u_n|^{q(\beta)}-|u|^{q(\beta)}(z)}dz\nonumber\\
    \leq \frac{C}{|B_{R/2}|}\int_{\R^N\backslash B_R}\int_{B_{R/2}}I_{\beta/2}(x-z)dx(1-\eta_R)(z)\seq{|u_n|^{q(\beta)}-|u|^{q(\beta)}(z)}dz\\
   = \dfrac{C}{|B_{R/2}|}\int_{B_{R/2}}\left[I_{\beta/2}\star(1-\eta_R)\seq{|u_n|^{q(\beta)}-|u|^{q(\beta)}}\right](x)dx\\
   \leq\dfrac{C}{|B_{R/2}|}\int_{B_{R/2}}I_{\beta/2}\star\left||u_n|^{q(\beta)}-|u|^{q(\beta)}\right|(x)dx,
\end{eqnarray*}
which implies, after employing H\"older inequality, that
\begin{eqnarray*}
     \left[I_{\beta/2}\star(1-\eta_R)\seq{|u_n|^{q(\beta)}-|u|^{q(\beta)}}\right](y)\leq\dfrac{C}{|B_R|^{1/2}}\seq{\int_{\R^N}\left|I_{\beta/2}\star(|u_n|^{q(\beta)}-|u|^{q(\beta)})\right|^2dx}^{1/2},
\end{eqnarray*}
consequently we have
\begin{eqnarray*}
    B\leq\dfrac{C}{R^{N/2}}\seq{\int_{\R^N}\left|I_{\beta/2}\star(|u_n|^{q(\beta)}-|u|^{q(\beta)})\right|^2dx}^{1/2}\int_{\R^N}|\phi|dy.
\end{eqnarray*}
Then
\begin{eqnarray*}
    \int_{(0,N)}\int_{\R^N}\left[I_{\beta/2}\star\seq{|u_n|^{q(\beta)}-|u|^{q(\beta)}}\right]\phi dxd\mu\\\leq\int_{(0,N)}\int_{\R^N} \eta_R\seq{I_{\beta/2}\star\phi}\seq{|u_n|^{q(\beta)}-|u|^{q(\beta)}}dxd\mu+\dfrac{K}{R^{N/2}}.
\end{eqnarray*}
The conclusion now follows by taking first $n\to\infty$, taking into account that $\eta_R(I_{\beta/2}\star\phi)\in C_c^{\infty}(\R^N)$ and Proposition \ref{locomp}, and then taking $R\to\infty$.
\end{proof}

\begin{proposition}\label{weak riesz}
    Assume $\beta\leq2(\gamma+1)$. Let $\s{u_n}\subset\X$ be a sequence such that $u_n\rightharpoonup u$ for some $u\in\X$. Then
    \begin{eqnarray*}
        \int_{(0,N)}\int_{\R^N}\seq{I_{\beta/2}\star|u_n|^{q(\beta)}}\seq{I_{\beta/2}\star|u_n|^{q(\beta)-2}u_nv}dxd\mu\nonumber\\
        \to\int_{(0,N)}\int_{\R^N}\seq{I_{\beta/2}\star|u|^{q(\beta)}}\seq{I_{\beta/2}\star|u|^{q(\beta)-2}uv}dxd\mu
    \end{eqnarray*}
    for all $v\in\X$
\end{proposition}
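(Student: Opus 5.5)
We prove Proposition \ref{weak riesz} by writing the integrand as an $L^2(\R^N\times(0,N);dx\,d\mu)$ pairing of two factors. The first factor converges weakly by Proposition \ref{weak convergence riesz}. We will show that the second factor converges strongly. Set
\[
F_n(x,\beta)=I_{\beta/2}\star|u_n|^{q(\beta)},\qquad G_n(x,\beta)=I_{\beta/2}\star\bigl(|u_n|^{q(\beta)-2}u_nv\bigr),
\]
and let $F,G$ be the analogous quantities for $u$. Since $\{u_n\}$ is bounded in $\X$, the quantity $\rho(u_n)$ is bounded (Lemma \ref{ball property} and Corollary \ref{cor ball}), so $\{F_n\}$ is bounded in $L^2(dx\,d\mu)$. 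Proposition \ref{weak convergence riesz} then gives $F_n\weak F$. Because weak times strong converges, it suffices to prove $G_n\to G$ strongly in $L^2(dx\,d\mu)$.

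\textbf{Pointwise domination in $L^2$.} Since $q(\beta)\geq 2$, Young's inequality gives $|u_n|^{q(\beta)-1}|v|\le \frac{q(\beta)-1}{q(\beta)}|u_n|^{q(\beta)}+\frac1{q(\beta)}|v|^{q(\beta)}$. Positivity of the Riesz kernel then yields
\[
|G_n|\le F_n+I_{\beta/2}\star|v|^{q(\beta)},
\]
so $\{G_n\}$ is bounded in $L^2(dx\,d\mu)$. A sharper device avoids the lack of a uniform dominating function. For $\varepsilon>0$ we write $|u_n|^{q-1}|v|\le \varepsilon|u_n|^q+C_\varepsilon|v|^q$. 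Alternatively, we use the Cauchy--Schwarz inequality for the positive-definite Riesz form,
\[
\int (I_{\beta/2}\star fg)^2\le \Bigl(\int|I_{\beta/2}\star|f|^{q}|^2\Bigr)^{\frac{q-1}{q}}\Bigl(\int|I_{\beta/2}\star|g|^{q}|^2\Bigr)^{\frac1q},
\]
with $f=|u_n|^{q-2}u_n$ and $g=v$ (the nonlocal Hölder inequality of \cite{MR3568051}). This controls tails: if $v$ is replaced by $v\chi_{\R^N\setminus B_R}$ or by $v-\phi$ with $\phi\in C_c^\infty$, the contribution is bounded by $C\,\rho$-type quantities of $v-\phi$ raised to positive powers, uniformly in $n$. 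By density (Proposition \ref{den1} together with Lemma \ref{conver1}) we may therefore assume $v\in C_c^\infty(\R^N)$, supported in some ball $B$.

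\textbf{Compactly supported $v$.} Now $h_n:=|u_n|^{q(\beta)-2}u_nv$ is supported in $B$. By Proposition \ref{locomp}, $u_n\to u$ in $L^{q(\cdot)}_{\rm loc}$, so $h_n\to h$ in $L^{1}$ of $B\times(0,N)$ and in fact in $L^{t(\beta)}(B)$ for suitable exponents. We use the Hardy--Littlewood--Sobolev inequality, $\|I_{\beta/2}\star h\|_{L^2}\le C\|h\|_{L^{2N/(N+\beta)}}$, with a constant uniform for $\beta\in[a,b]$. It then suffices to have $h_n\to h$ in $L^{2N/(N+\beta)}(B)$, uniformly integrable in $\beta$. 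Since $v$ is bounded and $(q(\beta)-1)\cdot\frac{2N}{N+\beta}\le q(\beta)$ (equivalently $q(\beta)\le\frac{2N}{N-\beta}$, which we check from $q(\beta)\le q_\gamma<2^*$ and $\beta\ge a$), this follows from the local strong convergence in $L^{q(\cdot)}$ and Hölder's inequality on $B$. Integrating over $\beta$ with dominated convergence then gives $G_n\to G$ in $L^2(dx\,d\mu)$.

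\textbf{Main obstacle.} The main difficulty is the uniformity in $\beta$ of the HLS exponents and constants, together with the density reduction in $v$. If the exponent inequality fails for some $\beta$, we will instead use local compactness in $L^l$, $l\in(q^\alpha_{\rm rad},2^*)$, from Theorem \ref{th0} to place $u_n$ in a higher local $L^p$ space.
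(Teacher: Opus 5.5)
Your weak--strong pairing route differs from the paper's and can be completed. However, one verification in it is wrong, so the fallback you mention is required rather than optional.

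\textbf{The false step.} The inequality $q(\beta)\le\frac{2N}{N-\beta}$ does not follow from $q(\beta)\le q_\gamma<2^*$ and $\beta\ge a$.
\begin{itemize}
\item For $N=3$ and $\gamma\in(\frac12,1)$ (admissible, and then $2(\gamma-1)\le a$ is automatic), $q(\beta)\to1+\frac1\gamma>2$ while $\frac{2N}{N-\beta}\to2$ as $\beta\to0^+$.
\item Concretely, $\gamma=0.8$ and $\beta=a=0.1$ give $q(a)\approx2.31>\frac{6}{2.9}\approx2.07$. This is even compatible with the hypotheses of Theorem~\ref{th1}, e.g.\ with $b=1.1$.
\item For $N\ge4$ the inequality does hold, since $q(\beta)<\frac{N+\beta}{N-2}\le\frac{2N}{N-\beta}$.
\end{itemize}
So for $N=3$, $L^{q(\cdot)}_{\text{loc}}$ convergence alone does not control $h_n$ in $L^{2N/(N+\beta)}(B)$.

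\textbf{Why the fallback works.} Since $\gamma>\frac{N-2}{2}$, you have $q(\beta)<\frac{N+\beta}{N-2}$. Hence $(q(\beta)-1)\frac{2N}{N+\beta}<\frac{\beta+2}{N+\beta}\,2^*<2^*$. By compactness of $[a,b]$, fix one $l<2^*$ above the maximum of the left-hand side over $[a,b]$. Then use $u_n\to u$ in $L^l(B)$; Rellich on $B$ already suffices. HLS with constants bounded on $[a,b]$, plus bounded convergence in $\beta$, then gives $G_n\to G$ in $L^2(dx\,d\mu)$.

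\textbf{A correction to your nonlocal H\"older inequality.} It needs $|f|^{q/(q-1)}$, not $|f|^q$. With $w=v-\phi$, the correct form is
$\int(I_{\beta/2}\star|u_n|^{q-1}|w|)^2dx\le(\int|I_{\beta/2}\star|u_n|^q|^2dx)^{(q-1)/q}(\int|I_{\beta/2}\star|w|^q|^2dx)^{1/q}$.
Applying Young's inequality in $\mu$, with exponents uniform on $[a,b]$, bounds the $\mu$-integral by $\varepsilon\rho(u_n)+C_\varepsilon\rho(v-\phi)$. This is what makes your density reduction uniform in $n$.

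\textbf{Comparison with the paper.} The paper never proves strong convergence of the second factor. Via the semigroup property it works with the double integral $\int_{(0,N)}\iint|u_n(x)|^{q}|u_n(y)|^{q-2}u_n(y)v(y)I_\beta(x-y)\,dx\,dy\,d\mu$.
\begin{itemize}
\item It uses only a.e.\ convergence $u_n\to u$, coming from the compact embedding into $L^{q_\gamma}$.
\item By Young's inequality it dominates the absolute difference of the integrands by $\varepsilon|u_n(x)|^q|u_n(y)|^q+C_\varepsilon|u_n(x)|^q|v(y)|^q+|u(x)|^q|u(y)|^{q-1}|v(y)|$.
\item It then applies Fatou's lemma to the dominant minus the difference.
\item The $C_\varepsilon$ term equals $\int_{(0,N)}\int(I_{\beta/2}\star|u_n|^q)(I_{\beta/2}\star|v|^q)\,dx\,d\mu$. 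This converges to its limit by Proposition~\ref{weak convergence riesz} and cancels, leaving $\limsup\le C\varepsilon$.
\end{itemize}
That argument needs no HLS with $\beta$-uniform constants, no density in $v$, and no exponent bookkeeping in $\beta$. Your argument yields a stronger intermediate fact, namely strong $L^2(dx\,d\mu)$ convergence of $I_{\beta/2}\star(|u_n|^{q-2}u_nv)$. The price is exactly the bookkeeping where the slip occurred.
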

\begin{proof}
    Notice that it suffices to show that
    \begin{eqnarray*}
        \int_{(0,N)}\int_{\R^{2N}}\dfrac{|u_n(x)|^p|u_n(y)|^{p-2}u_n(y)v(y)}{|x-y|^{N-\beta}}dxdyd\mu\to\int_{(0,N)}\int_{\R^{2N}}\dfrac{|u(x)|^p|u(y)|^{p-2}u(y)v(y)}{|x-y|^{N-\beta}}dxdyd\mu
    \end{eqnarray*}
    by the semigroup property of the Riesz potential together with the fact that the constants $A_{\beta}$ are uniformly bounded from above and from below on $[a,b]$. By Theorem \ref{th0} and Proposition \ref{weak convergence riesz}, we have that
    \begin{eqnarray*}
        u_n\to u,&\quad&\text{a.e. on }\R^N\\
        I_{\beta/2}\star|u_n|^{q(\beta)}\rightharpoonup I_{\beta/2}\star|u|^{q(\beta)}&\quad&\text{in }L^2((0,N)\times\R^N;dxd\mu)
    \end{eqnarray*}
    Set
    \begin{eqnarray*}
        F_n=\left||u_n(x)|^{q(\beta}|u_n(y)|^{q(\beta)-2}u_n(y)v,(y)-|u(x)|^{q(\beta)}|u(y)|^{q(\beta)-2}u(y)v(y)\right|
    \end{eqnarray*}
    and \[
    d\theta=I_{\beta/2}(x-y)dxdyd\mu.
    \]
    Let $0<\varepsilon<1$. Young's inequality gives us
    \begin{eqnarray*}
        F_n=\left||u_n(x)|^{q(\beta)}|u_n(y)|^{q(\beta)-2}u_n(y)v(y)-|u(x)|^{q(\beta)}|u(y)|^{q(\beta)-2}u(y)v(y)\right|\nonumber\\
       \leq \varepsilon^{\frac{q(b)}{q(b)-1}}|u_n(x)|^{q(\beta)}|u_n(y)|^{q(\beta)}+\varepsilon^{-q(b)}|u_n(x)|^{q(\beta)}|v(y)|^{q(\beta)}\nonumber\\
        +\left||u(x)|^{q(\beta)}|u(y)|^{q(\beta)-2}u(y)v(y)\right|=:G_n
    \end{eqnarray*}
    For all $x\in\R^N$ and $\beta\in[a,b]$. Applying Fatou's lemma to $G_n-F_n\geq0$ one gets
    \begin{eqnarray*}
        \varepsilon^{\frac{q(b)}{q(b)-1}}\int_{(0,N)}\int_{\R^{2N}}|u(x)|^{q(\beta)}|u(y)|^{q(\beta)}d\theta+\varepsilon^{-q(b)}\int_{(0,N)}\int_{\R^{2N}}|u(x)|^{q(\beta)}|v(y)|^{q(\beta)}d\theta\\
        \leq\varepsilon^{\frac{q(b)}{q(b)-1}}\liminf_{n\to\infty}\int_{(0,N)}\int_{\R^{2N}}|u_n(x)|^{q(\beta)}|u_n(y)|^{q(\beta)}d\theta-\limsup_{n\to\infty}\int_{(0,N)}\int_{\R^{2N}}F_nd\theta\\
        +\varepsilon^{-q(b)}\lim_{n\to\infty}\int_{(0,N)}\int_{\R^N}(I_{\beta/2}\star|u_n|^{q(\beta)})(I_{\beta/2}\star|v|^{q(\beta)})dxd\mu.
    \end{eqnarray*}
    Noting the cancelation due to the identity
    \begin{eqnarray*}
        \int_{(0,N)}\int_{\R^{2N}}|u(x)|^{q(\beta)}|v(y)|d\theta=\int_{(0,N)}\int_{\R^N}(I_{\beta/2}\star|u|^{q(\beta)})(I_{\beta/2}\star|v|^{q(\beta)})dxd\mu\\
        =\lim_{n\to\infty}\int_{(0,N)}\int_{\R^N}(I_{\beta/2}\star|u_n|^{q(\beta)})(I_{\beta/2}\star|v|^{q(\beta)})dxd\mu.
    \end{eqnarray*}
    and since $\s{u_n}$ is bounded in $\X$, we have that there exists a constant $C>0$ such that
    \[
    \limsup_{n\to\infty}\int_{(0,N)}\int_{\R^{2N}}F_nd\theta\leq C\varepsilon^{\frac{q(b)}{q(b)-1}},
    \]
    and this concludes the proof.
\end{proof}

\end{document}